	\theoremstyle{definition}
	\newtheorem{thm}{Theorem}[section]
	\newtheorem{lem}[thm]{Lemma}
	\newtheorem{prop}[thm]{Proposition}
	\newtheorem{cor}[thm]{Corollary}
	\newtheorem{rmk}{Remark}[section]
	\newcommand{\R}{\mathbb{R}}  
	\newcommand{\Z}{\mathbb{Z}}
	\newcommand{\N}{\mathbb{N}}
	\newcommand{\T}{\mathbb{T}}
	\newcommand{\TT}{\mathcal{T}}
	\newcommand{\TP}{\overline{\p}{}}
	\newcommand{\bd}[1]{\mathbf{#1}}  
	\newcommand{\RR}{\mathcal{R}}      
	\newcommand{\Om}{\Omega}
	\newcommand{\q}{\quad}
	\newcommand{\p}{\partial}
	\newcommand{\dd}{\mathrm{d}}
	\newcommand{\nab}{\nabla}
	\newcommand{\lap}{\Delta}
	\newcommand{\no}{\nonumber}
	\newcommand{\lleq}{\stackrel{L}{=}}
	\newcommand{\dx}{\,\mathrm{d}x}
	\newcommand{\dt}{\,\mathrm{d}t}
	\newcommand{\dtau}{\,\mathrm{d}\tau}
	\newcommand{\lee}{\langle}
	\newcommand{\ree}{\rangle}
	\newcommand{\UU}{\mathbf{U}}
	\newcommand{\BB}{\mathbf{B}}
	\newcommand{\QQ}{\mathbf{Q}}
	\newcommand{\qc}{\check{q}}
	\newcommand{\io}{\int_{\Omega}}
	\newcommand{\ddt}{\frac{\mathrm{d}}{\mathrm{d}t}}
	\numberwithin{equation}{section}
	\newcommand{\eps}{\varepsilon}
	\newcommand{\bp}{(B\cdot\nab)}
	\newcommand{\rbp}{(\rho^{-1}B\cdot\nab)}
\begin{document}
		\title{\bf Uniform Anisotropic Regularity and Low Mach Number Limit of Non-isentropic Ideal MHD Equations \\ with a Perfectly Conducting Boundary}
		\date{\today}
		\author{{\sc Qiangchang JU}\thanks{Institute of Applied Physics and Computational Mathematics, Beijing, P.R. China.
				Email: \texttt{ju\_qiangchang@iapcm.ac.cn}}\,\,\,\,\,\,{\sc Jiawei WANG}\thanks{Hua Loo-Keng Center for Mathematical Sciences, Academy of Mathematics and Systems Science, Chinese Academy of Sciences, Beijing, P.R. China.
				Email: \texttt{wangjiawei@amss.ac.cn}}\,\,\,\,\,\, {\sc Junyan ZHANG}\thanks{Department of Mathematics, National University of Singapore, Singapore. 
				Email: \texttt{zhangjy@nus.edu.sg}} }
		\maketitle
		
		\begin{abstract}
			We prove the low Mach number limit of non-isentropic ideal magnetohydrodynamic (MHD) equations with {\it general initial data} in the half-space whose boundary satisfies the perfectly conducting wall condition. By observing a special structure contributed by Lorentz force in vorticity analysis, we establish uniform estimates in suitable anisotropic Sobolev spaces with weights of Mach number determined by the number of material derivatives. We also observe that the entropy has the enhanced regularity in the direction of the magnetic field. These two observations help us get rid of the loss of derivatives and weights of Mach number in vorticity analysis caused by the simultaneous appearance of entropy, general initial data and the magnetic field, which is one of the major difficulties that do not appear in Euler equations or the isentropic problems. By utilizing the technique of Alinhac good unknowns, the anti-symmetric structure is preserved in the tangential estimates for the system differenetiated by high-order material derivatives.

		\end{abstract}
		
		\noindent \textbf{Keywords}: Compressible ideal MHD, Low Mach number limit, Perfectly conducting wall.
		
		\noindent \textbf{MSC(2020) codes}: 35L60, 35Q35, 76M45, 76W05.
		\setcounter{tocdepth}{1}
		\tableofcontents
		
		\section{Introduction}
		
		In this paper, we consider the scaled non-isentropic compressible ideal magnetohydrodynamic (MHD) equations 
		\begin{equation}
			\begin{cases}
				D_t\rho+\rho(\nabla\cdot u)=0~~~&\text{in}~[0,T]\times\Om, \\
				\rho D_t u=B\cdot\nabla B-\nabla P,~~P:=\varepsilon^{-2}p+\frac{1}{2}|B|^2~~~& \text{in}~[0,T]\times\Om, \\
				D_t B=B\cdot\nabla u-B(\nabla\cdot u)~~~&\text{in}~[0,T]\times\Om, \\
				\nabla\cdot B=0~~~&\text{in}~[0,T]\times\Om,\\
				D_tS=0~~~&\text{in}~[0,T]\times\Om,
			\end{cases}\label{CMHD}
		\end{equation}
		describing the motion of a compressible conducting fluid in an electro-magnetic field. Here we set $\Om=\mathbb{R}^d_+:=\{x\in \mathbb{R}^d: x_d>0\}$ for $d=2,3$ with boundary $\Sigma:=\{x_d=0\}$. $\nabla:=(\p_1,\cdots,\p_d)^\mathrm{T}$ is the standard spatial derivative. $D_t:=\p_t+u\cdot\nabla$ is the material derivative. The fluid velocity, the magnetic field, the fluid density, the fluid pressure and the entropy are denoted by $u=(u_1,\cdots,u_d)^\mathrm{T}$, $B=(B_1,\cdots,B_d)^\mathrm{T}$, $\rho$, $p$ and $S$ respectively. Note that the last equation of \eqref{CMHD} is derived from the equation of total energy and Gibbs relation. Defined as the ratio of characteristic fluid velocity to the sound speed, the Mach number $\eps$ is a {\it dimensionless} parameter that measures the compressibility of the fluid. Throughout this manuscript, Einstein's summation convention is adopted and repeated indices range from 1 to $d$.
		
		We assume the fluid density $\rho=\rho(p,S)>0$ to be a given smooth function of $p$ and $S$ which satisfies 
		\begin{align}
			\rho\geq \bar{\rho_0}>0,~~~\frac{\p \rho}{\p p}> 0, \q \text{in}\,\, \bar{\Om},
			\label{EoS}
		\end{align}for some positive constant $\bar{\rho_0}$. For instance, we have ideal fluids $\rho(p,S)=p^{1/\gamma}e^{-S/\gamma}$ with $\gamma>1$ for a polytropic gas. These two conditions also guarantee the hyperbolicity of system \eqref{CMHD}.

		The initial and boundary conditions of system \eqref{CMHD} are
		\begin{align}
			\label{data}	    (p,u,B,S)|_{t=0}=(p_0,u_0, B_0,S_0) ~~~& \text{in }[0,T]\times\Om,\\
			\label{bdry cond}			u_d=0,~~B_d=0 ~~~& \text{on }[0,T]\times\Sigma,
		\end{align} where the boundary condition for $u_d$ is the slip boundary condition, and the boundary condition for $B_d$ shows that $\Sigma$ is a perfectly conducting wall. The meaning of the second condition is that the plasma is closed off from the outside world by a perfectly conducting wall. As stated in \cite[Chapter 4.6]{MHDphy}, this model is appropriate for the study of equilibrium, waves and instabilities of confined plasma as used in thermonuclear research and is also served as the simplest, the most relevant model to describe confined plasmas. Such configurations refer to tokamaks.

		\begin{rmk} The conditions $\nabla\cdot B=0$ in $\Om$ and $B_d=0$ on $\Sigma$ are both constraints for initial data so that the MHD system is not over-determined. One can show that they propagate within the lifespan of the solution. Using the theory of hyperbolic system with charateristic boundary conditions \cite{Rauch1985}, one can show that the correct number of boundary conditions is 1. So, $B_d|_{\Sigma}=0$ has to be an initial constraint.
		\end{rmk}
		
		To make the initial-boundary-value problem \eqref{CMHD}-\eqref{bdry cond} solvable, we need to require the initial data satisfying the compatibility conditions up to certain order. For $m\in\N$, we define the $m$-th order compatibility conditions to be
		\begin{equation}\label{comp cond}
		B_d|_{t=0}=0\text{ and }\p_t^j u_d|_{t=0}=0\q\text{ on }\Sigma,~~0\leq j\leq m.
		\end{equation}It should be noted that \eqref{comp cond} indicates $\p_t^j B_d|_{t=0}$ on $\Sigma$ for $0\leq j\leq m$ and we refer to Trakhinin \cite[Section 4.1]{Trakhinin2008CMHDVS} for the proof.

		Let $a:=\frac{1}{\rho}\frac{\p\rho}{\p p}$. Since $\frac{\p \rho}{\p p}>0$ implies $a(p,S)>0$, in view of $D_t S=0$, the first equation of \eqref{CMHD} is equivalent to 
		\begin{equation} \label{continuity eq f}
			a D_t p  +\nab\cdot u=0. 
		\end{equation}
		Thus the compressible MHD system is now reformulated as follows
		\begin{equation}\label{CMHD2}
			\begin{cases}
				a D_t p  +\nab\cdot u=0~~~&\text{in}~[0,T]\times\Om, \\
				\rho D_t u=B\cdot\nabla B-\nabla P,~~P:=\eps^{-2}p+\frac{1}{2}|B|^2~~~& \text{in}~[0,T]\times\Om, \\
				D_t B=B\cdot\nabla u-B(\nabla\cdot u)~~~&\text{in}~[0,T]\times\Om, \\
				\nabla\cdot B=0~~~&\text{in}~[0,T]\times\Om,\\
				D_t S=0~~~&\text{in}~[0,T]\times\Om,\\
				a=a(p,S)>0,\quad\rho=\rho(p, S)>0~~~&\text{in}~[0,T]\times\bar{\Om}.\\
				u_d=B_d=0 ~~~& \text{on}~[0,T]\times\Sigma,\\
				(p,u,B,S)|_{t=0}=(p_0,u_0, B_0, S_0)~~~& \text{on}~\{t=0\}\times\Om.
			\end{cases}
		\end{equation}

		When considering the incompressible limit, that is, when $\eps>0$ is sufficiently small, it is more convenient to symmetrize the compressible MHD system by using the transformation $p=1+\eps q$, yielding
		\begin{equation}\label{CMHD3}
			\begin{cases}
				a D_t q  + \eps^{-1} \nab\cdot u=0~~~&\text{in}~[0,T]\times\Om, \\
				\rho D_t u+\eps^{-1}\nab q+\frac{1}{2}\nab|B|^2-B\cdot\nabla B=0~~~& \text{in}~[0,T]\times\Om, \\
				D_t B=B\cdot\nabla u-B(\nabla\cdot u)~~~&\text{in}~[0,T]\times\Om, \\
				\nabla\cdot B=0~~~&\text{in}~[0,T]\times\Om,\\
				D_t S=0~~~&\text{in}~[0,T]\times\Om,\\
				a=a(\eps q,S)>0,\quad \rho=\rho(\eps q,S)>0~~~&\text{in}~[0,T]\times\bar{\Om}.\\
				u_d=B_d=0 ~~~& \text{on}~[0,T]\times\Sigma,\\
				(q,u,B,S)|_{t=0}=(q_0,u_0, B_0, S_0)~~~& \text{on}~\{t=0\}\times\Om.\\
			\end{cases}
		\end{equation}

		\subsection{An overview of previous results}\label{sect history}
		As is well-known in fluid dynamics, see for example in Majda's book \cite[Chapter 2.4]{Majdalimit}, we can formally derive the incompressible fluid equations from the compressible ones, which corresponds to passing to the limit in certain dimensionless form as the Mach number goes to zero. In particular, for inviscid fluids, taking low Mach number limit is a singular limit process of a hyperbolic system with large parameter, for example the coefficients $\eps^{-1}$ in \eqref{CMHD3}. To study such singular limit, we shall classify the initial data to be two types.
		\begin{itemize}
		\item Well-prepared initial data: $\nab \cdot u_0 = O(\eps^k),~\nab q_0 = O(\eps^k)$ for $k\geq 1$. In such case, the compressible data is exactly a slight perturbation of a given incompressible data. Uniform estimates in low Mach regime immedately give the strong convergence thanks to the uniform boundedness of first-order time derivatives.
		\item General initial data: $\nab\cdot u_0 = O(1),~\nab q_0 = O(1)$. In such case, the compressible data includes a large perturbation which is actually a highly oscillatory acoustic wave that propagates with a speed of $O(1/\eps)$.  One has to filter such acoustic wave and find suitable function spaces for the {\it strong} convergence.
		\end{itemize}
		The low Mach number limit of Euler equations in $\R^d,\T^d$ or a fixed domain has been studied extensively. 
		\begin{center}
			\begin{tabular}{|c|c|c|}
			\hline Initial data & Isentropic Euler & Non-isentropic Euler \\
			\hline \begin{tabular}{c} 
			Well-prepared 
			\end{tabular} & \begin{tabular}{c} 
			Klainerman-Majda \cite{Klainerman1981limit,Klainerman1982limit} \\
			Ebin \cite{Ebin1982limit} 
			\end{tabular} & Schochet \cite{Schochet1986limit, Schochet1987limit} \\
			\hline General & \begin{tabular}{c} 
			Ukai \cite{Ukai1986limit}, Asano \cite{Asano1987limit}, Isozaki \cite{Isozaki1987limit}, Iguchi \cite{Iguchi1997limit}, \\
			Schochet \cite{Schochet1994limit}, Secchi \cite{Secchi2000}
			\end{tabular} & \begin{tabular}{c} 
			M\'etivier-Schochet \cite{Metivier2001limit, Metivier2003limit} \\
			Alazard \cite{Alazard2005limit}
			\end{tabular} \\
			\hline
			\end{tabular}
			~\\
			~\\
			Table 1: The existing literatures for the low Mach number limit of Euler equations.
		\end{center}	
		
		The study of singular limits in MHD are much less developed than that of Euler equations due to the strong coupling among the fluids, sound waves and magnetic fields, and there are still many unsolved problems since Majda raised open problems in this area in \cite[pp. 71-72]{Majdalimit}. For ideal MHD in the whole space, the study of low Mach number limit was established by Jiang, the first author and Li \cite{JJLMHDlimit2} and we also refer to Cheng, the first author and Schochet \cite{CJSAlfvenlimit1, CJSAlfvenlimit2} for the multi-scale singular limits in low Mach number and Alfv\'en number regime. 
		
		However, the singular limits of ideal MHD system in a domain with boundaries become much more subtle. Under the perfectly conducting wall condition ($B\cdot N=0$ on the boundary), Ohno-Shirota \cite{OS1998MHDill} proved that the linearized problem near a non-zero magnetic field is ill-posed in standard Sobolev spaces. The well-posedness results are referred to Yanagisawa-Matsumura \cite{1991MHDfirst} and Secchi \cite{Secchi1995, Secchi1995-2, Secchi1996, Secchi2002} under the setting of anisotropic Sobolev spaces, which were first introduced by Chen \cite{ChenSX}. In contrast, the corresponding incompressible problem is still well-posed in standard Sobolev spaces, see for example \cite{GuWang2016LWP}. The low Mach number limit for this problem was first proved by the first and the second authors \cite{JuWangMHDlimit} in the case of isentropic general data and non-isentropic well-prepared data. In \cite{JuWangMHDlimit}, a suitable closed subspace of standard Sobolev space, introduced by Secchi \cite{Secchi1995-2}, is used, but more restrictive constraints for the boundary value of initial data are required
		\[
			\p_3^{2k}(u_3,B_3)|_{\Sigma}=0,~\p_3^{2k+1}(p,u_1,u_2,B_1,B_2,S)|_{\Sigma}=0,~~k=0,1,\cdots,\lfloor\frac{m-1}{2}\rfloor,
		\]whose physical interpretation is still unclear. We also refer to a very recent result by Secchi \cite{Secchi2024} in which he proved the incompressible limit for the isentropic problem with general initial data by using another anisotropic Sobolev spaces defined in \cite{Secchi2002}. By observing a key structure, the second and the third authors \cite{WZ2023CMHDlimit} prove the incompressible limit for the non-isentropic problem with well-prepared initial data, and the function spaces that we used are exactly the same as the ones for their well-posedness, that is, the anisotropic Sobolev norms converge to an energy functional defined in standard Sobolev spaces. The framework of \cite{WZ2023CMHDlimit} is also generalized to the study of free-boundary problems \cite{Zhang2023CMHD1, Zhang2023CMHD2} by the third author. But {\it none of the existing works applies to the case of non-isentropic problems with general initial data.}
		
		We also remark that the direction of the magnetic field is crucially important for the study of ideal MHD in a domain with boundaries even if one only studies the local existence. For example, when the magnetic field is not tangential to the boundary, one can use the transversality of the magnetic field to compensate the loss of normal derivative and we refer to \cite{1987MHDfirst} for the local existence in standard Sobolev spaces. See also the results about the singular limits in \cite{JuMHD2} by the first author, Schochet, Xu and \cite{JuMHD1} by Jiang, the first author and Xu.
		
		The aim of this paper is to rigorously justify the singular limit  in low Mach number regime for ideal MHD with the perfectly conducting wall condition {\it in the non-isentropic case with general initial data}. The framework in this paper can cover the existing works and does not require extra boundary constraints as in \cite{JuWangMHDlimit}. The proof is based on the combination of several key observations and techniques: a special structure in vorticity analysis that illustrates the ``mismatch" between the anisotropic norms and the standard Sobolev norms, enhanced regularity of the entropy in the direction of the magnetic field, the usage of material derivative $D_t$ instead of $\p_t$ when defining the energy functional and the application of ``modified Alinhac good unknowns" to overcome the difficulty brought by the anisotropy of the function spaces.

		\subsection{Anisotropic Sobolev spaces}	
			Before stating our results, we should first define the anisotropic Sobolev space $H_*^m(\Omega)$ for $m\in\N$ and $\Om=\mathbb{R}^d_+=\{x\in\mathbb{R}^d: x_d>0 \}$, which was first introduced by Shuxing Chen \cite{ChenSX}. Let $\omega=\omega(x_d)$ be a cutoff function\footnote{The choice of $\omega(x_d)$ is not unique. We just need $\omega(x_d)$ vanishes on $\Sigma$,  comparable to the distance function near $\Sigma$ and comparable to 1 far away from $\Sigma$.} on $[0,+\infty)$ defined by $\omega(x_d)=\frac{x_d}{1+x_d}$. Then we define $H_*^m(\Omega)$ for $m\in\N^*$ as follows
			\[
			H_*^m(\Omega):=\left\{f\in L^2(\Omega)\bigg| (\omega\p_d)^{\alpha_{d+1}}\p_1^{\alpha_1}\cdots\p_d^{\alpha_d} f\in L^2(\Omega),~~\forall \alpha \text{ with } \sum_{j=1}^{d-1}\alpha_j +2\alpha_d+\alpha_{d+1}\leq m\right\},
			\]equipped with the norm
			\begin{equation}\label{anisotropic1}
				\|f\|_{H_*^m(\Omega)}^2:=\sum_{\sum_{j=1}^{d-1}\alpha_j +2\alpha_d+\alpha_{d+1}\leq m}\|(\omega\p_d)^{\alpha_{d+1}}\p_1^{\alpha_1}\cdots\p_d^{\alpha_d} f\|_{L^2(\Omega)}^2.
			\end{equation} For any multi-index $\alpha:=(\alpha_0,\alpha_1,\cdots,\alpha_{d},\alpha_{d+1})\in\N^{d+2}$, we define
			\[
			\p_*^\alpha:=\p_t^{\alpha_0}(\omega\p_3)^{\alpha_{d+1}}\p_1^{\alpha_1}\cdots\p_d^{\alpha_d},~~\lee \alpha\ree:=\sum_{j=0}^{d-1}\alpha_j +2\alpha_d+\alpha_{d+1},
			\]and define the \textbf{space-time anisotropic Sobolev norm} $\|\cdot\|_{m,*}$ to be
			\begin{equation}\label{anisotropic2}
				\|f\|_{m,*}^2:=\sum_{\lee\alpha\ree\leq m}\|\p_*^\alpha f\|_{L^2(\Omega)}^2=\sum_{\alpha_0\leq m}\|\p_t^{\alpha_0}f\|_{H_*^{m-\alpha_0}(\Omega)}^2.
			\end{equation}
			
			We also denote the interior Sobolev norm to be $\|f\|_{s}:= \|f(t,\cdot)\|_{H^s(\Omega)}$ for any function $f(t,x)\text{ on }[0,T]\times\Omega$ and denote the boundary Sobolev norm to be $|f|_{s}:= |f(t,\cdot)|_{H^s(\Sigma)}$ for any function $f(t,x)\text{ on }[0,T]\times\Sigma$. 
			
			From now on, we assume the dimension to be $d=3$, that is, $\Om=\mathbb{R}^3_+=\{x\in\mathbb{R}^3:x_3>0\}$ and $\Sigma=\{x_3=0\}$. The 2D case follow in the same manner as the 3D case up to a slight modification in vorticity analysis and we refer the details to \cite[Section 3.5]{WZ2023CMHDlimit} and no longer discuss it in this paper. 
		
		\subsection{Main results}	
		Now, we establish a local-in-time estimate that is uniform in Mach number $\eps$ for general initial data. 
		\begin{thm}[\textbf{Uniform-in-$\eps$ estimate}]\label{main thm, ill data}
			Let $\eps>0$ be fixed. Let $(q_0, u_0, B_0,S_0)\in H^8(\Om)\times H^8(\Om) \times H^8(\Om)\times H^9(\Om)$ be the initial data of \eqref{CMHD3} satisfying the compatibility conditions \eqref{comp cond} up to 7-th order and
			\begin{equation}
				E(0)\le M
			\end{equation}
			for some $M>0$ independent of $\eps$. Then there exists $T>0$ depending only on $M$, such that the solution to \eqref{CMHD3} satisfies the energy estimate
			\begin{equation}
				\sup_{t\in[0,T]}E(t) \le P(E(0)),
			\end{equation}
			where $P(\cdots)$ is a generic polynomial in its arguments, and the energy $E(t)$ is defined to be
			\begin{equation}
				\begin{aligned}\label{energy intro}
					E(t)=&~\sum_{l=0}^{4}E_{4+l}(t),\\
					E_{4+l}(t)=&\sum_{k=0}^{4-l}\left\| (\eps D_t)^{k+2l} (q,u,B,S,\rho^{-1}B\cdot\nab S)\right\|_{4-k-l}^2.
				\end{aligned}
			\end{equation}
			
		\end{thm}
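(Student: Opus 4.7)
The plan is to close the uniform-in-$\eps$ bound on $E(t)=\sum_{l=0}^4 E_{4+l}(t)$ by a simultaneous energy argument that combines tangential estimates for the symmetric hyperbolic structure of \eqref{CMHD3} with a Hodge-type (div-curl) decomposition of normal derivatives, exploiting the two structural observations highlighted in the introduction. Because the energy only weights time derivatives through the material derivative $D_t$, every commutator $[D_t,\p_j]$ produces just one extra $\nab u$ factor and stays uniformly bounded on the time interval we seek; this is the reason for choosing $D_t$ rather than $\p_t$ in the definition of $E$. A standard anisotropic div-curl estimate then reduces the task to controlling: (i) $(\eps D_t)^{k+2l}\p^\beta(q,u,B,S)$ in $L^2$ for tangential $\p^\beta$ with $|\beta|\le 4-k-l$, and (ii) $\text{curl}\,u$, $\text{curl}\,B$, and $\nab\cdot u$ at one less order.

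For the tangential estimates, I would apply $(\eps D_t)^{k+2l}\p^\beta$ to the symmetrized system \eqref{CMHD3}, and multiply by the corresponding test functions, so the singular $\eps^{-1}(\nab q,\nab\cdot u)$-pair integrates out by anti-symmetry. The commutator between $(\eps D_t)^{k+2l}$ and the singular operator is not trivially anti-symmetric, however: to preserve the cancellation I would introduce \emph{modified Alinhac good unknowns} for $(\eps D_t)^{k+2l}(q,u,B)$, namely variables obtained by subtracting off the precise commutator between $D_t$ and the spatial derivatives determined by the transport of the Eulerian coordinates. The characteristic boundary conditions $u_d=B_d=0$ on $\Sigma$, together with the anisotropic weight $2\alpha_d$ on $\p_3$, guarantee that the boundary integrals produced by integration by parts either vanish or are of strictly lower weight and can be absorbed.

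The main obstacle is the vorticity estimate, where the naive identity
\[
\rho D_t\text{curl}\,u = \text{curl}(B\cdot\nab B)+\frac{1}{\rho}\nab\rho\times\nab P+\text{l.o.t.}
\]
produces the term $\rho_S\,\nab S\times(\eps^{-2}\nab q+\tfrac12\nab|B|^2)$, which would cost both an $\eps$-power and a derivative if $\nab S$ were estimated only in anisotropic spaces. The cure is the identity $D_t(\rho^{-1}B\cdot\nab S)=0$, obtained by differentiating $D_tS=0$ along the magnetic transport equation for $B$: it says that $\rho^{-1}B\cdot\nab S$ is transported, hence propagates a full $H^{4-k-l}$ norm. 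Splitting $\nab S$ into its component along $B$ plus a residue, the $B$-directional piece pairs with the Lorentz contribution $\text{curl}(B\cdot\nab B)$ to produce a good $B\cdot\nab$-structure whose commutator with $(\eps D_t)^{k+2l}$ is controllable, while the residue is lower order. This is precisely the structural cancellation between Lorentz force and entropy-generated vorticity that makes the non-isentropic MHD case tractable with general initial data.

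To conclude, I would arrange the estimates in an induction on $l$: at each level $l$, the top-order tangential estimate controls $E_{4+l}$ modulo the $\text{curl}\,u$ contribution, which is handled by the above vorticity identity together with the transport estimate for $\rho^{-1}B\cdot\nab S$ and the magnetic transport equation for $B$. The normal derivatives of $u$ and $B$ are recovered from $\nab\cdot u=-aD_tq$, from $\text{curl}$, and from the equation $D_tB=(B\cdot\nab)u-B(\nab\cdot u)$ together with $\nab\cdot B=0$. Collecting all levels and invoking Gronwall's inequality on $\sum_l E_{4+l}$ produces the stated bound $E(t)\le P(E(0))$ on a time interval depending only on $M$. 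The delicate point throughout is verifying that every $\eps$-power in every commutator is consumed either by a $D_t$ that can be traded for spatial derivatives via the equations, or by the $B$-directional regularity of $S$; this bookkeeping is where the modified good unknowns and the enhanced entropy regularity must be used in tandem.
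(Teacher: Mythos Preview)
Your outline captures several correct ingredients (material derivatives, modified Alinhac good unknowns for the top-order tangential estimate, the transport of $\rho^{-1}B\cdot\nab S$, div--curl reduction, Gr\"onwall), but the treatment of the vorticity is where the argument would fail as written.

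First, the singular term you identify, $\rho_S\,\nab S\times\eps^{-1}\nab q$ (not $\eps^{-2}$), is not cured by decomposing $\nab S$ along $B$. The components of $\nab S$ transverse to $B$ have no enhanced regularity whatsoever, so your ``residue'' is not lower order. The paper instead follows M\'etivier--Schochet and replaces $\rho$ by $\rho_0:=\rho(0,S)$ in the momentum equation, so that $D_t(\rho_0 u)-\rho_0\rho^{-1}B\cdot\nab B$ equals a curl-free gradient plus a term with coefficient $(\rho-\rho_0)/\rho=O(\eps)$. Taking curl then kills the singular piece outright. The enhanced regularity of $\rho^{-1}B\cdot\nab S$ is indeed required, but for a different reason: after the $\rho_0$-substitution, expanding $\nab\times(\rho_0\rho^{-1}B\cdot\nab B)$ and the analogous $u$-term produces factors $(\rho^{-1}B\cdot\nab)\rho_0=\rho_0'(S)\,\rho^{-1}B\cdot\nab S$ at top order, and it is these that need the extra derivative.

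Second, and more seriously, you omit the mechanism that actually forces the anisotropic hierarchy $E_{4+l}$. In the curl estimate for $\nab\times(\rho_0 B)$ one meets the fifth-order term
\[
\mathcal{K}=\int_\Om \p^3\nab\times(\rho_0 B)\cdot \p^3\nab\times(\rho_0\rho^{-1}B\,\nab\cdot u)\,dx,
\]
which cannot be bounded by $E_4$ alone. The paper inserts $\nab\cdot u=-\eps aD_tq$ and then $-\nab q=\eps(\rho D_tu+B\times(\nab\times B))$ to rewrite the offending factor as $\eps^2\rho_0 a\,B\times(\p^3D_t^2u)$ plus an energy term $-\tfrac12\frac{d}{dt}\int a\rho^{-1}|\eps B\times\p^3\nab\times(\rho_0 B)|^2$. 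This is the ``trade one normal derivative for $(\eps D_t)^2$'' structure that couples $E_{4+l}$ to $E_{4+l+1}$ and is the reason the energy has the staircase form $\|(\eps D_t)^{k+2l}(\cdot)\|_{4-k-l}$. Without this observation the vorticity estimate does not close, and your induction on $l$ cannot even start: the curl bound at level $l$ genuinely requires $E_{4+l+1}$ on the right-hand side, so all levels must be closed simultaneously, not inductively.
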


			\begin{rmk}[Relations with anisotropic Sobolev space]
				The energy functional $E(t)$ above is considered as a variant of $\|\cdot\|_{8,*}$ norm at time $t>0$. In fact, the slip boundary condition implies that $D_t$ must be a tangential derivative on the boundary $\Sigma$, and so the ``anisotropic order" of $\|D_t^{k+2l} f\|_{4-k-l}$ is at most $(k+2l)+2\times(4-k-l)=8-k\leq 8$. The Mach number weights are determined according to the number of material derivatives such that the energy estimate for the above ``modified $\|\cdot\|_{8,*}$ norm" is uniform in $\eps>0$. In particular, the first-order time derivatives of $u, q$ are not bounded uniformly in $\eps$, which corresponds to the case of general initial data.
			\end{rmk}	

			\begin{rmk}[Well-posedness and regularity]
				For the initial data given in Theorem \ref{main thm, ill data}, the local well-posedness of \eqref{CMHD3} in $H_*^8(\Om)$ for a fixed $\eps>0$ has been proven in \cite[Theorem 2.1']{Secchi1996} or \cite[Theorem 1.1]{WZ2023CMHDlimit}, where the latter one also gives the uniform-in-$\eps$ estimates for well-prepared initial data. We assume the initial data belong to $H^8(\Om)$ instead of $H_*^8(\Om)$ only because we must guarantee the initial energy $E(0)<\infty$. There is no loss of regularity in the energy estimates. We choose $H_*^8(\Om)\hookrightarrow H^4(\Om)$ (instead of $H^3(\Om)$ as in Euler equations or elastodynamics \cite{WZ2024elasto}) because there are lots of terms in vorticity analysis of compressible ideal MHD that requires the $L^\infty$ boundedness of the second-order derivatives.
			\end{rmk}
			
			\begin{rmk}[Enhanced regularity of the entropy]
			We assume $S_0\in H^9(\Om)$ (actually $H_*^9(\Om)$) because we need the $H_*^8(\Om)$-regularity of the directional derivative $\rbp S$ to control the vorticity instead of the full $H_*^9(\Om)$ regularity. In fact, this enhanced ``directional" regularity of $S$ can be propagated from its initial data as we observe that $D_t$ commutes with $\rbp$. We refer to Section \ref{sect entropy} for the proof. 
			\end{rmk}
			
		The next main theorem concerns the low Mach number limit. We consider the inhomogeneous MHD equations together with a transport equation satisfied by $(u^0,B^0,\pi,S^0)$:
		\begin{equation} \label{IMHD}
			\begin{cases}
				\varrho(\p_t u^0 + u^0\cdot\nab u^0) -B^0\cdot\nab B^0+ \nab (\pi+\frac12|B^0|^2) =0&~~~ \text{in}~[0,T]\times \Omega,\\
				\p_t B^0+ u^0\cdot\nab B^0 -B^0\cdot\nab u^0=0 &~~~ \text{in}~[0,T]\times \Omega,\\
				\p_tS^0+u^0\cdot\nab S^0=0&~~~ \text{in}~[0,T]\times \Omega,\\
				\nab\cdot u^0=\nab\cdot B^0=0&~~~ \text{in}~[0,T]\times \Omega,\\
				u_3^0=B_3^0=0&~~~\text{on}~[0,T]\times\Sigma.
			\end{cases}
		\end{equation}

		\begin{thm}[\textbf{The low Mach number limit}] \label{main thm, limit}
			Under the hypothesis of Theorem \ref{main thm, ill data}, we assume that $(u_0,B_0,S_0) \to (u_0^0,B_0^0,S_0^0)$ in $H^4(\Omega)$ as $\eps\to 0$ with $\nab\cdot B_0^0=0$ in $\Om$ and $u_{0d}^0=B_{0d}^0=0$ on $\Sigma$, and that there exist positive constants $N_0$ and $\sigma$ such that $S_0$ satisfies
			\begin{equation}\label{EntropyDecay}
				|S_0(x)| \le N_0 |x|^{-1-\sigma},\quad |\nabla S_0(x)|\le N_0|x|^{-2-\sigma}.
			\end{equation}
			Then it holds that 
			\begin{align*}
				(q,u,B,S) \to& (0,u^0,B^0,S^0) \quad \text{weakly-* in } L^{\infty}([0,T];H^4(\Om)) \text{ and strongly in } L^2([0,T];H^{4-\delta}_{\mathrm{loc}}(\Om))
			\end{align*}
			for $\delta>0$. $(u^0,B^0,S^0)\in C([0,T];H^4(\Om))$ solves \eqref{IMHD} with initial data $(u^0,B^0,S^0)|_{t=0}=(w_0,B^0_0,S_0^0)$, that is, the incompressible MHD system together with a transport equation of $S^0$, where $w_0\in H^4(\Omega)$ is determined by
			\begin{equation}
				w_{0d}|_{\Sigma}=0,\quad \nabla\cdot w_0=0,\quad \nabla\times (\rho(0,S^0_0)w_0 )=\nabla\times(\rho(0,S_0^0)u_0^0).
			\end{equation}
			Here $\varrho$ satisfies the transport equation $$\p_t\varrho+u^0\cdot\nab \varrho =0,~~\varrho|_{t=0}=\rho(0,S_0^0).$$ 
			The function $\pi$ satisfying $\nab\pi\in C([0,T];H^3(\Om))$ represents the fluid pressure for incompressible MHD system \eqref{IMHD}. In the case of $d=2$, $\nabla\times (\rho(0,S^0_0)w_0 )=\nabla\times(\rho(0,S_0^0)u_0^0)$ should be replaced by $\nabla^\perp\cdot (\rho(0,S^0_0)w_0 )=\nabla^\perp\cdot(\rho(0,S_0^0)u_0^0)$ with $\nab^\perp=(-\p_2, \p_1)^\mathrm{T}.$
		\end{thm}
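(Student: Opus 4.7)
The plan is to combine the uniform estimate of Theorem~\ref{main thm, ill data} with the M\'etivier–Schochet framework for singular limits with general data (in its non-isentropic version due to Alazard), adapted to the half-space with a perfectly conducting boundary. Theorem~\ref{main thm, ill data} already supplies uniform compactness in $L^\infty([0,T];H^4(\Om))$; the real analytic work is to upgrade weak-$*$ convergence to strong convergence for the acoustic part of $(q,u)$, which a priori oscillates at frequency $O(1/\eps)$.

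First, I would extract weak limits. Uniform boundedness of $(q,u,B,S)$ in $L^\infty([0,T];H^4(\Om))$ furnishes a subsequence converging weakly-$*$ to some $(q^*,u^*,B^*,S^*)$. Multiplying the first two equations of \eqref{CMHD3} by $\eps$ and passing to distributional limits gives $\nab\cdot u^* = 0$ and $\nab q^* = 0$, so $q^*$ is spatially constant; since $\eps q$ represents a perturbation of the background pressure and the initial data tend to zero, the limit is $q^*\equiv 0$. Second, I would obtain strong convergence of the non-acoustic blocks. The equations for $B$, $S$, and for the Leray projection $\mathbb{P} u$ (taken with respect to the slip condition $u_3|_\Sigma = 0$) contain no $\eps^{-1}$ coefficient, so the uniform bound on $\eps D_t$-derivatives, together with a commutator argument, yields $\p_t(B,S,\mathbb{P}u)$ bounded in $L^\infty_t H^3$. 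Aubin–Lions then gives strong convergence of $(B,S,\mathbb{P}u)$ in $C([0,T]; H^{4-\delta}_{\mathrm{loc}}(\Om))$, which is enough to pass to the limit in every nonlinear term in which these variables appear.

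Third, and this is the main obstacle, I would handle the acoustic component. Decomposing $u = \mathbb{P}u + \nab\psi$, combining the continuity and momentum equations, and eliminating $\nab\psi$ produces, modulo strongly compact remainders, an acoustic wave equation of the form
\[
	\eps^{2}\,a\rho\,D_t^{2} q \;-\; \Delta q \;=\; O(\eps^{2}),
\]
whose squared sound speed $(a\rho)^{-1}$ depends on the entropy $S$ via the equation of state. The decay hypothesis $|S_0|,|\nab S_0|\lesssim |x|^{-1-\sigma}$ is propagated along the flow, so $a\rho$ tends to a constant at spatial infinity. Extending by even reflection across $\Sigma$ (the slip condition $u_3|_\Sigma=0$ together with the Neumann-type condition on $q$ obtained from the normal component of the momentum equation on $\Sigma$ makes this extension $H^1$-admissible), the problem reduces to a perturbed constant-coefficient wave equation on $\R^{3}$ with a spatially decaying coefficient perturbation. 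The local energy decay / RAGE-type argument of M\'etivier–Schochet then forces $(q,\nab\psi) \to 0$ strongly in $L^{2}([0,T];L^{2}_{\mathrm{loc}}(\Om))$, which upgrades to $L^{2}([0,T];H^{4-\delta}_{\mathrm{loc}}(\Om))$ by interpolation with the uniform $L^\infty_t H^4$ bound.

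Finally, I would collect the three pieces and pass to the limit in \eqref{CMHD3} to recover \eqref{IMHD}, with the pressure $\pi$ reconstructed from the incompressibility constraint as the solution of a Neumann problem. The initial velocity of the limit is $w_0$ rather than $u_0^0$ itself because the divergent part of $u_0$ is an acoustic wave packet that is instantaneously radiated to infinity as $\eps\to 0$; only the $\rho(0,S_0^0)$-weighted curl survives, which yields exactly the characterization of $w_0$ in the statement. The hard part will be the dispersive estimate in Step~3: one must verify that the Lorentz coupling $\tfrac{1}{2}\nab|B|^{2}-B\cdot\nab B$ is a strongly compact perturbation (which it is, by Step~2, since $B$ is strongly convergent), and that the half-space geometry under the perfectly conducting wall condition is genuinely compatible with local energy decay for the acoustic operator. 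Both are true, but making the reflection step rigorous in the variable-coefficient regime demands a careful, M\'etivier–Schochet-style, microlocal or direct energy-splitting analysis.
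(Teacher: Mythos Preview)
Your overall strategy is correct and close to the paper's: extract weak limits, obtain strong convergence for the ``non-acoustic'' variables by Aubin--Lions, then kill the oscillating acoustic component $(q,\nabla\cdot u)$ by a M\'etivier--Schochet/Alazard-type local-energy-decay argument exploiting the entropy decay hypothesis. Two points deserve comment.

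\textbf{A genuine gap in Step~2.} You claim that $\p_t(\mathbb{P}u)$ is uniformly bounded because ``the equations for \dots\ the Leray projection $\mathbb{P}u$ \dots\ contain no $\eps^{-1}$ coefficient''. This is false in the non-isentropic setting. From the momentum equation, $D_t u = -\eps^{-1}\rho^{-1}\nabla\check q + \text{bounded}$, and since $\rho^{-1}$ depends on $S$ (not just on $\eps q$), the vector field $\rho^{-1}\nabla\check q$ is \emph{not} a gradient: writing $\rho^{-1}\nabla\check q=\nabla(\rho_0^{-1}\check q)-\check q\,\nabla\rho_0^{-1}+O(\eps)$ with $\rho_0=\rho(0,S)$, one finds $\mathbb{P}(\rho^{-1}\nabla\check q)=-\mathbb{P}(\check q\,\nabla\rho_0^{-1})+O(\eps)$, which is $O(1)$, so $\eps^{-1}\mathbb{P}(\rho^{-1}\nabla\check q)$ is a priori $O(\eps^{-1})$. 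Hence $\p_t(\mathbb{P}u)$ is \emph{not} uniformly bounded for general data. The correct object --- and this is precisely the substitution emphasised in Section~\ref{sect stat 1} --- is the weighted vorticity $\nabla\times(\rho_0(S)u)$: the evolution equation \eqref{curlv eq} shows $D_t(\nabla\times(\rho_0 u))$ is uniformly bounded in $H^3$, because the singular term $\nabla(\eps^{-1}q)$ becomes curl-free after multiplication by $\rho_0$, and the correction $(\rho-\rho_0)/\rho$ is $O(\eps)$. With this fix your Aubin--Lions argument gives strong convergence of $\nabla\times(\rho_0(S)u)$ in $C_tH^{3-\delta}_{\mathrm{loc}}$, which is what the paper uses. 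Recovering $u\to u^0$ from this plus $\mathcal{Q}u\to 0$ then requires the short algebraic manipulation with $\mathcal{P}(\rho_0(S^0)\mathcal{P}(u-u^0))$ carried out in Section~4.3; it is not immediate from $\mathbb{P}u$ alone.

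\textbf{A methodological difference in Step~3.} Your proposal to reduce to the whole space by even reflection across $\Sigma$ (legitimate because $q$ satisfies a Neumann condition inherited from $u_3|_\Sigma=0$) and then invoke whole-space dispersive/RAGE-type decay is a valid alternative. The paper instead follows Alazard and works directly in the half-space: it applies a wave-packet transform in $t$ to the first-order system \eqref{EuqU}, introduces the variable-coefficient operator $P^0(t,\tau,\nabla)=a_0(\tilde S^0)\tau^2+\nabla\cdot(\rho_0^{-1}(\tilde S^0)\nabla\,\cdot\,)$ (note the divergence-form Laplacian, not $-\Delta$ as you wrote), and uses the microlocal-defect-measure machinery together with the kernel lemma $\ker_{L^2}P^0(t,\tau,\nabla)(1-\Delta_N)^{-1}=\{0\}$, which is where the entropy decay \eqref{EntropyDecay} and the unboundedness of $\Omega$ enter. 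Both routes lead to $q\to 0$ and $\nabla\cdot u\to 0$ in $L^2_tH^{4-\delta}_{\mathrm{loc}}$; the paper's is closer to the existing literature and avoids having to track the reflected coefficients.
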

		
		\begin{rmk}[Boundedness issue of the domain]
			In the proof of the Theorem \ref{main thm, ill data}, the uniform estimates can be established regardless of the boundedness of the domain. However, in the proof of the Theorem \ref{main thm, limit}, the unboundedness of the domain, for example assuming $\Omega$ to be the half-space or the exterior of a compact smooth domain in $\mathbb{R}^d$, is necessary due to the (global) dispersion property for the wave equation. The strong convergence in time can be obtained by proving local energy decay via the defect measure technique \cite{Metivier2001limit, Alazard2005limit}. The entropy decay condition \eqref{EntropyDecay} is also needed due to H\"ormander \cite[Theorem 17.2.8]{Hormander3} used in the proof of strong convergence.
		\end{rmk}		
				
		\subsection{Organization of the paper}\label{sect org} 
		This paper is organized as follows. In Section \ref{sect overview intro}, we discuss the main difficulties and briefly introduce our strategies to tackle the problem. Then Section \ref{sect uniform} is devoted to the proof of uniform estimates in Mach number. The strong convergence to the inhomogeneous MHD system is established in Section \ref{sect limit} by using the technique of microlocal defect measure as in \cite{Metivier2001limit, Alazard2005limit}.

		\subsection*{List of Notations}
		\begin{itemize}
			\item ($L^\infty$-norm) $\|\cdot\|_{\infty}:= \|\cdot\|_{L^\infty(\Omega)}$, {$|\cdot|_{\infty}:=\|\cdot\|_{L^\infty(\Sigma)}$}. 
			\item (Interior Sobolev norm) $\|\cdot\|_{s}$:  We denote $\|f\|_{s}:= \|f(t,\cdot)\|_{H^s(\Omega)}$ for any function $f(t,y)\text{ on }[0,T]\times\Omega$.
			\item  (Boundary Sobolev norm) $|\cdot|_{s}$:  We denote $|f|_{s}:= |f(t,\cdot)|_{H^s(\Sigma)}$ for any function $f(t,y)\text{ on }[0,T]\times\Sigma$.
			\item (Polynomials) $P(\cdots)$ denotes a generic polynomial in its arguments with non-negative coefficients.
			\item (Commutators) $[T,f]g=T(fg)-f(Tg)$, $[f,T]g=-[T,f]g$ where $T$ is a differential operator and $f,g$ are functions.
			\item (Equality modulo lower order terms) $A\lleq B$ means $A=B$ modulo lower order terms. 
		\end{itemize}

		\section{Difficulties and strategies}\label{sect overview intro}	
			Before going to the detailed proofs, we briefly discuss the main difficulties in this problem and our strategies to overcome these difficulties. In particular, we focus on the difficulties that do not appear in neither of Euler equations, isentropic MHD equations or the case of well-prepared initial data.
			
			The system of compressible ideal MHD with the perfectly conducting wall conditions is a first-order hyperbolic system with characteristic boundary conditions of constant multiplicity, for which there is a potential of normal derivative loss. This also happens for Euler equations and elastodynamics inside a rigid wall with the slip boundary condition, but the vorticity for those two equations can be controlled in standard Sobolev spaces (see Schochet \cite{Schochet1987limit} and the third author \cite{Zhang2021elasto}), which could compensate the loss of normal derivative. However, for compressible ideal MHD, one encounters a loss of normal derivative in vorticity analysis caused by the simultaneous appearance of compressibility and the magnetic field. That is exactly one has to use the anisotropic Sobolev spaces to prove the local existence.
		
		\subsection{Motivation to define $E(t)$: a special structure in vorticity analysis}\label{sect stat 1}
		We are inspired by a special structure of Lorentz force in vorticity analysis to define the energy functional $E(t)$ as in \eqref{energy intro}. Let us take the estimate of $\|\nab\times u\|_3^2$ in the control of $\|u\|_4^2$ for example.
		\subsubsection{Substitution of the coefficient}
		 In the case of non-isentropic problem with general initial data, one has to replace the coefficient $\rho(\eps q,S)$ by $\rho_0:=\rho(0,S)$ to derive the evolution equation of the vorticity and the current, namely,
		\[
D_t(\rho_0 u)-\rho_0 (\rho^{-1} B\cdot\nab) B = -\underbrace{\nab(\eps^{-1} q + \frac12 |B|^2)}_{\text{curl-free}} +\underbrace{\frac{\rho - \rho_0}{\rho}}_{=O(\eps)}\nab(\eps^{-1} q + \frac12 |B|^2),
		\] as stated in M\'etivier-Schochet \cite{Metivier2001limit}. Or else, there must be terms like $\p^\alpha \rho (\nab\times D_t u)$ whose coefficient $\p^\alpha\rho = O(\eps)\p^\alpha q + O(1)\p^\alpha S$ is $O(1)$ size and so this term is not uniformly bounded. The advantage of such substituiton is that the unbounded terms are avoided and the extra-generated term has small coefficient and its leading-order part is curl-free. Then it is easy to reproduce the control of $\nab\times(u,B)$ from that of $\nab\times(\rho_0 u, \rho_0 B)$ because $\rho_0$ only depends on $S$ whose estimate is straightforward.
		
		\subsubsection{A weighted anisotropic structure contributed by the Lorentz force}
		In the analysis of $\|\nab\times (\rho_0u)\|_3^2$, we must encounter a fifth-order term
		\[
		\mathcal{K}:=-\int_{\Om}\p^3 \nab\times (\rho_0 B) \cdot \p^3 \nab\times (\rho_0\rho^{-1} B\nab\cdot u) \dx,
		\]which is uncontrollable in the setting of $H^4$. But if we insert the continuity equation $\nab\cdot u=-\eps aD_t  q$, commute $\nab\times$ with $D_t$ and insert the momentum equation $-\nab q=\eps(\rho D_t u +B\times(\nab\times B))$, we can see that
		\begin{align*}
		\p^3 \nab\times (\rho_0\rho^{-1} B\nab\cdot u) = -\eps^2a\rho^{-1} B\times \left[B\times D_t(\p^3\nab\times(\rho_0 B))\right] + \eps^2\rho_0 a B\times (\p^3 D_t^2 u)+ \text{low-order terms}.
		\end{align*} On the right side, the first term contributes to an energy term $-\frac12\ddt \io a\rho^{-1}|\eps B\times(\p^3\nab\times(\rho_0 B))|^2\dx$ that gives the regularity of the Lorentz force, while the second term exhibits a ``weighted" anisotropic structure that indicates us to trade one normal derivative (in the curl operator $\nab\times$) for the second-order tangential derivative $\eps^2D_t^2$. As $\eps\to 0$, this anisotropic part converges to 0, and so the incompressible problem can be studied in standard Sobolev spaces. For details, we refer to Section \ref{sect curl}.
		
		\subsubsection{The whole reduction procedures}
		Apart from the vorticity estimates, we also need to consider the divergence estimates. The continuity equation indicates us to trade one normal derivative in $\nab\cdot u$ for one tangential derivative $\eps D_t q$. Then for the pressure $q$, we can always invoke the momentum equation $-\nab q=\eps(\rho D_t u + B\times(\nab\times B))$ to convert a spatial derivative to $\eps D_t u$ and the estimate of current $\nab\times B$ until there is no spatial derivative falling on $q$ and no spatial derivative falling on $u$. The full details are referred to Section \ref{sect reduction q}.
		
		It should be remarked that all tangential derivatives generated in the above reduction procedures are material derivatives, and that explains why we write $D_t$ instead of $\p_t$ when defining $E(t)$. In Section \ref{sect stat 3}, we will see using $D_t$ instead of $\p_t$ is necessary for the non-isentropic problem with general initial data, as opposed to the case of well-prepared data or the isentropic problem in \cite{JuWangMHDlimit, WZ2023CMHDlimit, Secchi2024}. 
		
		We conclude the reduction scheme in the following two diagrams
		{\small\[
\begin{tikzcd}
\|(u,B)\|_4 \arrow[d, "~~", "\text{div}" ] \arrow[r, "~~", "{\text{curl}}" ]&\|(\eps D_t)^2(u,B)\|_3 \arrow[d, "~~", "\text{div}" ]\arrow[r, "~~", "{\text{curl}}" ]&\|(\eps D_t)^4(u,B)\|_2 \arrow[d, "~~", "\text{div}" ]\arrow[r, "~~", "{\text{curl}}" ]&\|(\eps D_t)^6(u,B)\|_1 \arrow[d, "~~", "\text{div}" ]\arrow[r, "~~", "{\text{curl}}" ]&\|(\eps D_t)^8(u,B)\|_0\arrow[dd, "~~", "~~" ] \\
\|\eps D_t q\|_3\arrow[d, "~~", "\text{div reduction}" ]\arrow[ur,"~~","~~"] &\|(\eps D_t)^3 q\|_2\arrow[d, "~~", "\text{div reduction}" ]\arrow[ur,"~~","~~"]&\|(\eps D_t)^5 q\|_1\arrow[d, "~~", "\text{div reduction}" ] \arrow[ur,"~~","~~"]&\|(\eps D_t)^7 q\|_0\arrow[d, "~~", "\text{div reduction}" ]\\
{(\eps D_t)^4\text{-estimate}}\arrow[rrd, "~~", "~~" ]&{(\eps D_t)^5\text{-estimate}}\arrow[rd, "~~", "~~" ]&{(\eps D_t)^6\text{-estimate}}\arrow[d, "~~", "~~" ]&{(\eps D_t)^7\text{-estimate}}\arrow[ld, "~~", "~~" ]&{(\eps D_t)^8\text{-estimate}}\arrow[lld, "~~", "~~" ]\\
&&\text{closed}&&
\end{tikzcd}
		\]}
		\begin{center}
		Diagram 1: the reduction scheme for the curl part.
		\end{center}
		In the above diagram, the ``div reduction" refers to the reduction procedure for divergence and pressure in the previous paragraph. It can also be presented as the diagram below (we take $l=0$ for example).
		{\small\[
		\begin{tikzcd}
		&&\text{Further div-curl}&\\
		&&E_5(t)\arrow[u, "~~", "~~" ]&\\
		\|(u,B)\|_4\arrow[urr, "~~", "\text{curl}" ]  \arrow[dr, "~~", "{\text{div}}" ]&\|\eps D_t(u,B)\|_3\arrow[ur, "~~", "\text{curl}" ] \arrow[dr, "~~", "{\text{div}}" ]&\|(\eps D_t)^2(u,B)\|_2  \arrow[dr, "~~", "{\text{div}}" ]\arrow[u, "~~", "\text{curl}" ]&\|(\eps D_t)^3(u,B)\|_1 \arrow[ul, "~~", "\text{curl}" ] \arrow[dr, "~~", "{\text{div}}" ]&\|(\eps D_t)^4(u,B)\|_0\arrow[d, "~~", "\text{together}" ] \\
		\|q\|_4\arrow[ur, "~~","\text{eq}" ] \arrow[u, "~~","\text{eq}" ] & \|\eps D_t q\|_3 \arrow[ur, "~~","\text{eq}" ] \arrow[u, "~~","\text{eq}" ]& \|(\eps D_t)^2 q\|_2\arrow[ur, "~~","\text{eq}" ] \arrow[u, "~~","\text{eq}" ] & \|(\eps D_t)^3 q\|_1 \arrow[ur, "~~","\text{eq}" ] \arrow[u, "~~","\text{eq}" ] & \|(\eps D_t)^4 q\|_0\arrow[d, "~~","\text{tangential}" ] \\
		&&&&\text{closed}
		\end{tikzcd}
		\]}
		\begin{center}
		Diagram 2: the reduction scheme for the divergence part in $E_4(t)$.
		\end{center}
		Here ``eq" means invoking the momentum equation and ``div" means invoking the continuity equation.
		
		Now, we can explain why we define $E(t)$ to be 
		\[
		\sum_{l=0}^4\sum_{k=0}^{4-l}\left\| (\eps D_t)^{k+2l} (q,u,B,S,\rho^{-1}B\cdot\nab S)\right\|_{4-k-l}^2
		\] Let $l\in\{0,1,2,3\}$ be the number of times that we do the vorticity estimates. After such $l$ times of reduction for $\|u, B\|_4$, we have at most $4-l$ normal derivatives left, but we also obtain $(\eps D_t)^{2l}$ falling on each variable. This step corresponds to the first row of Diagram 1 and the $(\eps D_t)^{2l}$-part in the energy functional. Then for each fixed $l\in\{0,1,2,3\}$, Diagram 2 is parallel to the study of Euler equations, that is, one trades one spatial derivative for one $\eps D_t$ in the divergence part. Finally, we only need to control the estimates of full material derivatives which are purely tangential derivatives and this step is analyzed in Section \ref{sect tg}.
		
		\subsection{Enhanced ``directional" regularity of the entropy}\label{sect stat 2}
		One may notice that we require the directional derivative $\rbp S$ has the same regularity as the other variables. This is in general incorrect for a first-order hyperbolic system, but it can really be achieved for compressible ideal MHD as long as we assume the initial data of $\rbp S$ has the corresponding regularity. This fact is easy to prove, as \textit{we observe that $D_t$ commutes with $\rbp$} which then leads to $D_t(\rbp S)=0$ and the desired enhanced regularity. In fact, this is even easier to be observed if we study the free-boundary MHD in Lagrangian coordinates, e.g., \cite{Zhang2021CMHD}, in which the material derivative becomes $\p_t$ and $\rbp$ becomes {\it time-independent}!
		
		Such enhanced directional regularity is necessary to control the vorticity and the current. For example, in the analysis of $\|\nab\times(\rho_0 u)\|_3$, we must encounter the following two terms
		\begin{align*}
		-\io\p^3\nab\times\left( \underline{(\rho^{-1} B\cdot\nab \rho_0)}B\right)\cdot\p^3\nab\times(\rho_0 u)\dx- \io \p^3\nab\times\left(\underline{(\rho^{-1} B\cdot\nab \rho_0)}u\right)\cdot\p^3\nab\times(\rho_0 B)\dx,
		\end{align*}in which the underlined terms essentially require the $H^4$ regularity of $\rbp S$. Such difficulty never appears in the study of Euler equations, isentropic problems, or non-isentropic problems with well-prepared initial data. On the other hand, our observation exactly resolves this issue. Similar argument also applies to the recent work by the second author and the third author \cite{WZ2024elasto} about neo-Hookean elastodynamics.

		\subsection{Usage of material derivatives and ``modified" Alinhac good unknowns}\label{sect stat 3}
		Now, we turn to explain why we shall use the variable-coefficient derivative $D_t$ instead of the simpler one $\p_t$ to define $E(t)$. In fact, this is due to the unboundedness of the first-order time derivatives in the case of general initial data. If we decompose the material derivative $D_t$ into $\p_t$, tangential spatial derivative $\TP=\p_1$ or $\p_2$ and the co-normal part $\omega(x_3)\p_3$, then we must separately do the tangential estimates for $\TT=\p_t,\TP,\omega(x_3)\p_3$ as in the case of well-prepared data  \cite{WZ2023CMHDlimit}. However, this will again produce singular terms such as $((\eps\TP)^8\rho )(D_t u)$ and we cannot substitute $\rho$ by $\rho_0$ as in the vorticity control. To overcome this difficulty, we must ensure every tangential derivative to be the material derivative $D_t$ such that we could completely eliminate the $O(1)$-size terms $\TT^\alpha \rho \approx O(\eps)\TT^\alpha q + O(1)\TT^\alpha S$ by using $D_t S=0$.
		
		Under our setting, a new difficulty appears, that is, $D_t$ does not commute with the $\nab$. In Lemma \ref{Prop_AGU}, we compute the concrete form of $[\p, D_t^k]f$ and find that each term in this commutator must be a monomial of $(\p D_t^j f)$ and $(\p D_t^m u)$ for some $j, m\in\Z$. What's more difficult is that the top-order commutator, namely $[\p, D_t^8]f$, contains $(\p D_t^7u)(\p f)$ and $8(\p u)(\p D_t^7 f)$ whose $L^2(\Om)$ norms cannot be directly controlled when $\p=\p_3$. To overcome this difficulty, we adopt the idea of Alinhac good unknowns \cite{Alinhac1989good}, which reveals that the essential leading-order part in $D_t^8\p_i f$ is not simply $\p_i D_t^8 f$ but actually the $\p_i$ derivative of the Alinhac good unknowns (denoted by $\mathbf{F}^8$)
		\[
		D_t^8 \p_i f \neq \p_i D_t^8 f + L^2(\Om)\text{-controllable terms}\xRightarrow{~~~~~~} D_t^8 \p_i f = \p_i \mathbf{F}^8 + L^2(\Om)\text{-controllable terms}.
		\] In other words, the core idea of using Alinhac good unknowns is to rewrite the uncontrollable terms to be the form $\p_i(L^2\text{-controllable})$ terms and merge the terms in the parenthesis into the leading-order term, such that the covariance of the system is still preserved after being differentiated by variable-coefficient derivatives. In view of the singular limit problems, this step is used to preserve the anti-symmetric structure $E_0(U)\p_t U + \eps^{-1}\nab U = f$ for the differentiated system.
		
		Under the setting of standard Sobolev spaces, $\mathbf{F}^8$ is defined to be $D_t^8 f-D_t^7 u\cdot\nab f$, see for example \cite{GuWang2016LWP}. Under the setting of anisotropic Sobolev spaces, the third author \cite{Zhang2021CMHD} observed that the merged terms must be modified according as the variable $f$. After careful and delicate calculations, the ``modified" Alinhac good unknowns used in this paper are (see also \cite[Section 5.5]{Zhang2021CMHD})
		\[
		\mathbf{U}_i^8:=D_t^8 u_i - 9 D_t^7 u\cdot \nab u_i,\q\q \mathbf{Q}^8:= D_t^8\qc - D_t^7u\cdot\nab \qc,~~\qc:=q+\frac{\eps}{2}|B|^2.
		\]Such difference is due to the fact that we must preserve the divergence structure for the velocity but there is no such structure for the pressure. We refer the details to Section \ref{sect tg}.

		\section{Uniform energy estimates}\label{sect uniform}		
		This section is devoted to the proof of Theorem \ref{main thm, ill data}, which will be split into several parts: $L^2$ estimates, entropy estimates, tangential estimates for material derivatives and the reduction of normal derivatives via div-curl analysis.
		\subsection{$L^2$ estimate}\label{sect L2}
		First, we establish $L^2$-energy estimate for \eqref{CMHD3}.
		\begin{prop}\label{lem L2} We define the $L^2$ energy $E_0(t)$ by
		\[
		E_0(t):=\frac{1}{2} \int_{\Om} a q^2 + \rho |u|^2 + |B|^2 +S^2 \dx.
		\] Then it satisfies
		\begin{align}
		\frac{\mathrm{d}E_0(t)}{\dt}\leq P(E_4(t)).
		\end{align}
		\end{prop}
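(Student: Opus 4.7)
The plan is to differentiate $E_0(t)$ in time, substitute the evolution equations \eqref{CMHD3} for each time derivative, and then integrate by parts, exploiting the boundary conditions $u_d = B_d = 0$ on $\Sigma$ together with the symmetric structure of the hyperbolic system to eliminate all $O(\eps^{-1})$ contributions and to exhibit cancellations between the Lorentz force and the induction equation. The remainder will consist only of lower-order terms whose $L^\infty$ factors are controlled by $E_4(t)$ through Sobolev embedding $H^4(\Om)\hookrightarrow L^\infty(\Om)$.

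Concretely, I would first compute
\[
\ddt E_0 = \io \Big(\tfrac12(\p_t a) q^2 + aq\,\p_t q + \tfrac12(\p_t\rho)|u|^2 + \rho u\cdot\p_t u + B\cdot\p_t B + S\,\p_t S\Big)\dx,
\]
and replace $\p_t$ by $D_t - u\cdot\nab$ using \eqref{CMHD3}. The crucial singular-parameter cancellation comes from the pair
\[
\io aq\,D_tq\,\dx + \io \rho u\cdot D_t u\,\dx = -\eps^{-1}\io q\,\nab\cdot u\,\dx - \eps^{-1}\io u\cdot\nab q\,\dx + \cdots,
\]
and an integration by parts, together with $u_d|_\Sigma = 0$, makes these two $\eps^{-1}$ integrals add to zero. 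Next, I would combine the Lorentz force term from the momentum equation with the stretching and compression terms of the induction equation: after integrating by parts on $\io u\cdot(B\cdot\nab B)\,\dx$, the boundary contribution vanishes thanks to $B_d|_\Sigma = 0$ and $\nab\cdot B = 0$, and the resulting bulk term cancels $\io (B\cdot\nab u)\cdot B\,\dx$ coming from $B\cdot\p_t B$. A parallel integration by parts on $\io u\cdot\nab |B|^2\,\dx$ cancels the $\io |B|^2(\nab\cdot u)\,\dx$ produced by $D_t B = B\cdot\nab u - B(\nab\cdot u)$, again using $u_d|_\Sigma = 0$.

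After these cancellations, the remaining terms are all of the form $\io (\text{coefficient})(\text{quadratic in }q,u,B,S)\,\dx$, where the coefficient is built out of $\p_t a,\, u\cdot\nab a,\, \p_t\rho,\, u\cdot\nab\rho,\, \nab u$, or $u$. Using the state equations $a=a(\eps q,S),\,\rho=\rho(\eps q,S)$ together with $D_tS = 0$ and $aD_t q=-\eps^{-1}\nab\cdot u$, one computes $D_t a = -(a_p/a)\nab\cdot u$ (and similarly for $\rho$), so that $\p_t a$ and $\p_t\rho$ are bounded in $L^\infty$ by $\|u\|_\infty+\|\nab u\|_\infty$; these, together with $\|a\|_\infty$, $\|\rho\|_\infty$, $\|\rho^{-1}\|_\infty$, and $\|B\|_\infty$, are controlled by $P(E_4(t))$ via the embedding $H^4\hookrightarrow W^{1,\infty}$. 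Finally the entropy term $\io S\,D_t S\,\dx$ vanishes thanks to $D_t S = 0$, and $\io S(u\cdot\nab S)\,\dx$ is handled by the same integration-by-parts argument, giving $\tfrac12\io(\nab\cdot u)S^2\,\dx$.

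I do not expect a real obstacle here: the whole statement is essentially the standard symmetrizable-hyperbolic $L^2$ estimate made uniform in $\eps$. The only item requiring care is keeping track of signs and boundary traces in the integrations by parts, so that all $O(\eps^{-1})$ contributions genuinely cancel rather than merely being formally bounded; this is exactly what the slip condition $u_d|_\Sigma=0$ and the perfectly conducting condition $B_d|_\Sigma=0$ are designed to ensure.
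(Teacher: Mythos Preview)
Your proposal is correct and follows essentially the same approach as the paper: differentiate $E_0(t)$, substitute the equations, and use integration by parts together with $u_d|_\Sigma=B_d|_\Sigma=0$ and $\nab\cdot B=0$ to cancel the $\eps^{-1}$ terms and the Lorentz-force/induction cross terms, leaving only $\tfrac12\io(\p_t a+\nab\cdot(au))q^2 + (\nab\cdot u)(|B|^2+S^2)\,\dx$ to be bounded by $P(E_4(t))$. The paper's write-up is more compressed but identical in substance.
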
 Invoking the momentum equation and integrate by parts, we have:
		\begin{align}
			&\frac{1}{2}\ddt \int_{\Om} a q^2 + \rho |u|^2 + |B|^2 +S^2 \dx\notag\\
			=&\frac{1}{2}\int_{\Om} (\p_t a +\nabla\cdot(a u))q^2+ \nab\cdot u( |B|^2+S^2) \dx- \eps^{-1}\int_{\Om}\nab\cdot(q u) \dx\notag\\
			&+\int_{\Om}(B\cdot\nab B)\cdot u + (B\cdot\nab u)\cdot B \dx -\int_{\Om} \frac{1}{2}\nab|B|^2 \cdot u + (B\cdot B)\nab\cdot u \dx	\notag\\
			=&\frac{1}{2}\int_{\Om} (\eps\p_q a D_t q + a \nabla \cdot u)q^2+ \nab\cdot u( |B|^2+S^2) \dx,
		\end{align}
		which give the desired energy estimate. This also implies that	
		\begin{align}\label{L2}
			 \|(q, u, B, S)\|_0^2\lesssim E_4(0) + \int_0^t P(E_4(\tau))\dtau.
		\end{align}
		\subsection{Enhanced regularity of the entropy}\label{sect entropy}
		
			We now verify that the directional derivative $\rho^{-1}B\cdot\nab$ commutes with the material derivative $D_t$. This property leads to the enhanced regularity of the entropy in the direction of the magnetic field.
		\begin{prop}\label{Prop_Com}
			Let $(q,u,B,S)$ be a solution to system \eqref{CMHD3}, then the following equality holds:
			\begin{equation}
				[\rho^{-1} B\cdot\nab, D_t]=0.
			\end{equation}
		\end{prop}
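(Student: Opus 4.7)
The plan is to verify the commutator identity by a direct computation on an arbitrary smooth test function $f$, using two basic identities:
\begin{itemize}
\item the standard commutator $[D_t,\p_k]f = (\p_k u^j)\p_j f$, which follows from $D_t = \p_t + u^j\p_j$;
\item the vector identity $D_t(\rho^{-1}B) = (\rho^{-1}B\cdot\nab) u$, which must be extracted from the MHD evolution of $\rho$ and $B$.
\end{itemize}

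\noindent\textbf{Step 1: Reduction to a vector identity.} First I would write $X := \rho^{-1}B$ and expand
\begin{align*}
[X\cdot\nab, D_t]f &= X^k\p_k(D_t f) - D_t(X^k\p_k f) \\
&= X^k\p_k D_t f - (D_t X^k)\p_k f - X^k D_t\p_k f\\
&= -(D_t X^k)\p_k f + X^k\bigl(\p_k u^j\bigr)\p_j f,
\end{align*}
where in the last step I used $[\p_k,D_t]f = -(\p_k u^j)\p_j f$. After relabeling indices in the second term, the commutator vanishes if and only if
\begin{equation*}
D_t X^k = X^j\,\p_j u^k, \qquad\text{i.e.,}\qquad D_t(\rho^{-1}B) = (\rho^{-1}B\cdot\nab)u.
\end{equation*}

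\noindent\textbf{Step 2: Verification of the vector identity.} Using the product rule and the continuity equation in its original form $D_t\rho + \rho\,\nab\cdot u = 0$ (equivalent to \eqref{continuity eq f} via $D_t S=0$), I get
\begin{equation*}
D_t(\rho^{-1}B) = -\rho^{-2}(D_t\rho)B + \rho^{-1}D_t B = \rho^{-1}(\nab\cdot u)B + \rho^{-1}D_t B.
\end{equation*}
Substituting the induction equation $D_t B = B\cdot\nab u - B(\nab\cdot u)$ from \eqref{CMHD3} cancels the divergence terms and leaves exactly $\rho^{-1}B\cdot\nab u = (X\cdot\nab)u$, as required.

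\noindent\textbf{Main obstacle.} There is essentially no obstacle beyond careful bookkeeping of signs and index relabeling, since the identity $D_t(\rho^{-1}B) = (\rho^{-1}B\cdot\nab)u$ is the Lagrangian statement that the rescaled magnetic field $\rho^{-1}B$ is frozen into the fluid flow, which is a classical consequence of the induction equation together with mass conservation. Once this is in hand, combining Steps 1 and 2 gives $[\rho^{-1}B\cdot\nab, D_t]f = 0$ for every smooth $f$, which is the content of the proposition. As remarked in Section \ref{sect stat 2}, this is the key structural fact that propagates the enhanced directional regularity of the entropy: applying the identity to $f = S$ yields $D_t(\rho^{-1}B\cdot\nab S) = \rho^{-1}B\cdot\nab(D_t S) = 0$, so $\rho^{-1}B\cdot\nab S$ is transported and inherits the anisotropic Sobolev regularity of its initial datum.
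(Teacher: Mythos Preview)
Your proof is correct and follows essentially the same route as the paper: both arguments first establish the frozen-in identity $D_t(\rho^{-1}B)=(\rho^{-1}B\cdot\nab)u$ from the continuity and induction equations, then expand the commutator on a generic test function to see that the remaining terms cancel. One small slip: the commutator sign you state, $[\p_k,D_t]f=-(\p_k u^j)\p_j f$, is off (it should be $+(\p_k u^j)\p_j f$, equivalently $[D_t,\p_k]f=-(\p_k u^j)\p_j f$), though your displayed computation in Step~1 nonetheless reaches the correct expression.
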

		\begin{proof}
			By using equations of $\rho$ and $B$, we have that
			\begin{align}
				D_t (\rho^{-1} B_i )=&~ \rho^{-1} D_t B_i + B_i D_t \rho^{-1}\notag\\
				=&~\rho^{-1}(B\cdot \nab u_i -B_i\nab\cdot u) -\rho^{-2}B_i D_t \rho \notag\\
				=&~\rho^{-1}( B\cdot \nab u_i -B_i\nab\cdot u) +\rho^{-1}B_i\nab\cdot u\notag\\
				=&~\rho^{-1}B\cdot \nab u_i,\quad i=1,\cdots,d.
			\end{align}
			For any function $f\in C([0,T];H^2)\cap C^1([0,T];H^1)$, we have that
			\begin{align}
				D_t( \rho^{-1} B\cdot\nab f)=&~  \sum_{k,j=1}^{d}\left( \p_t (\rho^{-1} B_k\p_k f) +  u_j\p_j( \rho^{-1} B_k\p_k f )\right)\notag\\
				=&~ \sum_{k,j=1}^{d} \left( \rho^{-1}B_k\p_k (\p_t f +u_j\p_j f) + \p_t (\rho^{-1}B_k) \p_k f -\rho^{-1} B_k\p_k u_j\p_j f +u_j\p_j(\rho^{-1}B_k)\p_k f \right) \notag\\
				=&~(\rho^{-1}B\cdot\nab) D_t f + \sum_{k=1}^{d} (  D_t(\rho^{-1}B_k )-\rho^{-1} B\cdot\nab u_k )\p_k f=(\rho^{-1}B\cdot\nab) D_t f.
			\end{align}
			
		\end{proof}

		Since $D_t S=0$ and $D_t(\rho^{-1}B\cdot\nab S)=0$, we can easily prove the estimates for $(S,\rho^{-1}B\cdot\nab S)$. The proof does not involve any boundary term because we do not integrate by parts, so we omit the details.
		\begin{cor}\label{Cor_Entropy} The entropy $S$ in \eqref{CMHD3} satisfies the following estimates:
		\begin{equation}
			\ddt \sum_{l=0}^4\sum_{k=0}^{4-l}\left\|(\eps D_t)^{k+2l}(S,\rho^{-1}B\cdot\nab S)\right\|_{4-k-l}^2\leq P(E(t)).
		\end{equation}
		\end{cor}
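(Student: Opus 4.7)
The key observation is that the whole statement collapses to a pair of routine transport-type $H^4$ estimates. Proposition~\ref{Prop_Com} says $[\rbp, D_t] = 0$, and the entropy equation in \eqref{CMHD3} gives $D_t S = 0$. Combining the two yields $D_t(\rbp S) = \rbp(D_t S) = 0$. Thus $S$ and $W := \rbp S$ are both preserved along the flow, so $(\eps D_t)^{k+2l}(S, W) \equiv 0$ for every $(k,l)$ with $k + 2l \geq 1$. The entire double sum therefore reduces to the single contribution at $(k,l) = (0,0)$, and what has to be shown is
$$ \ddt \bigl( \|S\|_4^2 + \|W\|_4^2 \bigr) \leq P(E(t)). $$

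For the estimate on $\|S\|_4$, I would fix a multi-index $\alpha$ with $|\alpha| \leq 4$ and commute $\p^\alpha$ through $D_t S = 0$, obtaining the identity $D_t \p^\alpha S = -[\p^\alpha, u\cdot\nab]S$. Writing $\p_t = D_t - u\cdot\nab$ and integrating by parts in the transport term, the boundary contribution $\int_\Sigma u_d |\p^\alpha S|^2 \dS$ vanishes by the slip condition $u_d|_\Sigma = 0$, leaving
$$ \tfrac{1}{2}\ddt \|\p^\alpha S\|_0^2 = -\io \p^\alpha S \cdot [\p^\alpha, u\cdot\nab] S \dx + \tfrac{1}{2}\io (\nab\cdot u)\,|\p^\alpha S|^2 \dx. $$
A standard Moser/Kato--Ponce bound $\|[\p^\alpha, u\cdot\nab] S\|_0 \lesssim \|\nab u\|_{L^\infty} \|S\|_4 + \|u\|_4 \|\nab S\|_{L^\infty}$ together with the Sobolev embedding $H^4(\Om) \hookrightarrow W^{1,\infty}(\Om)$ controls the right-hand side by $P(E(t))$.

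The estimate on $\|W\|_4$ is verbatim the same: the transport equation $D_t W = 0$ has the same structure as $D_t S = 0$, and $\|W\|_4 \lesssim \sqrt{E(t)}$ already by the definition of $E(t)$, so the commutator and divergence terms close exactly as above.

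There is no genuine obstacle. The only two ingredients beyond elementary transport analysis are (i) the commutation $[\rbp, D_t] = 0$ from Proposition~\ref{Prop_Com}, which is what elevates the directional derivative $\rbp S$ to the status of a quantity transported by the flow, and (ii) the slip condition $u_d|_\Sigma = 0$, which kills every boundary term generated when integrating by parts against $u\cdot\nab$. Both are built into the hypotheses, and the enhanced ``directional'' regularity of the entropy advertised in Section~\ref{sect stat 2} is precisely what this conservation law encodes.
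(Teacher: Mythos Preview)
Your proof is correct and follows essentially the same approach as the paper: both use $D_t S=0$ together with Proposition~\ref{Prop_Com} to reduce everything to a transport estimate on $\|S\|_4^2+\|\rbp S\|_4^2$, after which the commutator $[\p^\alpha,u\cdot\nab]$ is handled by standard product estimates.

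The one cosmetic difference is that the paper asserts ``the proof does not involve any boundary term because we do not integrate by parts,'' whereas you integrate by parts explicitly in the transport term $u\cdot\nab\p^\alpha S$ and invoke $u_d|_\Sigma=0$ to kill the boundary contribution. Your route is the honest one: a genuine no-integration-by-parts argument would have to bound $\|u\cdot\nab\p^\alpha S\|_0$ directly, which for $|\alpha|=4$ asks for $\|S\|_5$ and is not available in $E(t)$. What the paper presumably has in mind is the weighted identity $\tfrac12\ddt\io\rho|\p^\alpha S|^2\dx=\io\rho(D_t\p^\alpha S)\p^\alpha S\dx$ from Lemma~\ref{lem transport}, which hides the same integration by parts (and the same use of $u_d|_\Sigma=0$) inside the Reynolds transport formula. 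Either way the argument closes, and your version makes the mechanism transparent.
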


		\subsection{Tangential energy estimates}\label{sect tg}
		
		In this section, we will derive the energy estimates for tangential derivatives of solutions. It is more convenient to use the following equations derived from \eqref{CMHD3}:
		\begin{equation}\label{CMHD4}
			\begin{cases}
				a D_t \qc -\eps a B \cdot D_t B  + \eps^{-1} \nab\cdot u=0~~~&\text{in}~[0,T]\times\Om, \\
				\rho D_t u+\eps^{-1}\nab \qc -B\cdot\nabla B=0~~~& \text{in}~[0,T]\times\Om, \\
				D_t B -B\cdot \nab u -\eps a B( D_t \qc -\eps B\cdot D_t B)=0~~~&\text{in}~[0,T]\times\Om, \\
				\nabla\cdot B=0~~~&\text{in}~[0,T]\times\Om,\\
				D_t S=0~~~&\text{in}~[0,T]\times\Om,\\
				a=a(\eps \qc-\eps^2|B|^2/2,S)>0,\quad \rho=\rho(\eps \qc-\eps^2|B|^2/2,S)>0~~~&\text{in}~[0,T]\times\bar{\Om},
			\end{cases} 
		\end{equation}
		where $\qc = q+ \eps|B|^2/2$ is the total pressure.

		\begin{prop}\label{Lem_TangentialEstimates}
			It holds that
			\begin{align}
				\sum_{k=1}^{8} \sup_{\tau\in[0,t]}\left\|(\eps D_t)^k(q,u,B)\right\|_0^2 \le	P(E(0)) +  E(t)\int_0^t P(E(\tau)) \mathrm{d}\tau.	
			\end{align}
			
		\end{prop}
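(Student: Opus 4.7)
The plan, for each $1\le k\le 8$, is to perform an $L^2$-energy estimate for the system obtained by applying $D_t^k$ to \eqref{CMHD4}. Two difficulties must be addressed simultaneously: the singular $\eps^{-1}$-factor in front of $\nabla\qc$ and $\nabla\cdot u$, and the non-commutation of $D_t$ with $\nabla$. To avoid destroying the anti-symmetric principal part, I would work with the modified Alinhac good unknowns (as in Section \ref{sect stat 3})
\begin{equation*}
\mathbf{Q}^k := D_t^k\qc - c_k\, D_t^{k-1}u\cdot\nabla\qc,\q \mathbf{U}^k := D_t^k u - c_k\, D_t^{k-1}u\cdot\nabla u,\q \mathbf{B}^k := D_t^k B - c_k\, D_t^{k-1}u\cdot\nabla B,
\end{equation*}
where the combinatorial constant $c_k$ (in particular $c_8=9$) is chosen to cancel the top-order contribution of $[D_t^k,\nabla]$. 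A direct computation then shows that $(\mathbf{Q}^k,\mathbf{U}^k,\mathbf{B}^k)$ satisfies a system of the same anti-symmetric form as \eqref{CMHD4}, with source $(\mathcal{R}_q^k,\mathcal{R}_u^k,\mathcal{R}_B^k)$ consisting of commutator remainders which, crucially, are expressible as $L^2$-quantities.

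Next, I would multiply this system by $\eps^{2k}(\mathbf{Q}^k - B\cdot\mathbf{B}^k,\,\mathbf{U}^k,\,\mathbf{B}^k)$ and integrate over $\Om$, so that integration by parts produces the energy identity
\begin{equation*}
\frac{\eps^{2k}}{2}\ddt\io \bigl( a|\mathbf{Q}^k - B\cdot\mathbf{B}^k|^2 + \rho|\mathbf{U}^k|^2 + |\mathbf{B}^k|^2\bigr)\dx = \mathcal{G}_1+\mathcal{G}_2+\mathcal{G}_3+\mathcal{G}_4.
\end{equation*}
The singular $\eps^{-1}$-coupling between $\nabla\mathbf{Q}^k$ and $\nabla\cdot\mathbf{U}^k$ cancels thanks to the preserved anti-symmetry, and the $B\cdot\nabla$ cross term $\mathcal{G}_3$ vanishes after one further integration by parts using $\nabla\cdot B=0$. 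All boundary contributions drop because $u_3=B_3=0$ on $\Sigma$ and $D_t$ is tangential there, hence $\mathbf{U}_3^k|_\Sigma = \mathbf{B}_3^k|_\Sigma = 0$ (with the propagation of $B_3|_\Sigma=0$ justified via Remark 1.1 and the compatibility conditions \eqref{comp cond}). The terms $\mathcal{G}_1,\mathcal{G}_2$ coming from $\p_t a,\nabla\cdot u$ and the $\eps a B\cdot D_t\mathbf{B}^k$ coupling are directly bounded by $P(E(t))$ using the embedding $H^4\hookrightarrow W^{2,\infty}$.

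The main obstacle is the $L^2$-control of $\mathcal{G}_4$, the remainder pairing, which contains three genuinely dangerous classes of terms. First, the top-order commutator $\eps^k[a,D_t^k]D_t\qc$ appears to produce the unbounded piece $\eps^k(D_t a)(D_t^k\qc)$; however, because $D_tS=0$, one has $D_t a = \eps(\p_q a)\,D_t\qc$, which carries an extra $\eps$ and reduces the term to a product of $(\eps D_t)^\ast\qc$-quantities already controlled by $E(t)$. This mechanism would \emph{fail} if $\p_t$ were used in place of $D_t$, which is precisely why the energy functional is defined using material derivatives. Second, whenever many tangential derivatives hit $\rho$ or $a$, the chain rule generates factors involving $\nabla S$ and the directional derivative $\rho^{-1}B\cdot\nabla S$; these are controlled by the enhanced regularity furnished by Proposition \ref{Prop_Com} and Corollary \ref{Cor_Entropy}. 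Third, the lower-order combinatorial remainders produced by iterating $[D_t,\nabla]=-(\nabla u)\cdot\nabla$ are monomials in $D_t^\alpha\p^\beta(q,u,B,S,\rho^{-1}B\cdot\nabla S)$ of strictly lower weighted order, bounded by $P(E(t))$ via standard product estimates. Putting everything together yields $\ddt\|\eps^k(\mathbf{Q}^k,\mathbf{U}^k,\mathbf{B}^k)\|_0^2 \le P(E(t))$. Integrating in time and converting back via
\begin{equation*}
\|(\eps D_t)^k(q,u,B)\|_0 \le \|\eps^k(\mathbf{Q}^k,\mathbf{U}^k,\mathbf{B}^k)\|_0 + c_k\|\eps^k D_t^{k-1}u\cdot\nabla(\qc,u,B)\|_0,
\end{equation*}
where the correction term is of strictly lower order with an $\eps$-gain and hence controlled by $E(t)$ times terms already estimated, produces the claimed bound.
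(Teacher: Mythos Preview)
Your overall architecture is right, but the $k=8$ case has a genuine gap. You assert that with a single constant $c_k$ (in particular $c_8=9$) for all three good unknowns, the resulting source terms $(\mathcal{R}_q^k,\mathcal{R}_u^k,\mathcal{R}_B^k)$ are all ``expressible as $L^2$-quantities.'' This is not true, and the paper's argument hinges precisely on what you omit.

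First, the coefficients in the good unknowns cannot be the same for $\qc$ and for $u$. From Lemma~\ref{Prop_AGU}, $D_t^8\p_j f = \p_j D_t^8 f - \p_j(D_t^7u_i\p_i f) - 8\,\p_j u_i\,\p_i D_t^7 f - Z_j^8 f$. For the divergence $D_t^8\nab\cdot u$, the two dangerous pieces $(\p_j D_t^7 u_i)(\p_i u_j)$ and $8\,\p_j u_i\,\p_i D_t^7 u_j$ coincide after relabeling $i\leftrightarrow j$, so they combine and are absorbed by $\UU^8:=D_t^8u-9\,D_t^7u\cdot\nab u$. For the gradient $D_t^8\nab\qc$ there is no such index symmetry: with $\QQ^8:=D_t^8\qc - D_t^7u\cdot\nab\qc$ (coefficient $1$, not $9$) one is left with the residual $-8\,\nab u_i\,\p_i D_t^7\qc$, which is \emph{not} $L^2$-controllable since $\p D_t^7\qc$ lies outside the energy. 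The paper also does not introduce a good unknown for $B$ at all; instead it rewrites $D_t^8(B\cdot\nab B)=\rho\,D_t^8(\rho^{-1}B\cdot\nab B)$ and uses Proposition~\ref{Prop_Com} to commute, leaving $(B\cdot\nab)D_t^8B$ plus a controllable commutator with $\rho$.

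Second, and more seriously, even with the correct good unknowns the energy identity for $k=8$ produces two extra terms, $\mathcal{G}_5'=8\eps^{15}\int_\Om \UU^8\cdot\nab u_i\,\p_i D_t^7\qc\,\dx$ and $\mathcal{G}_6'=-9\eps^{16}\int_\Om (B\cdot\nab)(D_t^7u_i\p_i u)\cdot D_t^8 B\,\dx$, which cannot be bounded by $P(E(t))$ directly. The paper closes these by (i) invoking the momentum equation to replace $\p_i D_t^7\qc$ by $\eps\rho D_t^8 u_i$ and $\eps(B\cdot\nab)D_t^7B_i$, (ii) integrating $(B\cdot\nab)$ by parts and repeatedly using $[D_t,\rho^{-1}B\cdot\nab]=0$ to push derivatives onto $\rho^{-1}B$, and (iii) integrating one $D_t$ by parts in time, which produces a time-boundary term handled by Young's inequality and the Reynolds transport Lemma~\ref{lem transport}. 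This chain of manipulations is the heart of the $k=8$ estimate and is entirely absent from your sketch. Without it the claim that $\ddt\|\eps^8(\QQ^8,\UU^8,\mathbf{B}^8)\|_0^2\le P(E(t))$ fails.
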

		
		The proof of this proposition is divided into two parts: When $1\leq k\leq 7$, one can prove the tangential estimates by straightforward calculations; when $k=8$, the anisotropy of the function spaces leads to potential loss of regularity in the commutator $[\nab, D_t^8]$ as the terms having the form $\nab D_t^7 f~(f=u,q)$ must appear.
		\subsubsection{Low-order estimates for $\leq 7$ material derivatives}
		
			For $1\le k\le  7$,  we take $D_t^k$ to the equations \eqref{CMHD4} to obtain that
			\begin{equation}\label{Equ_AGU}
				\begin{aligned}
					&a D_t^{k+1}\qc -\eps a B\cdot D_t^{k+1} B +\eps^{-1}\nab\cdot D_t^k u= \RR_q^k,\\
					&\rho D_t^{k+1} u + \eps^{-1}\nab D_t^k q -B\cdot\nab D_t^k B =\RR_u^k,\\
					& D_t^{k+1}B  - B\cdot\nab D_t^k u -\eps a B ( D_t^{k+1}\qc -\eps B\cdot D_t^{k+1}B )=\RR_B^k,
				\end{aligned}
			\end{equation}
			where
			\begin{equation*}
				\begin{aligned}
					\RR_q^k=&~ [a , D_t^k] D_t \qc-\eps[a B , D_t^k] \cdot D_t B+ \eps^{-1}[\p_j, D_t^k]u_j,\\
					\RR_u^k=&~ [\rho , D_t^k] D_t u +\eps^{-1}[\nab, D_t^k]q -[B\cdot\nab,D_t^k] B ,\\
					\RR_B^k=&~ -[B\cdot\nab, D_t^k]u - \eps [a B, D_t^k ] D_t\qc + \eps^2 [a B B_j, D_t^k ] D_t B_j   .
				\end{aligned}
			\end{equation*}
			Multiplying \eqref{Equ_AGU} by $\varepsilon^{2k}D_t^k(\qc,u, B)$, integrating over $\Omega$ and integrating by parts, one sees that
			\begin{align}
				&\frac{\varepsilon^{2k}}{2} \ddt\int_{\Omega}\left( a|D_t^k \qc-B\cdot D_t^k B|^2 +\rho|D_t^k u|^2  +|D_t^k B|^2\right) \dx \notag\\
				=&~\frac{\varepsilon^{2k}}{2}\int_{\Omega}\left( (\partial_t a+ \nabla\cdot(a u))|D_t^k \qc-B\cdot D_t^k B|^2 +\nabla\cdot u|D_t^k B|^2  \right)\dx\notag\\
				&~-\varepsilon^{2k}\int_{\Omega} a (\eps D_t B \cdot D_t^k B)( D_t^k\qc-B\cdot D_t^k B ) \dd x\notag\\
				&~+\varepsilon^{2k}\int_{\Omega} (   (B\cdot\nab) D_t^k B    )\cdot D_t^k u + ((B\cdot\nabla)D_t^k u)\cdot D_t^k B  \dx \notag\\
				&~+\varepsilon^{2k}\int_{\Omega} \left( D_t^k \qc \RR_q^k + D_t^k u\cdot \RR_u^k+ D_t^k B\cdot \RR_B^k \right)\dx\notag\\
				:=&~\mathcal{G}_1+\mathcal{G}_2+\mathcal{G}_3+\mathcal{G}_4,\label{energy estimate_Tan4}
			\end{align}
			where the equation $\partial_t \rho+ \nabla\cdot(\rho u)=0$ and the boundary condition $u_3=B_3=0$ on $\Sigma$ are used. We now have to control each term on the right-hand side of (\ref{energy estimate_Tan4}). Since $a$ is smooth funtions of $\varepsilon q$ and $S$, we employ the Sobolev inequality to deduce that
			\begin{align*}
				\|\partial_t a\|_{\infty}=\|\eps\p_q\rho \p_t q - \p_S \rho (u\cdot\nabla) S\|_{\infty}\le &~ P(E(t)),\\
				\| \nabla\cdot(a u)\|_{\infty}+\|\nabla\cdot u \|_{\infty}\le&~ P(E(t)).
			\end{align*}
			Thus, it is easy to find that $\mathcal{G}_1$ can be bounded from above by
			\begin{align}\label{Estimate_G1}
				|\mathcal{G}_1|\lesssim    \|\partial_t a +\nabla\cdot(a u)\|_{\infty}\| \eps^k(D_t^k\qc-B\cdot D_t^k B)\|_0^2 +\|\nabla\cdot u \|_{\infty} \|\eps^k D_t^k B\|_0^2 \le P(E(t)).
			\end{align}
			Similarly, we control $\mathcal{G}_2$ as follows:
			\begin{equation}
				|\mathcal{G}_2|\lesssim \|\eps D_t B\|_{\infty} \|\eps^k D_t^k B\|_0 \|\eps^k(D_t^k \qc- B\cdot D_t^k B)\|_0 \le P(E(t)).
			\end{equation}

			For $\mathcal{G}_3$, integrating by parts and using the boundary conditions $u_3=B_3=0$ on $\Sigma$ and $\nabla\cdot B=0$, one obtains that
			\begin{equation}
				\mathcal{G}_3=\eps^{2k}\int_{\Omega}  -((B\cdot\nabla)D_t^k u)\cdot  D_t^k B+ ((B\cdot\nabla)D_t^k u)\cdot  D_t^k B  \dx=0.
			\end{equation}

			Recall that Lemma \ref{Prop_AGU} indicates that the highest-order terms in the commutator $\eps^k[\nab, D_t^k]f$  have the form of either $(\eps^{k-1}\p D_t^{k-1}u)(\eps\p f)$ or $(\eps\p u)(\eps^{k-1}\p D_t^{k-1}f)$ whose $L^2(\Om)$ norms can be directly controlled by $P(E(t))$ thanks to $k\leq 7$. By standard product estimates, it is straightforward to see that the $L^2$-norm of terms in $\mathcal{G}_4$ can be controlled by $P(E(t))$, i.e.
			\begin{equation}\label{Estimate_G4}
				\mathcal{G}_4 \le P(E(t)).
			\end{equation}
		 Substituting the estimates for $\mathcal{G}_1$ - $\mathcal{G}_4$ into \eqref{energy estimate_Tan4}, we obtain that
			\begin{align}
				\ddt \left\|\eps^k D_t^k(\qc,u, B)\right\|_0^2 \le P(E(t)),\quad 1\le k\le 7.
			\end{align}
		By the definition of $\qc$, it is easy to verify that
		\begin{equation}
			\|\eps^k D_t^k q\|_0^2 \le P(E(0)) +\int_0^t P(E(\tau)) \dd \tau,\quad 1\le k\le 7.
		\end{equation}
			
			\subsubsection{Estimates of 8 material derivatives}
			
			Now we consider the estimates of the rest of highest order tangential derivatives $D_t^8$. In such case, the commutator $\eps^8[\nab,D_t^8]f$ contains the terms $(\eps^7\p D_t^7u)(\eps\p f)$ and $(\eps\p u)(\eps^7\p D_t^7f)$ in which the $\p D_t^7(\cdot)$ part is not controllable in $L^2(\Om)$. Such possible loss of regularity is completely caused by the anisotropy of the function spaces, as the $\p$ may be a normal derivative which is considered to be second-order. To overcome this difficulty, we shall introduce the ``modified Alinhac good unknowns" in \cite[Section 5.5]{Zhang2021CMHD} by the third author. To be more precise, let
			\begin{equation}
				\QQ^8= D_t^8 \qc- D_t^{7} u_i\p_i \qc,\quad \UU^8= D_t^8 u-9 D_t^{7} u_i\p_i u,
			\end{equation}
			respectively be the Alinhac good unknowns of $q$ and $u$. Therefore, by Lemma \ref{Prop_AGU} and Proposition \ref{Prop_Com}, we have that
			\begin{align}
				D_t^8 \nab \qc=&~  \nab \QQ^8 -8\nab u_i\p_i D_t^{7} \qc- Z^8 \qc,\\
				D_t^8 \nab \cdot u=& ~ \nab\cdot  D_t^8 u - \nab\cdot( D_t^7 u_i\p_i u ) -8 \p_j u_i\p_i D_t^7 u_j - Z_j^8 u_j\notag\\
				=&~ \nab\cdot\UU^8 +8\p_j (\nab\cdot u) D_t^{7}u_j- Z^8_j  u_j,\\
				D_t^8(B\cdot \nab B)=D_t^8(\rho\times\rho^{-1}B\cdot \nab B)=&~ [D_t^8,\rho](\rho^{-1}B\cdot\nab B) + (B\cdot\nab)(D_t^8 B),\\
				D_t^8(B\cdot \nab u)=D_t^8(\rho\times\rho^{-1}B\cdot \nab u)=&~ [D_t^8,\rho](\rho^{-1}B\cdot\nab u) + B\cdot\nab \UU^8 +9(B\cdot\nab)(D_t^7 u_i\p_i u),
			\end{align}where $Z^k=(Z_1^k,Z_2^k,Z_3^k)^\mathrm{T}$ represents the low-order terms in the commutator and its precise definition is recorded in Lemma \ref{Prop_AGU}.
			We take $D_t^8$ to the equations \eqref{CMHD4} to obtain that
			\begin{equation}\label{Equ_AGU8}
				\begin{aligned}
					&a D_t \QQ^8 -\eps a B\cdot D_t^9 B +\eps^{-1}\nab\cdot \UU^8= \RR_q^8,\\
					&\rho D_t \UU^8 + \eps^{-1}\nab \QQ^8 -(B\cdot\nab)(D_t^8 B)  =\eps^{-1} 8 \nab u_i\p_i D_t^7 \qc + \RR_u^8,\\
					& D_t^9 B  - B\cdot\nab \UU^8 -\eps a B ( D_t \QQ^8 -\eps B\cdot D_t^9 B )=-9(B\cdot\nab)(D_t^7 u_i\p_i u)+\RR_B^8,
				\end{aligned}
			\end{equation}
			where
			\begin{equation*}
				\begin{aligned}
					\RR_q^8=&~ [a , D_t^8] D_t \qc- a D_t( D_t^{7} u_i\p_i \qc )-\eps[a B , D_t^8] \cdot D_t B+ \eps^{-1}(   Z_j^8 u_j -8\p_j(
					\nab\cdot u) D_t^7 u_j),\\
					\RR_u^8=&~ [\rho , D_t^8] D_t u - 9\rho D_t( D_t^7 u_i\p_i u )+ \eps^{-1}Z^8 \qc - [D_t^8,\rho](\rho^{-1} B\cdot\nab B),\\
					\RR_B^8=&~ -[D_t^8,\rho](\rho^{-1} B\cdot\nab u)  - \eps [a B, D_t^8 ] D_t\qc + \eps a B D_t(D_t^{7} u_i\p_i \qc )  + \eps^2 [a B B_j, D_t^8 ] D_t B_j  .
				\end{aligned}
			\end{equation*}
			Multiplying \eqref{Equ_AGU} by $\varepsilon^{16}(\QQ^8,\UU^8,D_t^8 B)$, integrating over $\Omega$ and integrating by parts, one can see that
			\begin{align}
				&\frac{\varepsilon^{16}}{2} \ddt\int_{\Omega}\left( a|\QQ^8-B\cdot D_t^8 B|^2 +\rho|\UU^8|^2  +|D_t^8 B|^2\right) \dx \notag\\
				=&~\frac{\varepsilon^{16}}{2}\int_{\Omega}\left( (\partial_t a+ \nabla\cdot(a u))|\QQ^8-B\cdot D_t^8 B|^2 +\nabla\cdot u| D_t^8 B|^2  \right)\dx\notag\\
				&~-\varepsilon^{16}\int_{\Omega} a (\eps D_t B \cdot D_t^8 B)( \QQ^8-B\cdot D_t^8 B ) \dd x\notag\\
				&~+\varepsilon^{16}\int_{\Omega} (   (B\cdot\nab) D_t^8 B    )\cdot \UU^8 + ((B\cdot\nabla)\UU^8)\cdot D_t^8 B  \dx \notag\\
				&~+\varepsilon^{16}\int_{\Omega} \left( \QQ^8 \RR_q^8 + \UU^8\cdot \RR_u^8+ D_t^8 B\cdot \RR_B^8\cdot D_t^8 B \right)\dx\notag\\
				&~ + 8\eps^{15}\int_{\Omega} \UU^8 \cdot \nab u_i\,\p_i D_t^7 \qc \dd x \notag\\
				&~ - 9\eps^{16}\int_{\Omega} (B\cdot\nab)(D_t^7 u_i\p_i u) \cdot D_t^8 B \dd x \notag\\
				:=&~\mathcal{G}_1'+\mathcal{G}_2'+\mathcal{G}_3'+\mathcal{G}_4'+\mathcal{G}_5'+\mathcal{G}_6'.\label{energy estimate_Tan8}
			\end{align}
			Since the estimates for $\mathcal{G}_i'$ $(i=1,2,3,4)$ can be obtained using methods similar those in \eqref{Estimate_G1} -- \eqref{Estimate_G4}, we omit the details of the proof and directly present the following conclusions:
			\begin{equation*}
				\mathcal{G}_3'=0,\quad \mathcal{G}_1'+\mathcal{G}_2'+\mathcal{G}_4' \le P(E(t)).
			\end{equation*}
			
			Next, we control the terms $\mathcal{G}_5'$ and $\mathcal{G}_6'$ which cannot be directly controlled by $P(E(t))$ due to the appearance of $\p D_t^7$. By invoking the momentum equation
			\begin{equation*}
				-\eps^{-1}\nab \qc= \rho D_t u - B\cdot \nab B,
			\end{equation*}
			and the magnetic field equation
			\begin{equation*}
				D_t(\rho^{-1}B)= \rho^{-1}B\cdot \nab u,
			\end{equation*}
			$\mathcal{G}_5'$ is rewritten as follows:
			\begin{align}
				\mathcal{G}_5' =&~8\eps^{15}\int_{\Omega} \UU^8 \cdot\nab u_i \, D_t^7 \p_i \qc \dx \underbrace{+ 8\eps^{15}\int_{\Omega} \UU^8 \cdot\nab u_i\, [\p_i, D_t^7] \qc \dx}_{:=\mathcal{G}_{51}'}\notag\\
				=&~8\eps^{16}\int_{\Omega} \UU^8 \cdot\nab u_i \, D_t^7 (B\cdot \nab B_i) \dx \underbrace{-8\eps^{16}\int_{\Omega} \UU^8 \cdot\nab u_i \, D_t^7 (\rho D_t u_i) \dx }_{:=\mathcal{G}_{52}'} + \mathcal{G}_{51}'\notag\\
				=&~8\eps^{16}\int_{\Omega} \UU^8 \cdot\nab u_i \, (B\cdot \nab )(D_t^7 B_i) \dx \underbrace{+8\eps^{16}\int_{\Omega} \UU^8 \cdot\nab u_i \, [D_t^7,\rho] (\rho^{-1}B\cdot\nab B_i) \dx }_{:=\mathcal{G}_{53}'} + \mathcal{G}_{51}'+\mathcal{G}_{52}'
				\end{align}Then we integrate by parts in $\bp$, insert the concrete for of the Alinhac good unknown $\mathbf{U}^8$, use $D_t(\rho^{-1}B)=\rbp u$ and $[D_t,\rbp]=0$ to get
				\begin{align}
				\mathcal{G}_5' =&~-8\eps^{16}\int_{\Omega} (B\cdot \nab)( D_t^8 u_j- 9 D_t^7 u_k\p_k u_j )  \p_j u_i D_t^7 B_i \dx \underbrace{-8\eps^{16}\int_{\Omega} \UU^8_j (B\cdot \nab)(\p_j u_i )   D_t^7 B_i \dx}_{:=\mathcal{G}_{54}'}\notag\\
				&~+ \mathcal{G}_{51}'+\mathcal{G}_{52}'+\mathcal{G}_{53}' \notag\\
				=&~-8\eps^{16}\int_{\Omega} \rho D_t^8 (\rho^{-1} B\cdot\nab u_j) \p_j u_i D_t^7 B_i \dd x + 72 \eps^{16}\int_{\Omega} \rho D_t^7(\rho^{-1} B\cdot\nab u_k) \p_k u_j \p_j u_i D_t^7 B_i \dd x \notag\\
				& ~\underbrace{+   72 \eps^{16}\int_{\Omega} (B\cdot\nab)(\p_k u_j) D_t^7 u_k \p_j u_i D_t^7 B_i \dd x}_{:=\mathcal{G}_{55}'} +\sum_{\ell=1}^{4}\mathcal{G}_{5\ell}'\notag\\
				=&~-8\eps^{16}\int_{\Omega} \p_j u_i\, D_t^9 (\rho^{-1}B_j)\,  D_t^7 B_i \dd x + \underbrace{72 \eps^{16}\int_{\Omega} \rho\p_k u_j\,\p_j u_i\, D_t^8(\rho^{-1} B_k)  D_t^7 B_i \dd x}_{:=\mathcal{G}_{56}'} +\sum_{\ell=1}^{5}\mathcal{G}_{5\ell}'.
				\end{align}
				We then integrate $D_t$ by parts to get
				\begin{align}
				\mathcal{G}_5' =&~ \underbrace{- 8 \eps^{16} \ddt \int_{\Om} \p_j u_i\, D_t^8 (\rho^{-1} B_j)\, D_t^7 B_i \dd x}_{:=\mathcal{G}_5''} \notag\\
				&\underbrace{+8\eps^{16} \int_{\Om} \p_j u_i\, D_t^8 (\rho^{-1} B_j)\,D_t^8 B_i \dd x +8\eps^{16} \int_{\Om} D_t(\p_j u_i) \,D_t^8 (\rho^{-1} B_j) \,D_t^7 B_i \dd x }_{:=\mathcal{G}_{57}'}+\sum_{\ell=1}^{6}\mathcal{G}_{5\ell}'.
			\end{align}
			 By utilizing standard product estimates, it is straightforward to check that
			\begin{equation}
				\sum_{\ell=1}^{7}\mathcal{G}_{5\ell}'\le P(E(t)),
			\end{equation}
			and 
			\begin{align}
				\int_0^t \mathcal{G}_{5}''(\tau) \dd \tau =&~ - \left(8 \eps^{16}  \int_{\Om} \p_j u_i D_t^8 (\rho^{-1} B_j) D_t^7 B_i \dd x\right) \Big|_0^t \notag\\
				\le&~ P(E(0)) +\eta\|\eps^8 D_t^8 (\rho^{-1} B_j)\|_0^2 + C_{\eta} \eps^2 \|\p_j u_i\|_{\infty}^2\|\eps^7 D_t^7 B_i\|_0^2  \notag\\
				\lesssim &~ P(E(0)) +\eta\|\eps^8 D_t^8 (\rho^{-1} B)\|_0^2 +\eps^2 \|u\|_{3}^2\|\eps^7 D_t^7 B\|_0^2  ,\quad \eta<1/2.
			\end{align} The last term is controlled with the help of Jensen's inequality and the Reynolds transport formula (Lemma \ref{lem transport})
			\begin{equation}
			\begin{aligned}
			&~~~ \varepsilon^2\|u\|_3^2\left\|\varepsilon^7 D_t^{7} B\right\|_0^2 \\
			& \lesssim\|u\|_3^2\left(\varepsilon^2\left\|\varepsilon^7 D_t^7 B(0, \cdot)\right\|_0^2+\int_0^t\left\|\left(\varepsilon D_t\right)^8 B(\tau, \cdot)\right\|_0^2 \dtau\right) \\
			&\lesssim \varepsilon^2\|u\|_3^2 E_7(0)+E_4(t) \int_0^t P\left(E_7(\tau)\right) \dtau \text {. } \\
			& \lesssim E_7(0)\left(\varepsilon^2\left\|u_0\right\|_3^2+\int_0^t \| \varepsilon D_t u(\tau, \cdot)\|_3^2 \dtau\right)+E_4(t) \int_0^t P\left(E_7(\tau)\right) \dtau\\
			& \lesssim P(E(0))+\int_0^t P\left(E_4(\tau)\right) \dtau+E_4(t) \int_0^t P\left(E_7(\tau)\right) \dtau.
			\end{aligned}
			\end{equation} Also the term $\|\eps^8 D_t^8 (\rho^{-1} B)\|_0^2$ can be converted to $\|\eps^8 D_t^8  B\|_0^2$ plus controllable terms
			\[
			\|\eps^8 D_t^8 (\rho^{-1} B)\|_0^2 \lesssim \|\eps^8 D_t^8 B\|_0^2  + P(E(0))+ E(t)\int_0^t P(E(\tau))\dtau.
			\]In fact, the leading-order term in $\eps^8[D_t^8,\rho^{-1}]B$ is $(\eps D_t)^8(\rho^{-1}) B$. Since $D_t(\rho^{-1})=-\rho^{-2}D_t\rho=-\rho a \eps D_t q$ (recall $D_t S=0$) produces one more $\eps$ weight, we can move this extra weight to $B$ and control this term similarly as that of $\varepsilon^2\|u\|_3^2\left\|\varepsilon^7 D_t^{7} B\right\|_0^2$.
			
			Therefore, $\mathcal{G}_5'$ is controlled as following:
			\begin{equation}
				\int_{0}^{t}  \mathcal{G}_5'(\tau) \dd \tau \le \eta\|\eps^8 D_t^8 B\|_0^2+ P(E(0))  +E(t)\int_0^t P(E(\tau)) \dd \tau,\quad \eta<1/2.
			\end{equation}
			
			Similarly, one obtains that
			\begin{align}
				\mathcal{G}_6'=&~- 9\eps^{16}\int_{\Omega} \rho D_t^7(\rho^{-1} B\cdot\nab u_i)\p_i u\cdot D_t^8 B \dd x - 9\eps^{16}\int_{\Omega}   \left(( B\cdot\nab)(\p_i u ) D_t^7 u_i + [\rho, D_t^7](\rho^{-1}B\cdot\nab u_i) \p_i u  \right)\cdot D_t^8 B \dd x\notag\\
				\le& ~ - 9\eps^{16}\int_{\Omega} \rho D_t^8(\rho^{-1} B_i)\p_i u\cdot D_t^8 B \dd x +P(E(t))\le P(E(t)).
			\end{align}
			
			Substituting the estimates for $\mathcal{G}_1'$ - $\mathcal{G}_6'$ into \eqref{energy estimate_Tan8}, we obtain that
			\begin{align}
				\|\eps^8 (\QQ^8,\UU^8,D_t^8 B)\|_0^2 \le \eta\|\eps^8 D_t^8 B\|_0^2+ P(E(0)) + E(t)\int_0^t P(E(\tau)) \dd \tau.
			\end{align}
			The control of $(\QQ^8,\UU^8)$ immediately leads to the control of $(D_t^8 q, D_t^8 u)$. For example, we have
			\begin{align*}
			&\|\eps^8 D_t^8 q\|_0^2\lesssim\|\eps^8\QQ^8\|_0^2+\|(\eps^7 D_t^7 u)(\eps\p q)\|_0^2\\
			\lesssim&~\|\eps^8\QQ^8\|_0^2+\|\eps^7 D_t^7 u\|_0^2\left(\|\eps q_0\|_{3}^2+\int_0^t \|\eps D_t q(\tau,\cdot)\|_{3}^2\dtau\right)\\
			\lesssim&~P(E(0))+\int_0^t P(E(\tau)) \dd \tau,
			\end{align*}where we note that the control of $\eps^7 D_t^7 u$ is also used in the above inequality. Similar estimate also holds for $u$, so we conclude that
			\begin{equation}
				\|\eps^8 D_t^8(q,u,B)\|_0^2 \lesssim \eta\|\eps^8 D_t^8 B\|_0^2 + P(E(0)) + E(t)\int_0^t P(E(\tau)) \dd \tau,\q\q\forall\eta\in(0,\frac12).
			\end{equation} Choosing $\eta>0$ suitably small such that the $\eta$-term is absorbed by the left side, we obtain that the desired estimates and the proof of Proposition \ref{Lem_TangentialEstimates} is completed.	
		
		\subsection{Reduction of normal derivatives}\label{sect reduce}
		This part is devoted to the reduction of normal derivatives. Invoking Lemma \ref{hodgeTT}, we obtain the following reduction for $E_4(t)$:
		\begin{align}
			\|(\eps D_t)^k u\|_{4-k}^2\lesssim &~ \|(\eps D_t)^k u\|_0^2+ \|\nab\times(\eps D_t)^k u\|_{3-k}^2 + \|\nab\cdot(\eps D_t)^k u\|_{3-k}^2\notag\\
			\lesssim&~ \|(\eps D_t)^k u\|_0^2 + \|(\eps D_t)^k\nab\times u\|_{3-k}^2 + \|(\eps D_t)^k\nab\cdot u\|_{3-k}^2 \notag\\
			\label{divcurl E4u}&+ \|[(\eps D_t)^k,\nab\times] u\|_{3-k}^2 +\|[(\eps D_t)^k,\nab\cdot] u\|_{3-k}^2, \\
			\|(\eps D_t)^k B\|_{4-k}^2\lesssim & ~\|(\eps D_t)^k B\|_0^2 + \|\nab\times(\eps D_t)^k B\|_{3-k}^2 + \|\nab\cdot (\eps D_t )^k B\|_{3-k}^2\notag\\
			\lesssim&~ \|(\eps D_t)^k B\|_0^2 + \|(\eps D_t)^k\nab\times B\|_{3-k}^2 \notag\\
			\label{divcurl E4B}&+ \|[(\eps D_t)^k,\nab\times] B\|_{3-k}^2 +\|[(\eps D_t)^k,\nab\cdot] B\|_{3-k}^2,
		\end{align} where the boundary part $|(\eps D_t)^k u\cdot N|_{3.5-k}^2$ vanishes thanks to the slip condition $u_3|_{\Sigma}=0$.
		Similarly, for $E_5(t), E_6(t)$ and $E_7(t)$, we have for $1\leq l\leq 3$ and $0\leq k\leq 3-l$ that
		\begin{align}
			\|(\eps D_t)^{k+2l} u\|_{4-k-l}^2\lesssim&~ \|(\eps D_t)^{k+2l} u\|_0^2 + \|(\eps D_t)^{k+2l}\nab\times u\|_{3-k-l}^2 + \|(\eps D_t)^{k+2l}\nab\cdot u\|_{3-k-l}^2 \notag\\
			\label{divcurl E5u}&+ \|[(\eps D_t)^{k+2l},\nab\times] u\|_{3-k-l}^2 +\|[(\eps D_t)^{k+2l},\nab\cdot] u\|_{3-k-l}^2, \\
			\|(\eps D_t)^{k+2l} B\|_{4-k-l}^2\lesssim&~ \|(\eps D_t)^{k+2l} B\|_0^2 + \|(\eps D_t)^{k+2l}\nab\times B\|_{3-k-l}^2  \notag\\
			\label{divcurl E5B}&+ \|[(\eps D_t)^{k+2l},\nab\times] B\|_{3-k-l}^2 +\|[(\eps D_t)^{k+2l},\nab\cdot] B\|_{3-k-l}^2.
		\end{align} 
		In view of the above inequalities, the $L^2$ norms $\|(\eps D_t)^{k+2l} u\|_0^2$ have been controlled in Proposition \ref{Lem_TangentialEstimates}. In what follows, we control the divergence part together with the pressure $q$ in Section \ref{sect reduction q}, the estimates of voriticty and current in Section \ref{sect curl}. The commutators in \eqref{divcurl E4u}-\eqref{divcurl E5B} can be directly controlled and their estimates are recorded at the end of this section and are proved in Lemma \ref{lem prod comm}. Do note that these commutators are not under any time integrals, and so we cannot merely control them by $P(E(t))$ as in Section \ref{sect tg}.

		\subsubsection{Estimates of vorticity and current}\label{sect curl}
		We start with the control of vorticity $\|(\eps D_t)^{k+2l}\nab\times u\|_{3-k-l}^2$ and current $\|(\eps D_t)^{k+2l}\nab\times B\|_{3-k-l}^2$ for $0\leq l\leq 3$ and $0\leq k\leq 3-l$. We first prove the following estimates.
		\begin{lem}\label{lem curl 0}Define $\rho_0(S):=\rho(0,S)$. Then under the assumption of Theorem \ref{main thm, ill data}, for each $l\in\{0,1,2,3\}$ and $0\leq k\leq 3-l$, we have
		\begin{align}
		&\ddt\left(\left\|(\eps D_t)^{k+2l}\nab\times(\rho_0 u)\right\|_{3-k-l}^2+\left\|(\eps D_t)^{k+2l}\nab\times(\rho_0 B)\right\|_{3-k-l}^2+\left\|\eps B\times\left((\eps D_t)^{k+2l}\nab\times(\rho_0 B)\right)\right\|_{3-k-l}^2\right)\notag\\
		\lesssim&~P\left(\sum_{k=0}^{l} E_{4+k}(t)\right) + E_{4+l+1}(t).
		\end{align}
		\end{lem}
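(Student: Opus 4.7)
The plan is to reproduce the structural argument outlined in Section~\ref{sect stat 1}. First I would derive evolution equations for $\rho_0 u$ and $\rho_0 B$ that avoid $O(1)$-size gradients of $\rho$. Since $D_t S=0$ and $\rho_0=\rho(0,S)$, we have $D_t\rho_0=0$, so multiplying the momentum equation by $\rho_0/\rho$ yields
\[
D_t(\rho_0 u)-\rho_0(\rho^{-1}B\cdot\nab)B=-\nab\bigl(\eps^{-1}q+\tfrac12|B|^2\bigr)+\frac{\rho-\rho_0}{\rho}\nab\bigl(\eps^{-1}q+\tfrac12|B|^2\bigr),
\]
where $(\rho-\rho_0)/\rho=O(\eps)$. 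Similarly, using $\nab\cdot u=-\eps a D_t q$, one gets
\[
D_t(\rho_0 B)-\rho_0(B\cdot\nab)u=\eps\, a\rho_0 B\, D_t q.
\]
Then I would apply $\p^{3-k-l}(\eps D_t)^{k+2l}\nab\times$ to both equations and test the first against $\p^{3-k-l}(\eps D_t)^{k+2l}\nab\times(\rho_0 u)$ and the second against $\p^{3-k-l}(\eps D_t)^{k+2l}\nab\times(\rho_0 B)$, summing the resulting energy identities.

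The curl-free source on the right of the momentum equation disappears; the $(\rho-\rho_0)/\rho$ correction has an extra $\eps$ and, together with the commutators between $\nab\times$, $\p^{3-k-l}$ and $(\eps D_t)^{k+2l}$, is handled by standard product estimates and contributes to $P\bigl(\sum_{k\le l}E_{4+k}(t)\bigr)$. The pivotal cancellation comes from the Lorentz pair: after integrating $(B\cdot\nab)$ by parts (using $B_3|_{\Sigma}=0$ and $\nab\cdot B=0$), the cross term $\int \nab\times(\rho_0\rho^{-1}B\cdot\nab B)\cdot \nab\times(\rho_0 u)$ cancels the analogous cross term $\int \nab\times(\rho_0 B\cdot\nab u)\cdot\nab\times(\rho_0 B)$ modulo lower-order commutators.

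The remaining obstruction is the term generated by the divergence $\nab\cdot u=-\eps a D_t q$, namely
\[
\mathcal{K}:=-\int_\Om \p^{3-k-l}(\eps D_t)^{k+2l}\nab\times(\rho_0 B)\cdot \p^{3-k-l}(\eps D_t)^{k+2l}\nab\times\bigl(\rho_0\rho^{-1}B\,\nab\cdot u\bigr)\dx,
\]
which has one derivative too many for a straight $H^{4-l}$ estimate. Following Section~\ref{sect stat 1}, I would substitute $\nab\cdot u=-\eps aD_t q$, commute $\nab\times$ past $D_t$, and then invoke the momentum equation $-\nab q=\eps(\rho D_t u+B\times(\nab\times B))$ to reduce $\mathcal{K}$ to
\[
-\eps^2 a\rho^{-1}B\times\bigl[B\times D_t\bigl(\p^{3-k-l}(\eps D_t)^{k+2l}\nab\times(\rho_0 B)\bigr)\bigr]+\eps^2\rho_0 aB\times \bigl(\p^{3-k-l}(\eps D_t)^{k+2l+2}u\bigr)+\text{l.o.t.}
\]
Testing and applying Reynolds transport (Lemma~\ref{lem transport}), the first piece integrates to the perfect derivative $-\tfrac12\ddt\|\eps B\times((\eps D_t)^{k+2l}\nab\times(\rho_0 B))\|_{3-k-l}^2$, which is exactly the third term in the energy functional; the second piece trades one spatial derivative for two material derivatives, producing precisely the term bounded by $E_{4+l+1}(t)$ on the right-hand side.

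The main obstacle is the entropy-dependent piece $\nab\rho_0=\rho'(0,S)\nab S$ that appears repeatedly when $\nab\times$ hits the factors $\rho_0$. As highlighted in Section~\ref{sect stat 2}, expanding $\nab\times((\rho^{-1}B\cdot\nab\rho_0)B)$ na\"ively would demand $H^{4-l}$-regularity of $\rho^{-1}B\cdot\nab S$, which is one order above what a generic transport estimate gives. This is supplied by Proposition~\ref{Prop_Com} and Corollary~\ref{Cor_Entropy}: since $[D_t,\rho^{-1}B\cdot\nab]=0$, the directional derivative $\rho^{-1}B\cdot\nab S$ satisfies $D_t(\rho^{-1}B\cdot\nab S)=0$ and is therefore propagated at the enhanced regularity level already included in $E_{4+l}(t)$. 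With this observation, every remaining commutator and coefficient term is routinely estimated by product rules and Sobolev embedding, yielding the stated bound.
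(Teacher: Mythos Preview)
Your proposal identifies all the essential mechanisms of the paper's proof: the $\rho\to\rho_0$ substitution, the handling of the critical term $\mathcal{K}$ via the chain $\nab\cdot u\to-\eps aD_tq\to$ momentum equation to produce the Lorentz energy and the $E_{4+l+1}$ contribution, and the reliance on Corollary~\ref{Cor_Entropy} for the terms involving $\rho^{-1}B\cdot\nab\rho_0$. In substance this is the paper's argument.

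There is, however, a real (if easily repaired) gap in your organization. You propose to test the two evolution equations symmetrically and claim the cross terms cancel after integrating by parts in $B\cdot\nab$. But the momentum equation produces $\rho_0\rho^{-1}(B\cdot\nab)B$ while your $B$-equation produces $\rho_0(B\cdot\nab)u$: the highest-order pieces after IBP are $\int\rho^{-1}(B\cdot\nab)Y\cdot X$ and $\int(B\cdot\nab)X\cdot Y$ with $X=\p^{3-k-l}(\eps D_t)^{k+2l}\nab\times(\rho_0 u)$, $Y=\p^{3-k-l}(\eps D_t)^{k+2l}\nab\times(\rho_0 B)$. Their sum leaves the residual $\int(1-\rho^{-1})(B\cdot\nab)X\cdot Y$, which is top order and, since $\rho_0(S)\not\equiv 1$ in the non-isentropic setting, is not $O(\eps)$ and not directly controllable by $E_{4+l}$. (This is also why your $\mathcal{K}$ carries the coefficient $\rho_0\rho^{-1}$, which is inconsistent with your $B$-equation whose right side is $\eps a\rho_0 BD_tq$.) The paper avoids this mismatch by \emph{not} running two separate energy identities: it follows the single cross term from the momentum equation through IBP in $\rho^{-1}B\cdot\nab$, then inserts the $B$-evolution equation pointwise to convert $\rho_0\rho^{-1}(B\cdot\nab)u$ into $\rho^{-1}D_t(\rho_0 B)+\rho_0\rho^{-1}B\nab\cdot u$, producing $-\tfrac12\ddt\int\rho^{-1}|\p^{3-k-l}(\eps D_t)^{k+2l}\nab\times(\rho_0 B)|^2$ with the $\rho^{-1}$ weight and the $\mathcal{K}$ term in the form you wrote. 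Your scheme works if you test the $B$-equation against $\rho^{-1}\p^{3-k-l}(\eps D_t)^{k+2l}\nab\times(\rho_0 B)$ instead; with that weight the cancellation is exact and the rest of your outline goes through.
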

		\begin{rmk}
		It should be noted that the loss of Mach number weights occurs if we analyze the curl part via the evolution equation of vorticity $\nab\times u$, because the spatial derivative of $\rho$ is still of $O(1)$ size with respect to the Mach number $\eps$ and $D_t u$ has $O(1/\eps)$ size. Motivated by \cite{Metivier2001limit}, we rewrite the curl part as 
		\begin{equation}
			\nab\times u= \rho_0^{-1} ( \nab\times(\rho_0 u) -\nab\rho_0 \times u ), \quad \nab\times B= \rho_0^{-1} ( \nab\times(\rho_0 B) -\nab\rho_0 \times B), \quad \rho_0:=\rho(0,S).
		\end{equation} 
		In Lemma \ref{lem curl}, we reproduce the estimates for $\nab\times u$ and $\nab\times B$ by combining the above equality and Lemma \ref{lem curl 0}. The advantange of this substitution is that $\rho_0$ commutes with $D_t$ and so we no longer have the singular term. In fact, the momentum equation $\rho D_t u +\eps^{-1}\nabla q+B\times (\nab\times B) =0  $ can be written as
			\begin{equation}\label{MomentumEqu2}
				D_t(\rho_0 u) -\rho_0 \rho^{-1} B\cdot\nab B=-\nab( \eps^{-1} q+ |B|^2/2 ) + \frac{\rho-\rho_0}{\rho} \nab( \eps^{-1} q + |B|^2/2).
			\end{equation}
			There exists a smooth function $g$ such that
			\[
			\frac{\rho-\rho_0}{\rho}=\eps g(\eps q,S),\quad \sum_{k=0}^{4}\|(\eps D_t)^k g\|_{4-k} \le P(E_4(t)).
			\]
			First, we take $\nab\times$ in \eqref{MomentumEqu2} to get the evolution equation
			\begin{align}\label{curlv eq}
				D_t( \nabla\times(\rho_0 u) ) -\nabla\times\left( \rho_0\rho^{-1} B\cdot\nabla B \right) =[u\cdot\nab,\nabla\times](\rho_0 u) + \nabla g \times\nabla (q+ \eps|B|^2/2),
			\end{align}where we notice that the right side only contains first-order derivatives and does not lose $\eps$ weights. 
		\end{rmk}

%
		We first prove the case when $k=0$. Indeed, the cases for $k=1,2,3$ follow in the same manner and the analysis will be recorded in Lemma \ref{lem_curl_H5} and Lemma \ref{lem_curl_H6}. 
		
		\begin{lem}\label{lem_curl_H4}
			Under the assumptions of Theorem \ref{main thm, ill data}, we have
			\begin{equation}
				\sum_{k=0}^{3}\ddt \left(\|(\eps D_t)^k\nabla\times(\rho_0 u)\|_{3-k}^2 +\|(\eps D_t)^k\nabla\times(\rho_0 B)\|_{3-k}^2 +\left\|\eps B\times\left((\eps D_t)^k\nab\times (\rho_0 B)\right)\right\|_{3-k}^2 \right) \lesssim P(E_4(t))+E_5(t).
			\end{equation}
		\end{lem}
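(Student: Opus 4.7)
The approach is to perform $L^2$ energy estimates on evolution equations for the curl quantities. The evolution of $\nab\times(\rho_0 u)$ is already given in \eqref{curlv eq}. For $\nab\times(\rho_0 B)$, since $D_t\rho_0=\rho_S(0,S)D_tS=0$, I would apply $\nab\times$ to $D_t(\rho_0 B)=\rho_0 B\cdot\nab u-\rho_0 B(\nab\cdot u)$ to obtain
\begin{equation*}
D_t(\nab\times(\rho_0 B))=\nab\times(\rho_0 B\cdot\nab u)-\nab\times(\rho_0 B(\nab\cdot u))+[u\cdot\nab,\nab\times](\rho_0 B).
\end{equation*}
For each $k\in\{0,1,2,3\}$, I apply $\p^{3-k}(\eps D_t)^k$ to both equations and test them in $L^2(\Om)$ against $\p^{3-k}(\eps D_t)^k\nab\times(\rho_0 u)$ and $\p^{3-k}(\eps D_t)^k\nab\times(\rho_0 B)$ respectively. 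Integrating $B\cdot\nab$ by parts (using $\nab\cdot B=0$ in $\Om$ and $B_3|_\Sigma=0$) yields the anti-symmetric cancellation of the two leading-order Lorentz coupling terms arising from $\nab\times(\rho_0\rho^{-1}B\cdot\nab B)$ and $\nab\times(\rho_0 B\cdot\nab u)$, modulo commutators controllable by $P(E_4(t))$ via product estimates.

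The principal obstacle is the ``fifth-order'' term
\begin{equation*}
\mathcal{K}:=-\int_\Om\p^{3-k}(\eps D_t)^k\nab\times\bigl(\rho_0\rho^{-1}B(\nab\cdot u)\bigr)\cdot\p^{3-k}(\eps D_t)^k\nab\times(\rho_0 B)\dx,
\end{equation*}
which arises from the $-\rho_0 B(\nab\cdot u)$ source. Following the scheme outlined in Section \ref{sect stat 1}, I would substitute the continuity equation $\nab\cdot u=-\eps a D_t q$, commute $\nab\times$ past $D_t$, and then invoke the momentum equation $-\nab q=\eps(\rho D_t u+B\times(\nab\times B))$ to convert the remaining spatial derivative of $q$ into an $\eps D_t u$ plus a Lorentz contribution. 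The leading part of the resulting integrand becomes
\begin{equation*}
-\eps^2 a\rho^{-1} B\times\bigl[B\times D_t\bigl(\p^{3-k}(\eps D_t)^k\nab\times(\rho_0 B)\bigr)\bigr]+\eps^2\rho_0 a B\times\bigl(\p^{3-k}(\eps D_t)^{k+2}u\bigr)+\text{(lower order)}.
\end{equation*}
Moving $D_t$ by integration by parts onto the other factor in the first piece produces exactly $-\tfrac12\ddt\int_\Om a\rho^{-1}\bigl|\eps B\times\bigl(\p^{3-k}(\eps D_t)^k\nab\times(\rho_0 B)\bigr)\bigr|^2\dx$, which supplies the third energy contribution in the lemma. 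The second piece carries two extra material derivatives against one fewer normal derivative, so after Cauchy--Schwarz it is directly bounded by $E_5(t)$.

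The remaining contributions, including the transport commutator $[u\cdot\nab,\nab\times]$, the curl-free correction $\nab g\times\nab(q+\eps|B|^2/2)$ with $\frac{\rho-\rho_0}{\rho}=\eps g(\eps q,S)$, and the high-order commutators $[\p^{3-k}(\eps D_t)^k,\nab\times]$ and $[\p^{3-k}(\eps D_t)^k,B\cdot\nab]$, all involve at most four effective anisotropic derivatives and are controlled by $P(E_4(t))$ using standard product estimates together with the embedding $H^2(\Om)\hookrightarrow L^\infty(\Om)$. Crucially, the coefficient $\rho_0$ depending only on $S$ satisfies $D_t\rho_0=0$, so it does not generate $O(\eps^{-1})$ contributions under $D_t$, which avoids the loss of Mach number weights that would otherwise appear if one worked directly with $\nab\times u$ and $\nab\times B$. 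The enhanced directional regularity $\rho^{-1}B\cdot\nab S\in H^4$ (Corollary \ref{Cor_Entropy}) is used to bound the underlined commutator terms noted in Section \ref{sect stat 2}. Summing over $k\in\{0,1,2,3\}$ then closes the estimate.
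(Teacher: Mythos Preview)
Your proposal captures all the key mechanisms of the paper's argument: the $\rho_0$-weighted curl evolution, the resolution of the fifth-order term $\mathcal{K}$ by substituting the continuity and momentum equations, the emergence of the Lorentz energy $\|\eps B\times(\cdot)\|^2$, the $E_5$ contribution from $(\eps D_t)^{k+2}u$, and the use of the enhanced directional regularity of $S$. There is, however, one point where your description would not close as written.

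You propose to test the evolution equations for $\nab\times(\rho_0 u)$ and $\nab\times(\rho_0 B)$ \emph{separately}, each against its own (unweighted) curl, and then cancel the leading Lorentz couplings by integrating $B\cdot\nab$ by parts. But the Lorentz term in the $u$-equation carries the coefficient $\rho_0\rho^{-1}$ (from \eqref{MomentumEqu2}), while in the $B$-equation it carries only $\rho_0$. After IBP the residual is
\[
\int_\Om (1-\rho^{-1})\,(B\cdot\nab)\bigl(\p^\alpha\nab\times(\rho_0 u)\bigr)\cdot \p^\alpha\nab\times(\rho_0 B)\,\dx,
\]
which is a genuine fifth-order term not controlled by $P(E_4)$, since $1-\rho^{-1}$ depends on $S$ and is $O(1)$ in $\eps$. (Your own formula for $\mathcal{K}$ already carries the factor $\rho_0\rho^{-1}$, which the separate $B$-equation does not produce---this is the inconsistency.) The paper sidesteps this by never writing the $B$-curl evolution separately: it starts from the $u$-estimate alone, integrates $\rho^{-1}B\cdot\nab$ by parts, and then substitutes $B\cdot\nab u=D_tB+B\nab\cdot u$ \emph{inside} the resulting integrand. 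The $D_tB$ piece yields $-\tfrac12\ddt\int\rho^{-1}|\p^\alpha\nab\times(\rho_0 B)|^2\,\dx$ with the natural weight $\rho^{-1}$, and the $B\nab\cdot u$ piece yields precisely your $\mathcal{K}$ with the coefficient $\rho_0\rho^{-1}$. Equivalently, your scheme works if you test the $B$-curl equation against $\rho^{-1}\p^\alpha\nab\times(\rho_0 B)$. One further remark: the step you describe as ``moving $D_t$ by integration by parts'' to obtain the Lorentz energy is actually the vector identity $(B\times v)\cdot w=-(B\times w)\cdot v$, which the paper applies with $w=\p^\alpha\nab\times(\rho_0 B)$ and $v=D_t(B\times w)$ to turn the integrand into a perfect $D_t$-derivative of $|B\times w|^2$.
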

		\begin{proof}
			
			We first treat the case $k=0$. Taking $\p^3$ in \eqref{curlv eq}, we get
			\begin{align}\label{curlv H3 eq}
				D_t(\p^3 \nabla\times(\rho_0 u) ) -\p^3\nabla\times\left( \rho_0 \rho^{-1} B\cdot\nabla B \right)=   \underbrace{\p^3(\text{RHS of }\eqref{curlv eq})+[u\cdot\nab,\p^3](\nab\times (\rho_0 u)) }_{:=\RR_1},
			\end{align}
			where the order of derivatives on the right side does not exceed 4. We start with the estimates of vorticity by
			\begin{align}
				&\frac{1}{2}\ddt \int_{\Omega} |\p^3\nab\times (\rho_0 u)|^2 \dx=\int_{\Omega} (\p_t \p^3 \nab\times (\rho_0 u) )\cdot \p^3 \nab\times (\rho_0 u) \dx  \notag\\
				=&\int_{\Om} D_t \p^3 \nab\times (\rho_0 u) \cdot\p^3\nab\times (\rho_0 u) \dx \underbrace{- \int_{\Om}(u\cdot\nab) \p^3\nab\times (\rho_0 u)\cdot \p^3
					\nab \times (\rho_0 u) \dx }_{:=\mathcal{I}_1}.
			\end{align}
			Then invoking \eqref{curlv H3 eq} gives us
			\begin{align}
				&\int_{\Om}D_t \p^3 \nab\times (\rho_0 u) \cdot\p^3\nab\times (\rho_0 u) \dx \notag\\
				=&~\int_{\Om} \p^3 \nabla\times\left( \rho_0\rho^{-1} B\cdot\nabla B \right) \cdot \p^3 \nab\times (\rho_0 u) \dx  \underbrace{+ \int_{\Om}  \RR_1\cdot\p^3 \nab\times (\rho_0 u)\dx }_{:=\mathcal{I}_2}\notag\\
				=&~\int_{\Om}\left(\rho^{-1}(B\cdot\nab) \p^3 \nab\times (\rho_0 B)\right)\cdot\p^3 \nab\times (\rho_0 u) \dx +\mathcal{I}_2 \notag\\
				&\underbrace{+\int_{\Om} \left( \p^3( \nab(\rho^{-1} B_i)\times\p_i(\rho_0 B) ) + [\p^3,\rho^{-1}B\cdot\nab] \nab\times(\rho_0 B)   \right)\cdot\p^3 \nab\times (\rho_0 u) \dx}_{:=\mathcal{I}_3}  \notag\\
				&\underbrace{-\int_{\Om} \p^3\nabla\times\left(  (\rho^{-1} B\cdot\nab\rho_0) B  \right)\cdot\p^3 \nab\times (\rho_0 u) \dx}_{:=\mathcal{I}_4}.
				\end{align}
				Integrating by parts in $\int_{\Om}\left(\rho^{-1}(B\cdot\nab) \p^3 \nab\times (\rho_0 B)\right)\cdot\p^3 \nab\times (\rho_0 u) \dx$, we obtain
				\begin{align}
				&\int_{\Om}\left(\rho^{-1}(B\cdot\nab) \p^3 \nab\times (\rho_0 B)\right)\cdot\p^3 \nab\times (\rho_0 u) \dx\notag\\
				\overset{B\cdot\nab}{=}&-\int_{\Om}\p^3\nab\times (\rho_0 B) \cdot \p^3\nab\times ((\rho^{-1}B\cdot\nab) (\rho_0u)) \dx\notag\\
				& \underbrace{- \int_{\Om}(\p^3 \nab\times (\rho_0 B)) \cdot  \left( [\rho^{-1}B\cdot\nab, \p^3]\nab\times(\rho_0 u)- \p^3(\nab(\rho^{-1}B_i)\times \p_i(\rho_0 u) ) +\nabla\cdot (\rho^{-1}B)  \p^3 \nabla\times (\rho_0 u)   \right) \dx}_{:=\mathcal{I}_5}\notag\\
				=&-\int_{\Om}(\p^3\nab\times (\rho_0 B)) \cdot \p^3\nab\times ( \rho_0\rho^{-1} B\cdot\nab u ) \dx +\mathcal{I}_5\notag\\
				&\underbrace{-\int_{\Om}  \p^3\nab\times (\rho_0 B) \cdot \p^3\nabla\times\left( (\rho^{-1}B\cdot\nab\rho_0)u  \right) \dx }_{:=\mathcal{I}_6}.
			\end{align}
			Inserting the evolution equation of $B$ to substitute $B\cdot\nab u$, we get
			\begin{align}
				&-\int_{\Om}(\p^3\nab\times (\rho_0 B)) \cdot \p^3\nab\times (\rho_0 \rho^{-1}B\cdot \nab u) \dx \notag \\
				=&-\int_{\Om}\p^3 \nab\times (\rho_0 B) \cdot \p^3 \nab\times (\rho_0\rho^{-1} D_t B) \dx -\int_{\Om}\p^3 \nab\times (\rho_0 B) \cdot \p^3 \nab\times (\rho_0\rho^{-1} B\nab\cdot u) \dx  \notag\\
				=&-\frac{1}{2}\ddt\int_{\Omega} \rho^{-1}|\p^3\nab\times (\rho_0 B)|^2 \dx \underbrace{+\frac{1}{2} \int_{\Om} ( D_t\rho^{-1} + \rho^{-1}\nab\cdot u )|\p^3\nab\times (\rho_0 B)|^2\dx}_{:=\mathcal{I}_7}\notag\\
				&\underbrace{- \int_{\Om} \p^3\nab\times (\rho_0 B) \cdot \left( \nab\times( [\p^3,\rho^{-1} D_t](\rho_0 B)  ) +\nab\rho^{-1}\times\p_t\p^3(\rho_0 B) +\nab(\rho^{-1}u_i)\p_i\p^3(\rho_0 B)  \right) \dx}_{:=\mathcal{I}_8}\notag\\
				&\underbrace{+\int_{\Om}\p^3\nab\times (\rho_0 B) \cdot \p^3\nab\times ( \eps\rho_0\rho^{-1}a B D_t q) \dx}_{:=\mathcal{K}}.
			\end{align}
			Based on the concrete forms of the commutators in Lemma \ref{Prop_AGU}, a straightforward product estimate for $\mathcal{I}_j$ $(j=1,2,3,5,7)$ gives us
			\begin{align}
				\mathcal{I}_1 + \mathcal{I}_2 + \mathcal{I}_3+ \mathcal{I}_5+ \mathcal{I}_7 \le P(E_4(t)).
			\end{align}
			In $\mathcal{I}_4$ and $\mathcal{I}_6$, we find that there are fourth-order derivatives falling on $\rho^{-1}B\cdot\nab \rho_0$. Since $\rho_0=\rho(0,S)$ only depends on $S$, we can use the enhanced ``directional" regularity of $S$ (Corollary \ref{Cor_Entropy}) to control such terms
			\begin{align}
			\mathcal{I}_4+ \mathcal{I}_6 \le P(E_4(t)).
			\end{align}
			For the terms in $\mathcal{I}_8$, there exhibits a time derivative without $\eps$ weight. In fact, this time derivative can be replaced with spatial derivatives by using the equations for the magnetic field and entropy. For example, we obtain that
			\begin{align}
				&~\|\nab\times([\p^3, \rho^{-1}D_t](\rho_0 B)) \|_0 \notag\\
				\lesssim&~ \| \p_j \rho^{-1} \p_t(\rho_0  B)  \|_3 + \|\p_j (\rho^{-1}u_k )\p_k(\rho_0  B) \|_3\notag\\
				\lesssim&~ \| (\eps\p_q \rho^{-1} \p_j q  + \p_S \rho^{-1} \p_j S) \cdot(\rho_0' B \p_t S + \rho_0 \p_t B   ) \|_3 + P(E_4(t))\notag\\
				\lesssim&~ \|\rho_0' B   \p_S \rho^{-1} \p_j S (u\cdot\nab S ) \|_3+\| \rho_0  \p_S \rho^{-1} \p_j S (-u\cdot\nab B+B\cdot \nab u -B\nab\cdot  u ) \|_3+ P(E_4(t))\notag\\
				\le&~ P(E_4(t)),
			\end{align}
			which leads to
			\begin{equation}
				\mathcal{I}_8 \le P(E_4(t)).
			\end{equation}
			
			The crucial term is $\mathcal{K}$ in which the highest-order term has 5-th order derivative and thus cannot be controlled by $E_4(t)$. We first pick up the highest-order part in $\mathcal{K}$:
			\begin{align*}
				\p^3\nab\times(\eps\rho_0\rho^{-1} a B D_t q)=&~\eps\p^3((\nab D_t q)\times (\rho_0\rho^{-1} a B)+D_t q \nab\times (\rho_0\rho^{-1} a B))\\
				=&-\eps \rho_0\rho^{-1} a B\times (\p^3\nab D_t q)\underbrace{-\eps[\p^3, \rho_0\rho^{-1}a B]\times(\nab D_t q)+\eps\p^3(D_t q\nab\times (\rho_0\rho^{-1} a B))}_{:=\RR_2},
			\end{align*}
			where straightforward calculation shows that $\|\RR_2\|_0\leq P(E_4(t))$. For the fifth-order part, the vector identity $(B\cdot \nab)B-\frac12\nab|B|^2=-B\times(\nab\times B)$ motivates us to rewrite the momentum equation as
			\[
			\rho D_t u+B\times(\nab\times B)=-\eps^{-1}\nab q.
			\] So, we commute $D_t$ with $\nab$ and plug this equation into $-\eps \rho_0\rho^{-1} a B\times (\p^3\nab D_t q)$ to get 
			\begin{align}
				&-\eps\rho_0\rho^{-1}a B\times (\p^3\nab D_t q)\notag\\
				=&~-\eps\rho_0\rho^{-1}a B\times(\p^3D_t\nab q)\underbrace{-\eps\rho_0\rho^{-1}a B\times(\p^3(\nab u_j\p_j q))}_{:=\mathcal{R}_3}\notag\\
				= &~\eps^2\rho_0\rho^{-1} a B\times\p^3 D_t (\rho D_t u)+\eps^2\rho_0\rho^{-1} a  B\times \p^3 D_t(B\times(\nab\times B))+\mathcal{R}_3\notag\\
				= &~\eps^2\rho_0 a B\times( \p^3 D_t^2 u) +\underbrace{\eps^2\rho^{-1} a B\times D_t(B\times\p^3 (\nab\times (\rho_0 B)))}_{=\mathcal{K}_1}\notag\\
				&\underbrace{+\eps^2\rho_0\rho^{-1} a B\times\left([\p^3 D_t,\rho] D_t u
					+[\p^3, D_t](\nab\times B)+D_t\left([\p^3,B]\times(\nab\times B) \right) +D_t \left( [\rho_0,\p^3\nab ]\times B\right)\right)}_{:=\RR_4}+\RR_3,
			\end{align} 
			where $\|\RR_3+\RR_4\|_0\leq P(E_4(t))$ can be proved by using the concrete forms of commutators because the order of derivatives does not exceed 4. It should be noted that the time derivative in $D_t\left([\p^3,B\times](\nab\times B)\right)$ does not lead to any loss of $\eps$ weights because there is already an $\eps$ factor in $\RR_3$. Similar argument also applies to the control of $\RR_4$. Now, we analyze the contribution of $\mathcal{K}_1$ in the integral $\mathcal{K}$, presented as below
			\begin{align}
				\io \eps^2\rho^{-1} a (\p^3\nab\times (\rho_0 B)) \cdot \left(B\times D_t\left(B\times\p^3 \nab\times (\rho_0 B)\right)\right)\dx.
			\end{align} We set $\bd{u}=B$, $\bd{v}=D_t\left(B\times\p^3 \nab\times (\rho_0 B)\right)$ and $\bd{w}=\p^3(\nab\times (\rho_0 B))$ in the vector identity $(\bd{u}\times \bd{v})\cdot\bd{w}=-(\bd{u}\times \bd{w})\cdot\bd{v}$ to get
			\begin{align}
				&\io \eps^2\rho^{-1} a (\p^3\nab\times (\rho_0 B)) \cdot \left(B\times D_t\left(B\times\p^3 \nab\times (\rho_0 B)\right)\right)\dx\notag\\
				=& -\io\eps^2\rho^{-1} a D_t\left(B\times(\p^3 \nab\times (\rho_0 B))\right) \cdot \left(B\times(\p^3\nab\times (\rho_0 B))\right)\dx \notag\\
				=&-\frac12\ddt\io \eps^2\rho^{-1}a\left|B\times(\p^3 \nab\times (\rho_0 B))\right|^2\dx +\underbrace{\frac12\io\eps^2(\rho^{-1} a\nab\cdot u+D_t (\rho^{-1}a))\left|B\times(\p^3 \nab\times (\rho_0 B))\right|^2\dx}_{:=\RR_5}.
			\end{align}  The term $\RR_5$ is directly controlled by $ P(E_4(t))$. Therefore, we get the uniform control for $\mathcal{K}$ by using both $E_4$ and $E_5$:
			\begin{align}
				\mathcal{K} + \frac12\ddt\io \eps^2\rho^{-1}a\left|B\times(\p^3 \nab\times (\rho_0 B))\right|^2\dx  \le \|B\|_4\left(P(E_4(t))+\|\rho_0 a B\|_{\infty}\|\eps^2 D_t^2 u\|_3\right)\le P(E_4(t))+E_5(t),
			\end{align}
			which gives us the energy estimate for the case of $k=0$
			\begin{align}
				\ddt \left(\|\nab\times (\rho_0 u)\|_3^2 +\|\nab\times (\rho_0 B)\|_3^2+\left\|\eps B\times(\nab\times (\rho_0 B))\right\|_3^2\right)\leq P(E_4(t))+E_5(t).
			\end{align}

			Similarly, we can prove the same conclusion for $\p^{\alpha}D_t^k$ with $k+|\alpha|=3$ by replacing $\p^3$ with $\p^{\alpha}(\eps D_t)^k$. Indeed, the highest order of derivatives in the above commutators does not exceed 4-th order, and there is no loss of Mach number weight because none of the above steps creates negative power of Mach number. The fifth-order term can also be analyzed in the same way:  
			\[
			\eps \rho_0\rho^{-1} a B\times (\p^\alpha(\eps D_t)^k\nab D_t q)\lleq -\rho_0 a  B\times\p^{\alpha}(\eps D_t)^{k+2} u-\eps^2 \rho^{-1} a B\times D_t\left(B\times(\p^{\alpha}(\eps D_t)^k \nab\times (\rho_0 B))\right),
			\]
			where the $L^2$ norms of the omitted lower-order terms are controlled by $P(E_4(t))$. The contribution of the last term in the above equality is again the energy term $-\frac12\ddt\io  \eps^2\rho^{-1} a\left|B\times(\p^\alpha(\eps D_t)^k \nab\times B)\right|^2\dx$ if we replace $\p^3$ above by $\p^{\alpha}(\eps D_t)^k$. Hence, we obtain the following estimates for vorticity and current in $E_4(t)$
			\begin{align}
				\sum_{k=0}^{3}\ddt \left(\left\|(\eps D_t)^k\nab\times (\rho_0 u)\right\|_{3-k}^2 +\left\|(\eps D_t)^k\nab\times(\rho_0 B)\right\|_{3-k}^2+\left\|\eps B\times\left((\eps D_t)^k\nab\times B\right) \right\|_{3-k}^2\right) \leq P(E_4(t))+E_5(t).
			\end{align}
		
		\end{proof}

		Next, we turn to prove Lemma \ref{lem curl 0} for $1\leq l\leq 3$. The proof is parallel to Lemma \ref{lem_curl_H4} and we mostly focus on the case of $l=1$.
		\begin{lem}\label{lem_curl_H5}
			It holds that
			\begin{equation}
			\begin{aligned}
				&\sum_{k=0}^{2}\ddt\left( \left\|(\eps D_t)^{k+2}\nab\times  (\rho_0 u)\right\|_{2-k}^2 + \left\|(\eps D_t)^{k+2}\nab\times  (\rho_0 B)\right\|_{2-k}^2 +  \left\|\eps B\times\left((\eps D_t)^{k+2}\nab\times  (\rho_0 B)\right)\right\|_{2-k}^2 \right)\\
				\le&~ P(E_4(t),E_5(t))+E_6(t).
			\end{aligned}
			\end{equation}
		\end{lem}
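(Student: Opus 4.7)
The plan is to mimic the proof of Lemma \ref{lem_curl_H4} verbatim, replacing the differential operator $\p^3$ by $\p^\alpha(\eps D_t)^{k+2}$ with $k+|\alpha|=2$, and reusing the modified vorticity equation \eqref{curlv eq}. After applying $\p^\alpha(\eps D_t)^{k+2}$ to \eqref{curlv eq} and pairing in $L^2$ against itself, I would substitute the magnetic field equation to convert the $B\cdot\nab u$ coupling into an evolution of $\|\p^\alpha(\eps D_t)^{k+2}\nab\times(\rho_0 B)\|_0^2$, while the continuity equation $\nab\cdot u = -\eps a D_t q$ produces the critical higher-order term---exactly the analogue of $\mathcal{K}$ from the $l=0$ case.

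For this critical term
\[
\mathcal{K}^{(l=1)} := \int_\Om \p^\alpha(\eps D_t)^{k+2}\nab\times(\rho_0 B)\cdot\p^\alpha(\eps D_t)^{k+2}\nab\times\bigl(\eps\rho_0\rho^{-1}a B D_t q\bigr)\dx,
\]
I would extract the top-order factor $-\eps\rho_0\rho^{-1}a B\times \p^\alpha(\eps D_t)^{k+2}\nab D_t q$, commute $D_t$ with $\nab$, and insert the rewritten momentum equation $-\eps^{-1}\nab q = \rho D_t u + B\times(\nab\times B)$. This decomposes $\mathcal{K}^{(l=1)}$ into a term of the form $\eps^2\rho_0 a B\times\p^\alpha(\eps D_t)^{k+4}u$ whose $L^2$-pairing against $\p^\alpha(\eps D_t)^{k+2}\nab\times(\rho_0 B)$ contributes at most $P(E_4(t),E_5(t))+E_6(t)$, since $\|\p^\alpha(\eps D_t)^{k+4}u\|_0$ sits inside $E_6(t)^{1/2}$ for every admissible $(k,\alpha)$ with $k+|\alpha|=2$; and a Lorentz-type piece which, via the identity $(B\times v)\cdot w = -(B\times w)\cdot v$ and integration of $D_t$ by parts, yields the energy contribution
\[
-\frac12\ddt\int_\Om\eps^2\rho^{-1} a\,\bigl|B\times\p^\alpha(\eps D_t)^{k+2}\nab\times(\rho_0 B)\bigr|^2\dx,
\]
accounting for the third term in the statement.

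The main obstacle is controlling the commutators $[\p^\alpha(\eps D_t)^{k+2},\,\cdot\,]$ that appear throughout the substitutions (against $\nab\times$, against $B\cdot\nab$, and inside the analogues of $\mathcal{I}_1$--$\mathcal{I}_8$), because they now contain up to $k+2$ material derivatives rather than none. Using Lemma \ref{Prop_AGU} to enumerate monomials, I would verify that the worst distribution places one factor in anisotropic order at most $5$ with the remaining factors in $L^\infty$, so that the embedding $H_*^8(\Om)\hookrightarrow H^4(\Om)$ absorbs everything into $P(E_4(t),E_5(t))$. Two sub-obstacles deserve care: (i) terms in which high-order derivatives fall on $\rho_0$, arising from the analogues of $\mathcal{I}_4$ and $\mathcal{I}_6$ and from within commutators, must be handled using the enhanced regularity of $\rho^{-1}B\cdot\nab S$ from Corollary \ref{Cor_Entropy}; (ii) time derivatives without $\eps$ weight that appear in commutators with $\rho^{-1}D_t$ (the analogue of $\mathcal{I}_8$) must be converted to spatial derivatives via the evolution equations for $B$ and $S$, where the extra $\eps$ factor produced by $D_t\rho^{-1}= -\rho a\,\eps D_t q$ is crucial to prevent loss of Mach number weight.
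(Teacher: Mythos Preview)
Your proposal is correct and follows essentially the same route as the paper's proof: differentiate \eqref{curlv eq} by $\p^{\alpha}(\eps D_t)^{k+2}$ with $k+|\alpha|=2$, run the $L^2$ pairing, integrate $B\cdot\nab$ by parts, substitute the magnetic and continuity equations, and resolve the critical $\mathcal{K}$-term via the momentum equation and the triple-product identity to extract the energy $-\tfrac12\ddt\int\eps^2\rho^{-1}a|B\times\p^{\alpha}(\eps D_t)^{k+2}\nab\times(\rho_0 B)|^2\dx$ plus a residual paired against $\p^{\alpha}(\eps D_t)^{k+4}u$, which lands in $E_6(t)$.

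Two small remarks. First, your leading residual should read $\rho_0 a\,B\times\p^{\alpha}(\eps D_t)^{k+4}u$ with no extra $\eps^2$ in front: the $\eps$ from $\nab\cdot u=-\eps aD_tq$ and the $\eps$ from $-\nab q=\eps(\rho D_t u+\cdots)$ are exactly absorbed in promoting $(\eps D_t)^{k+2}D_t^2$ to $(\eps D_t)^{k+4}$; your conclusion is unaffected since $\|\p^{\alpha}(\eps D_t)^{k+4}u\|_0^2\le E_6(t)$ either way. Second, among the commutators you flag as ``to be controlled,'' the one against the directional derivative is handled in the paper not by estimation but by Proposition~\ref{Prop_Com}: since $[D_t,\rho^{-1}B\cdot\nab]=0$, the commutator $[(\eps D_t)^{k+2},\rho^{-1}B\cdot\nab]$ acting on $\nab\times(\rho_0 u)$ or $\nab\times(\rho_0 B)$ vanishes identically, so no bound is needed there.
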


		\begin{proof} For the case of $k=0$, we take $\p^2(\eps D_t)^2$ in \eqref{curlv eq} to get 
			\begin{align}\label{curlv H5 eq}
				&D_t(\p^2(\eps D_t)^2 \nabla\times(\rho_0 u) ) -\p^2(\eps D_t)^2\nabla\times\left( \rho_0 \rho^{-1} B\cdot\nabla B \right)\notag\\
				=&   \underbrace{\p^2(\eps D_t)^2(\text{RHS of }\eqref{curlv eq})+[u\cdot\nab,\p^2]((\eps D_t)^2\nab\times (\rho_0 u)) }_{:=\RR_1'}.
			\end{align}
			Now, standard $L^2$-type estimate yields that
			\begin{align}
				&\frac{1}{2}\ddt \int_{\Omega} |\p^2(\eps D_t)^2\nab\times (\rho_0 u)|^2 \dx\notag\\
				=&\int_{\Omega} (\p_t \p^2(\eps D_t)^2 \nab\times (\rho_0 u) )\cdot \p^2 (\eps D_t)^2 \nab\times (\rho_0 u) \dx  \notag\\
				=&\int_{\Om} D_t \p^2(\eps D_t)^2 \nab\times (\rho_0 u) \cdot \p^2(\eps D_t)^2\nab\times (\rho_0 u) \dx \underbrace{- \int_{\Om}(u\cdot\nab)  \p^2(\eps D_t)^2\nab\times (\rho_0 u)\cdot  \p^2(\eps D_t)^2
					\nab \times (\rho_0 u) \dx }_{:=\mathcal{I}_1'}.
			\end{align}
			
			Then invoking \eqref{curlv H5 eq} give us the following
			\begin{align}
				&\int_{\Om}D_t  \p^2(\eps D_t)^2 \nab\times (\rho_0 u) \cdot \p^2(\eps D_t)^2\nab\times (\rho_0 u) \dx \notag\\
				=&\int_{\Om}  \p^2(\eps D_t)^2 \nabla\times\left( \rho_0\rho^{-1} B\cdot\nabla B \right) \cdot  \p^2(\eps D_t)^2 \nab\times (\rho_0 u) \dx  \underbrace{+ \int_{\Om}  \RR_1'\cdot \p^2(\eps D_t)^2 \nab\times (\rho_0 u)\dx }_{:=\mathcal{I}_2'}\notag\\
				=&\int_{\Om}   \p^2(\eps D_t)^2 \nab\times (\rho_0 u) \cdot \p^2(\eps D_t)^2 \left( (\rho^{-1}B\cdot\nab) \nab\times (\rho_0 B)  \right)\dx\notag\\ 
				&\underbrace{-\int_{\Om}   \p^2(\eps D_t)^2 \nab\times (\rho_0 u) \cdot \p^2(\eps D_t)^2 \nab\times\left(  (  \rho^{-1}B\cdot\nab \rho_0)B  \right)\dx}_{:=\mathcal{I}_3'}\notag\\ 
				&\underbrace{+\int_{\Om}   \p^2(\eps D_t)^2 \nab\times (\rho_0 u) \cdot\p^2(\eps D_t)^2 \left(\nab(\rho^{-1} B_i)\times\p_i(\rho_0 B)\right)\dx}_{:=\mathcal{I}_4'}  +\mathcal{I}_2'\notag\\ 
				=&  \int_{\Om}   \p^2(\eps D_t)^2 \nab\times (\rho_0 u) \cdot (\rho^{-1}B\cdot\nab)( \p^2 (\eps D_t)^2 \nab\times(\rho_0 B) ) \dx  \notag\\
				&\underbrace{+  \int_{\Om}   \p^2(\eps D_t)^2 \nab\times (\rho_0 u) \cdot [\p^2,\rho^{-1}B\cdot\nab](  (\eps D_t)^2 \nab\times(\rho_0 B) ) \dx }_{:=\mathcal{I}_5'} \notag\\
				&+  \int_{\Om}   \p^2(\eps D_t)^2 \nab\times (\rho_0 u) \cdot \underbrace{\p^2 \left( [(\eps D_t)^2,\rho^{-1}B\cdot\nab] \nab\times(\rho_0 B) \right)}_{=0}  \dx +\sum_{j=1}^{4} \mathcal{I}_j',
			\end{align}
			where we have used the fact $[\rho^{-1}B\cdot\nab,D_t]=0$. Then we integrate $B\cdot\nab$ by parts and invoke the evolution equation of $B$ to get 
			\begin{align}
				&  \int_{\Om}   \p^2(\eps D_t)^2 \nab\times (\rho_0 u) \cdot \left[(\rho^{-1}B\cdot\nab)( \p^2 (\eps D_t)^2 \nab\times(\rho_0 B) )\right] \dx  \notag\\
				=&-\int_{\Om}\p^2(\eps D_t)^2\nab\times (\rho_0 B) \cdot \left[\p^2(\eps D_t)^2\nab\times ((\rho^{-1}B\cdot\nab) (\rho_0u))\right] \dx  \notag\\
				&\underbrace{- \int_{\Om}\p^2(\eps D_t)^2\nab\times (\rho_0 B) \cdot  \left(  [\rho^{-1}B\cdot\nab,\p^2]( (\eps D_t)^2\nab\times (\rho_0 B) ) - \p^2 (\eps D_t)^2\left(\nab(\rho^{-1}B_i)\times\p_i(\rho_0 u)\right)\right) \dx }_{:=\mathcal{I}_5'}\notag\\
				&- \int_{\Om}\p^2(\eps D_t)^2\nab\times (\rho_0 B) \cdot \underbrace{\p^2 \left(  [\rho^{-1}B\cdot\nab,(\eps D_t)^2] \nab\times(\rho_0 u) \right)}_{=0} \dx \notag\\
				& \underbrace{- \int_{\Om}\p^2(\eps D_t)^2\nab\times (\rho_0 B) \cdot  \left( \nabla\cdot (\rho^{-1}B)  \p^2 (\eps D_t)^2 \nabla\times (\rho_0 u)   \right) \dx}_{:=\mathcal{I}_6'}\notag\\
				=&-\int_{\Om}\p^2(\eps D_t)^2\nab\times (\rho_0 B)\cdot \p^2(\eps D_t)^2\nab\times ( \rho_0\rho^{-1} B\cdot\nab u ) \dx +\mathcal{I}_5' +\mathcal{I}_6'\notag\\
				&\underbrace{-\int_{\Om}  \p^2(\eps D_t)^2\nab\times (\rho_0 B) \cdot \p^2(\eps D_t)^2\nab\times\left((\rho^{-1}B\cdot\nab \rho_0)u\right) \dx }_{:=\mathcal{I}_7'},\notag\\
				=&-\frac{1}{2} \int_{\Om} \rho^{-1}\left|  \p^2(\eps D_t)^2\nab\times (\rho_0 B)   \right|^2 \dx \underbrace{+\frac{1}{2} \int_{\Om} ( \rho^{-1}\nab\cdot u + D_t\rho^{-1} )\left|  \p^2(\eps D_t)^2\nab\times (\rho_0 B)   \right|^2 \dx}_{:=\mathcal{I}_8'}\notag\\
				&\underbrace{-\int_{\Om}\p^2(\eps D_t)^2\nab\times (\rho_0 B)\cdot\p^2(\eps D_t)^2 \left(\nab\rho^{-1}\times \p_t(\rho_0 B)+ \nab(\rho^{-1} u_i)\times\p_i(\rho_0 B)\right) \dx}_{:=\mathcal{I}_9'}\notag\\
				&\underbrace{-\int_{\Om}\p^2(\eps D_t)^2\nab\times (\rho_0 B)\cdot  [\p^2(\eps D_t)^2,\rho^{-1}D_t] \nab\times (\rho_0 B)\dx}_{:=\mathcal{I}_{10}'}\notag\\
				&\underbrace{+\int_{\Om}\p^2(\eps D_t)^2\nab\times (\rho_0 B)\cdot \p^2(\eps D_t)^2\nab\times ( \eps \rho_0\rho^{-1}a B D_t q) \dx}_{:=\mathcal{K}'}
			\end{align}
			A straightforward product estimate for $\mathcal{I}_j'$ $(1\le j\le 10)$ gives us
			\begin{equation}
				\sum_{j=1}^{10}\mathcal{I}_j'\le P(E_4(t),E_5(t)),
			\end{equation}where the control of $\mathcal{I}_3'$ and $\mathcal{I}_7'$ again requires the enhanced regularity of $S$ in the direction of $B/\rho$ as shown in Corollary \ref{Cor_Entropy}.

			For $\mathcal{K}'$, we shall control $ \p^2(\eps D_t)^2\nab\times(\eps \rho_0\rho^{-1}a B D_t q)$. We have
			\begin{align*}
				&\p^2(\eps D_t)^2\nab\times(\eps\rho_0\rho^{-1} a B D_t q)\\
				=&-\eps \rho_0\rho^{-1} a B\times (\p^2(\eps D_t)^2\nab D_t q)\underbrace{-\eps[\p^2(\eps D_t)^2, \rho_0\rho^{-1}a B]\times(\nab D_t q)+\eps\p^2(\eps D_t)^2(D_t q\nab\times (\rho_0\rho^{-1} a B))}_{:=\RR_2'}\\
				=&~-\rho_0\rho^{-1}a B\times(\p^2(\eps D_t)^3\nab q)\underbrace{-\eps\rho_0\rho^{-1}a B\times(\p^2(\eps D_t)^2(\nab u_j\p_j q))}_{:=\mathcal{R}_3'} +\RR_2'\notag\\
				= &~\rho_0\rho^{-1} a B\times\p^2(\eps D_t)^3 (\eps\rho D_t u)+\rho_0\rho^{-1} a  B\times \p^2(\eps D_t)^3(\eps B\times(\nab\times B))++\RR_2'+\mathcal{R}_3'\notag\\
				= &~\rho_0 a B\times( \p^2(\eps D_t)^4 u) +\underbrace{\eps^2\rho^{-1} a B\times D_t(B\times\p^2(\eps D_t)^2 (\nab\times (\rho_0 B)))}_{=\mathcal{K}_1'}\notag\\
				&\underbrace{+\eps^4\rho_0\rho^{-1} a B\times\left([\p^2 D_t^3,\rho] D_t u
					+[\p^2, D_t](D_t^2\nab\times B)+D_t\left([\p^2 D_t^2,B]\times(\nab\times B) \right) +D_t \left( [\rho_0,\p^2 D_t^2\nab\times ]B\right)\right)}_{:=\RR_4'}\\
				&+\RR_2'+\RR_3'.
			\end{align*}
			Similarly as in the proof of Lemma \ref{lem_curl_H4}, we have
			\begin{equation*}
				\|\RR_2'+\RR_3'+\RR_4'\|_0\le P(E_4(t),E_5(t)).
			\end{equation*}
			
			The analysis of $\mathcal{K}_1'$ is also parallel to the analysis of $\mathcal{K}_1$. 
			\begin{align}
				&\int_{\Om}\left(\p^2(\eps D_t)^2 \nab\times (\rho_0 B)\right)\cdot \left(\eps^2\rho^{-1} a B\times D_t(B\times\p^2(\eps D_t)^2 (\nab\times (\rho_0 B))) \right)\dx\no\\
				=&-\frac12\ddt\io\eps^2 \rho^{-1}a\left|B\times\p^2(\eps D_t)^2 \nab\times (\rho_0 B)\right|^2\dx+\frac12\io(\rho^{-1}a \nab\cdot u + D_t(\rho^{-1}u))\left|B\times\p^2(\eps D_t)^2 \nab\times (\rho_0 B)\right|^2\dx\no\\
				\le&-\frac12\ddt\io\eps^2 \rho^{-1}a\left|B\times\p^2(\eps D_t)^2 \nab\times (\rho_0 B)\right|^2\dx+P(E_4(t),E_5(t)),
			\end{align}
			and thus
			\begin{equation}
				\mathcal{K}'+\frac12\ddt\io\eps^2 \rho^{-1}a\left|B\times\p^2(\eps D_t)^2 \nab\times (\rho_0 B)\right|^2\dx\le P(E_4(t))\|(\eps D_t)^4 u\|_2+ P(E_4(t),E_5(t)).
			\end{equation}
			So we obtain the vorticity estimates for $k=0$ 
			\begin{equation}
				\begin{aligned}
					\ddt\left(\left\|(\eps D_t)^2\nab\times(\rho_0 u)\right\|_2^2+ \left\|(\eps D_t)^2\nab\times(\rho_0 B)\right\|_2^2+\left\|\eps B\times ((\eps D_t)^2 \nab\times(\rho_0 B))\right\|_2^2\right)\le P(E_4(t),E_5(t))+E_6(t).
				\end{aligned}
			\end{equation}

			For $k=1,2$, we get the following estimates in the same way as in the proof of Lemma \ref{lem_curl_H4}
			\begin{align}
				&\ddt\left(\left\|(\eps D_t)^{k+2}\nab\times(\rho_0 u)\right\|_{2-k}^2+ \left\|(\eps D_t)^{k+2}\nab\times(\rho_0 B)\right\|_{2-k}^2+\left\|\eps B\times ((\eps D_t)^{k+2} \nab\times(\rho_0 B))\right\|_{2-k}^2\right)\no\\
				\le&~P(E_4(t),E_5(t))+E_6(t).
			\end{align}
			The proof of Lemma \ref{lem_curl_H5} is completed.\end{proof}
For the case $l=2,3$, we can also obtain the desired estimates by mimicing the proof of Lemma \ref{lem_curl_H4} and Lemma \ref{lem_curl_H5}.
		\begin{lem}\label{lem_curl_H6}
			It holds that
			\begin{align}
				&\sum_{k=0,1} \ddt \left(  \left\|(\eps D_t)^{k+4}\nab\times (\rho_0 u)\right\|_{1-k}^2+ \left\|(\eps D_t)^{k+4}\nab\times (\rho_0 B)\right\|_{1-k}^2 +  \left\|\eps B\times\left((\eps D_t)^{k+4}\nab\times (\rho_0 B)\right)\right\|_{1-k}^2\right) \notag\\
				\le&~ P(E_4(t),E_5(t),E_6(t))+E_7(t),\\
				&\ddt \left(  \left\|(\eps D_t)^{6}\nab\times (\rho_0 u)\right\|_{0}^2+ \left\|(\eps D_t)^{6}\nab\times (\rho_0 B)\right\|_{0}^2 +  \left\|\eps B\times\left((\eps D_t)^{6}\nab\times (\rho_0 B)\right)\right\|_{0}^2\right) \notag\\
				\le&~ P(E_4(t),E_5(t),E_6(t),E_7(t))+E_8(t).
			\end{align}
		\end{lem}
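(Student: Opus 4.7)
The plan is to mimic the argument of Lemma \ref{lem_curl_H5} by increasing the number of material derivatives (and correspondingly decreasing the number of spatial derivatives) applied to the evolution equation \eqref{curlv eq}. Specifically, for the case $l=2$, I apply $\p^{1-k}(\eps D_t)^{k+4}$ for $k=0,1$, and for the case $l=3$, I apply $(\eps D_t)^{6}$ to \eqref{curlv eq}. Then I carry out the $L^2$-type energy estimate obtained by multiplying by $\p^{1-k}(\eps D_t)^{k+4}\nab\times(\rho_0 u)$ (resp. $(\eps D_t)^6 \nab\times(\rho_0 u)$) and integrating over $\Omega$. The time derivative $\p_t$ is combined with $u\cdot\nab$ to form $D_t$ (plus a controllable commutator), and the material derivative falling on the differentiated curl equation is replaced using \eqref{curlv eq} shifted by the high-order derivatives.

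The structural bookkeeping is identical to the proof of Lemma \ref{lem_curl_H5}. After integration by parts in $B\cdot\nab$ and invoking the magnetic field equation $D_t B = B\cdot\nab u - B \nab\cdot u$, I obtain the energy term $-\frac12\ddt\io\rho^{-1}|\p^{1-k}(\eps D_t)^{k+4}\nab\times(\rho_0 B)|^2\dx$ for the current, together with residual commutators whose $L^2$ norms are controlled by $P(E_4(t),\ldots,E_{4+l}(t))$. The ``directional'' regularity of the entropy $\rho^{-1}B\cdot\nab S$ provided by Corollary \ref{Cor_Entropy} is crucial to bound the analogues of $\mathcal{I}_3'$ and $\mathcal{I}_7'$, since these involve top-order derivatives falling on $\rho^{-1}B\cdot\nab\rho_0 = \rho_0'(S)\, \rho^{-1}B\cdot\nab S$.

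The main obstacle, as in the preceding two lemmas, is the critical term
\[
\mathcal{K}^{(l)} := \int_{\Omega}\p^{1-k}(\eps D_t)^{k+2l}\nab\times(\rho_0 B)\cdot\p^{1-k}(\eps D_t)^{k+2l}\nab\times(\eps\rho_0\rho^{-1}aB\,D_t q)\dx,
\]
whose integrand has $(k+2l+2)$ material derivatives and $(2-k)$ spatial derivatives in total, exceeding the regularity threshold of $E_{4+l}(t)$. I handle this by isolating the highest-order factor $-\eps\rho_0\rho^{-1}aB\times\p^{1-k}(\eps D_t)^{k+2l}\nab D_t q$, commuting $\nab$ with $D_t$, and then substituting the momentum equation $-\eps^{-1}\nab q=\rho D_t u +B\times(\nab\times B)$. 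This trades one normal derivative for two additional material derivatives: the first resulting piece $\rho_0 aB\times\p^{1-k}(\eps D_t)^{k+2l+2}u$ is controlled by $\|B\|_\infty \cdot \|(\eps D_t)^{k+2l+2}u\|_{2-k}\leq \|B\|_\infty\sqrt{E_{4+l+1}(t)}$, while the second piece $\eps^2\rho^{-1}aB\times D_t(B\times \p^{1-k}(\eps D_t)^{k+2l}\nab\times(\rho_0 B))$ produces, via the vector identity $(\bd u\times \bd v)\cdot \bd w = -(\bd u\times \bd w)\cdot \bd v$ with $\bd u = B$, the exact-derivative energy contribution
\[
-\frac12\ddt\io \eps^2\rho^{-1}a\left|B\times\p^{1-k}(\eps D_t)^{k+2l}\nab\times(\rho_0 B)\right|^2\dx,
\]
plus lower-order remainders bounded by $P(E_4(t),\ldots,E_{4+l}(t))$.

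Combining all pieces and summing in $k$ gives the claimed bounds, with $l=2$ producing the right-hand side $P(E_4,E_5,E_6)+E_7$ and $l=3$ producing $P(E_4,E_5,E_6,E_7)+E_8$. The only new bookkeeping compared to Lemma \ref{lem_curl_H5} is the proliferation of commutator terms of the form $[\p^{1-k}(\eps D_t)^{k+2l},\rho^{-1}B\cdot\nab]$ and $[\p^{1-k},\rho^{-1}D_t]$, which require Lemma \ref{Prop_AGU} and the propagation $[\rho^{-1}B\cdot\nab,D_t]=0$ from Proposition \ref{Prop_Com}; since the order of spatial derivatives is now strictly less than in Lemma \ref{lem_curl_H5}, these commutators are actually easier to close in the available Sobolev regularity. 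Hence I expect no genuinely new analytical difficulty, only a careful accounting of indices.
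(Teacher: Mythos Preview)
Your proposal is correct and follows exactly the approach the paper itself indicates (the paper's proof of this lemma is a single sentence referring back to Lemmas~\ref{lem_curl_H4} and~\ref{lem_curl_H5}). The only slip is a harmless index typo: the $L^2$ norm of $\p^{1-k}(\eps D_t)^{k+2l+2}u$ is $\|(\eps D_t)^{k+2l+2}u\|_{1-k}$ (i.e.\ $\|\cdot\|_{3-k-l}$ in general), not $\|\cdot\|_{2-k}$; with this correction the term is precisely a summand of $E_{4+l+1}(t)$ as you claim.
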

		Summarizing Lemma \ref{lem_curl_H4}-Lemma \ref{lem_curl_H6}, we complete the proof of Lemma \ref{lem curl 0}. We now need to recover the estimates for $\nab\times u$ and $\nab\times B$ from the control in Lemma \ref{lem curl 0}.
		
		\begin{lem}[Estimates of vorticity and current]\label{lem curl}
		For $0\leq l\leq 3$ and $0\leq k\leq 3-l$, the following estimates for the voricity and the current hold uniformly in $\eps$:
		\begin{align}
		&\left\|(\eps D_t)^{k+2l}\nab\times u\right\|_{3-k-l}^2 + \left\|(\eps D_t)^{k+2l}\nab\times B\right\|_{3-k-l}^2 \notag\\
		\label{curl estimate} \lesssim&~ P\left(E(0)\right) + P(E(t)) \int_0^tP(E(\tau))\dtau +\sum_{k_0\leq k\atop l_0\leq l} \left(\|(\eps D_t)^{k_0+2l_0}u\|_{3-k_0-l_0}^2+\|(\eps D_t)^{k_0+2l_0}B\|_{3-k_0-l_0}^2\right).
		\end{align}	
		\end{lem}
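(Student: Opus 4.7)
The plan is to recover the desired estimates from Lemma~\ref{lem curl 0} via the algebraic identity
\begin{equation*}
\nab\times u = \rho_0^{-1}\nab\times(\rho_0 u) - \rho_0^{-1}(\nab\rho_0)\times u,\qquad \nab\times B = \rho_0^{-1}\nab\times(\rho_0 B) - \rho_0^{-1}(\nab\rho_0)\times B,
\end{equation*}
with $\rho_0 = \rho(0,S)$. The crucial observation is that $D_t\rho_0 = \rho_0'(S)\,D_t S = 0$, so $D_t$ commutes with multiplication by any smooth function of $\rho_0$. Hence applying $(\eps D_t)^{k+2l}$ to the identity yields
\begin{equation*}
(\eps D_t)^{k+2l}\nab\times u = \rho_0^{-1}\,(\eps D_t)^{k+2l}\nab\times(\rho_0 u) - (\eps D_t)^{k+2l}\bigl(\rho_0^{-1}(\nab\rho_0)\times u\bigr),
\end{equation*}
and similarly for $B$. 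This converts the problem into (i) an $H^{3-k-l}$ estimate of the first term, and (ii) a Leibniz expansion of the second term in which every factor is controllable.

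For part (i), I first take the time-integrated form of Lemma~\ref{lem curl 0} to obtain
\begin{equation*}
\|(\eps D_t)^{k+2l}\nab\times(\rho_0 u)\|_{3-k-l}^2(t) \le P(E(0)) + \int_0^t \Bigl(P\bigl(\textstyle\sum_{j\le l}E_{4+j}(\tau)\bigr) + E_{4+l+1}(\tau)\Bigr)\dtau,
\end{equation*}
and likewise for $\nab\times(\rho_0 B)$ and the Lorentz-flux term $\eps B\times(\cdots)$. The right-hand side is absorbed into $P(E(t))\int_0^t P(E(\tau))\dtau$ after noting $E_{4+l+1}\le E(t)$. Multiplying by $\rho_0^{-1}$ and taking $H^{3-k-l}$ norms produces commutators $[\rho_0^{-1},\p^\alpha]$ of order at most $3-k-l\le 3$; by Moser/product estimates these are bounded by $P(\|S\|_4)$ times $\|(\eps D_t)^{k+2l}\nab\times(\rho_0 u)\|_{3-k-l}$, which is again harmless.

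For part (ii), I expand $(\eps D_t)^{k+2l}\bigl(\rho_0^{-1}(\nab\rho_0)\times u\bigr)$ via Leibniz in $D_t$. Each $D_t$ falling on $\rho_0^{-1}\nab\rho_0$ is rewritten using $[D_t,\nab] = -(\nab u_j)\p_j$ together with $D_t S=0$, producing monomials whose only entropy factors take the form $\p^\beta S$ with $|\beta|$ small, hence controlled through Corollary~\ref{Cor_Entropy}, while every $D_t$ falling on $u$ yields a lower-index derivative $(\eps D_t)^{k_0+2l_0}u$ with $k_0+l_0\le k+l$. Taking $H^{3-k-l}$ norms and applying standard product estimates bounds this piece by $P(E(t))\bigl(1 + \sum_{k_0\le k,l_0\le l}\|(\eps D_t)^{k_0+2l_0}u\|_{3-k_0-l_0}^2\bigr)$. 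Combining parts (i) and (ii), and repeating the argument verbatim for $\nab\times B$, yields \eqref{curl estimate}.

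The only mildly subtle point is the bookkeeping of $\eps$ weights: each $D_t$ hitting $\rho_0^{-1}\nab\rho_0$ costs one factor of $\nab u$ but no power of $\eps^{-1}$, so the weights on the remaining $(\eps D_t)^{k_0+2l_0}u$ factors on the right are exactly what appears in the energy $E(t)$. No singular-in-$\eps$ commutator is produced, which is ultimately why we had to pre-multiply by $\rho_0$ in Lemma~\ref{lem curl 0} in the first place.
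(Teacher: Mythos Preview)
Your approach is essentially the paper's: both use the identity $\nab\times u = \rho_0^{-1}\nab\times(\rho_0 u) - \rho_0^{-1}\nab\rho_0\times u$, the observation $D_t\rho_0=0$, Lemma~\ref{lem curl 0} (time-integrated) for the first piece, and a Leibniz/commutator expansion for the second.

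One point deserves sharpening. Your stated bound $P(E(t))\bigl(1+\sum\|\cdot\|^2\bigr)$ for part (ii) does not match the lemma's form, and the free-standing $P(E(t))$ would obstruct the final Gr\"onwall. The paper's refinement is this: since $D_t\rho_0=0$, each $j\ge 1$ Leibniz piece equals $[(\eps D_t)^j,\nab]\rho_0 \times (\eps D_t)^{k+2l-j}u$, and by Lemma~\ref{Prop_AGU} this commutator carries an \emph{extra} $\eps$ weight on its top-order factor. That extra weight is exactly what allows the Reynolds transport trick (Lemma~\ref{lem transport}) to convert such products into the time-integrated form $P(E(0))+P(E(t))\int_0^t P(E(\tau))\dtau$; this is precisely the content of Lemma~\ref{lem prod comm}, which the paper invokes directly. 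Only the $j=0$ piece $\rho_0^{-1}\nab\rho_0\times(\eps D_t)^{k+2l}u$ feeds the residual sum, and its multiplicative coefficient depends only on $\|S\|_4$, which is itself under control via Corollary~\ref{Cor_Entropy} in the required form. So your outline is correct, but you should either invoke Lemma~\ref{lem prod comm} for the $j\ge 1$ pieces or make the extra-$\eps$ / transport step explicit to land on the stated bound.
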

		\begin{proof}
		It suffices to prove the estimates for $u$ and the same argument applies to $B$. For fixed $k,l\in\Z$ satisfying $0\leq l\leq 3$ and $0\leq k\leq 3-l$, we have
		\[
		\nab\times u = \rho_0^{-1} \left(\nab\times(\rho_0 u )- \nab\rho_0\times u\right),~~\nab\times B = \rho_0^{-1} \left(\nab\times(\rho_0 B)- \nab\rho_0\times B\right),
		\]and thus
		\begin{align*}
		(\eps D_t)^{k+2l}\nab\times u =&~ (\eps D_t)^{k+2l}\left[\rho_0^{-1} \left(\nab\times(\rho_0 u )- \nab\rho_0\times u\right)\right]\\
		=&~\rho_0^{-1}(\eps D_t)^{k+2l}\left(\nab\times(\rho_0 u )\right) - \rho_0^{-1}\sum_{j=0}^{k+2l}\binom{k+2l}{j} (\eps D_t)^j\nab\rho_0 \times (\eps D_t)^{k+2l-j} u\\
		=&~\rho_0^{-1}(\eps D_t)^{k+2l}\left(\nab\times(\rho_0 u )\right) - \rho_0^{-1}\sum_{j=0}^{k+2l}\binom{k+2l}{j}\left([(\eps D_t)^j,\nab]\rho_0\right) \times (\eps D_t)^{k+2l-j} u
		\end{align*}where we use the fact $D_t\rho_0=0$. The first term is controlled by using Lemma \ref{lem curl 0} and Corollary \ref{Cor_Entropy}
		\begin{align}
		 \left\|\rho_0^{-1}(\eps D_t)^{k+2l}\left(\nab\times(\rho_0 u )\right)\right\|_{3-k-l}^2\leq P\left(\sum_{j=0}^{l}E_{4+j}(0)\right) + \int_0^tP\left(\sum_{j=0}^{l}E_{4+j}(\tau)\right)+ E_{4+l+1}(\tau)\dtau.
		\end{align} The second term is controlled by using the conclusion of Lemma \ref{lem prod comm}
		\begin{align}
		&\left\|\left([(\eps D_t)^j,\nab]\rho_0\right) \times (\eps D_t)^{k+2l-j} u\right\|_{3-k-l}^2\notag\\
		\lesssim&~ P\left(E(0)\right) + P(E(t))\int_0^tP(E(\tau))\dtau +\sum_{k_0\leq k\atop l_0\leq l} \|(\eps D_t)^{k_0+2l_0}u\|_{3-k_0-l_0}^2.
		\end{align}
		\end{proof}
		\begin{rmk}
		The estimate \eqref{curl estimate} in this proposition is not the final version that we use to close the uniform energy estimates for $E(t)$ because $\|\cdot\|_{3-k-l}$ norm still contains normal derivatives of $u$ and $B$. The good thing is that the remaining terms to be controlled, namely the last term in \eqref{curl estimate}, only belong to $H_*^7(\Om)$ and the number of $\eps$ weights remains the same as before. At the end of this section, we will repeatedly apply the div-curl analysis to these terms such that they can finally be controlled by $P(E(0))+\int_0^t P(E(\tau))\dtau$.
		\end{rmk}
		
		\subsubsection{Reduction of pressure and divergence}\label{sect reduction q}
		We show how to reduce the control of the pressure to that of the velocity and magnetic field when there is at least one spatial derivative on the fluid pressure $q$. This follows from using the momentum equation
		\begin{align}\label{mom 3}
			-\nab q=\eps \rho D_t u + \eps B\times(\nab\times B)
		\end{align}and the continuity equation and the divergence constraint
		\begin{align}\label{div 3}
			\nab\cdot u =-\eps a D_t q,~~\nab\cdot B=0.
		\end{align}
		These two equations show that  a \textit{spatial} derivative of $q$ is reduced to a \textit{material} derivative of $u$ and the curl part of $B$ and the divergence of velocity is converted to the material derivative of $q$.
		
		For the differentiated version, we aim to prove the following reduction
		\begin{lem}\label{lem div}
		Let $k,l$ be integers satisfying $0\leq l\leq 3$ and $0\leq k\leq 3-l$. Then we have
		\begin{align}
		\label{nabq estimate} \|(\eps D_t)^{k+2l}q\|_{4-k-l}^2\lesssim&~ \|\rho (\eps D_t)^{k+2l+1} u\|_{3-k-l}^2 + \|\eps B\times (\eps D_t)^{k+2l}(\nab\times B) \|_{3-k-l}^2\\
		 &+P(E(0)) + P(E(t))\int_0^t P(E(\tau))\dtau, \notag\\
		\label{divu estimate}  \|\nab\cdot (\eps D_t)^{k+2l}u\|_{3-k-l}^2\lesssim&~\|a(\eps D_t)^{k+2l+1} q\|_{3-k-l}^2 + P(E(0)) + P(E(t))\int_0^t P(E(\tau))\dtau,\\
		 \label{divB estimate}  \|\nab\cdot (\eps D_t)^{k+2l}B\|_{3-k-l}^2\lesssim&~ P(E(0)) + P(E(t))\int_0^t P(E(\tau))\dtau.
		\end{align}
		\end{lem}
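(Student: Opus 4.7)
The plan is to derive each of the three estimates by applying $(\eps D_t)^{k+2l}$ to the appropriate evolution or constraint equation and then commuting it past $\nab$, exactly as summarized by the ``div reduction'' arrows of Diagram~2. For \eqref{nabq estimate}, I would start from the momentum equation in the form $\nab q = -\eps\rho D_t u - \eps B\times(\nab\times B)$. Applying $(\eps D_t)^{k+2l}$ and commuting it past $\nab$ on the left yields
\begin{equation*}
\nab\big((\eps D_t)^{k+2l}q\big) = -\rho(\eps D_t)^{k+2l+1}u - \eps B\times\big((\eps D_t)^{k+2l}(\nab\times B)\big) + \mathcal{R}_q,
\end{equation*}
where $\mathcal{R}_q$ collects the commutators $[\nab,(\eps D_t)^{k+2l}]q$, $[\rho,(\eps D_t)^{k+2l}]D_t u$, and $[\eps B,(\eps D_t)^{k+2l}]\times(\nab\times B)$. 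Using the standard interpolation $\|(\eps D_t)^{k+2l}q\|_{4-k-l}^2 \lesssim \|(\eps D_t)^{k+2l}q\|_0^2 + \|\nab(\eps D_t)^{k+2l}q\|_{3-k-l}^2$, with the $L^2$ piece bounded via Proposition~\ref{Lem_TangentialEstimates}, the estimate follows once the $H^{3-k-l}$ norm of $\mathcal{R}_q$ is absorbed into $P(E(0))+P(E(t))\int_0^t P(E(\tau))\dtau$.

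For \eqref{divu estimate}, I apply $(\eps D_t)^{k+2l}$ to the continuity equation $\nab\cdot u=-\eps a D_t q$ and commute $\nab\cdot$ past the material derivatives, obtaining
\begin{equation*}
\nab\cdot\big((\eps D_t)^{k+2l}u\big) = -a(\eps D_t)^{k+2l+1}q + [\nab\cdot,(\eps D_t)^{k+2l}]u - [a,(\eps D_t)^{k+2l}](\eps D_t q).
\end{equation*}
Estimate \eqref{divB estimate} is the simplest: since $\nab\cdot B = 0$ propagates in time, one has $(\eps D_t)^{k+2l}(\nab\cdot B)=0$, so $\nab\cdot((\eps D_t)^{k+2l}B) = [\nab\cdot,(\eps D_t)^{k+2l}]B$ is a pure commutator with no leading-order contribution.

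The main obstacle is verifying that all the commutators appearing above have $H^{3-k-l}$ norms controllable by $P(E(t))$ with the correct power of $\eps$. Lemma~\ref{Prop_AGU} gives a concrete decomposition: the top-order contributions to $[\nab,(\eps D_t)^{k+2l}]f$ are schematically of the form $(\eps D_t)^{j}\nab u\cdot\nab(\eps D_t)^{k+2l-1-j}f$, weighted by one factor of $\eps$, for $0\le j\le k+2l-1$. Since $j+(k+2l-1-j)=k+2l-1$ and $k+2l\leq 6$ in the range considered, each factor is already part of a component of $E(t)$ with matching $\eps$-weight; placing one factor in $L^\infty$ via the Sobolev embedding $H_*^8(\Om)\hookrightarrow H^4(\Om)\hookrightarrow L^\infty(\Om)$ and the other in the relevant Sobolev norm yields a pointwise-in-time bound by $P(E(t))$. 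A parallel argument handles $[\rho,(\eps D_t)^{k+2l}]D_t u$, $[a,(\eps D_t)^{k+2l}](\eps D_t q)$, and $[\eps B,(\eps D_t)^{k+2l}]\times(\nab\times B)$, with $D_t\rho$ and $D_t a$ producing extra $\eps$ weights via the equation of state and $D_t S=0$. Finally, applying the fundamental theorem of calculus $\|g(t,\cdot)\|_s^2=\|g(0,\cdot)\|_s^2+\int_0^t\p_\tau\|g\|_s^2\,\dtau$ to each such $P(E(t))$ bound, and absorbing the time derivative of the squared norm into $P(E(\tau))$, converts the pointwise bound into the required form $P(E(0))+P(E(t))\int_0^t P(E(\tau))\dtau$, which completes the proof.
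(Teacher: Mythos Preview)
Your proof is correct and follows the same route as the paper: invoke the momentum, continuity, and divergence-free equations, commute $(\eps D_t)^{k+2l}$ past $\nab$, control the $L^2$ piece via Proposition~\ref{Lem_TangentialEstimates}, and reduce everything to commutator bounds of the type described in Lemma~\ref{Prop_AGU}. The paper packages the commutator estimates as a separate Lemma~\ref{lem commutators} (proved in Lemmas~\ref{lem prod comm}--\ref{lem Dt comm} using the Reynolds transport formula of Lemma~\ref{lem transport} applied to the factor carrying the spare $\eps$ weight), which makes the conversion to the form $P(E(0))+P(E(t))\int_0^t P(E(\tau))\dtau$ more explicit than your closing FTC step, but the mechanism is the same.
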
 
		\begin{proof}
		We start with the reduction of $\nab q$.
		\begin{align}\label{nab q}
		\|(\eps D_t)^{k+2l}q\|_{4-k-l}^2\lesssim \|(\eps D_t)^{k+2l}q\|_{0}^2 +\|(\eps D_t)^{k+2l}\nab q\|_{3-k-l}^2 +\|[(\eps D_t)^{k+2l},\nab]q\|_{3-k-l}^2,
		\end{align}where the first term on the right side has been controlled in Proposition \ref{Lem_TangentialEstimates}. This inequality together with \eqref{divcurl E4u}-\eqref{divcurl E5B} shows that we shall control the following terms
		\begin{itemize}
		\item $\|(\eps D_t)^{k+2l}\nab q\|_{3-k-l}^2$ and $\|(\eps D_t)^{k+2l}\nab \cdot u\|_{3-k-l}^2$.
		\item $\|[(\eps D_t)^{k+2l},\nab]f\|_{3-k-l}^2$ for $f=u,B,q$. 
		\end{itemize}
		For the commutator part, it is summarized in Lemma \ref{lem prod comm} and we just record the result here
		\begin{align}
			\left\|[\p, (\eps D_t)^{k+2l}]f\right\|_{3-k-l}^2\leq  P(E(0)) + P(E(t))\int_0^t P(E(\tau))\dtau.
		\end{align}Next, we control $\|(\eps D_t)^{k+2l}\nab q\|_{3-k-l}^2$ and $\|(\eps D_t)^{k+2l}\nab \cdot u\|_{3-k-l}^2$. Invoking the equations \eqref{mom 3}-\eqref{div 3}, we get
		\begin{align}
		 \label{Dt div u}\|(\eps D_t)^{k+2l}\nab \cdot u\|_{3-k-l}^2=\|(\eps D_t)^{k+2l}(a\eps D_t q)\|_{3-k-l}^2\leq \|a(\eps D_t)^{k+2l+1} q\|_{3-k-l}^2 + \|[(\eps D_t)^{k+2l},a]\eps D_t q\|_{3-k-l}^2,
		\end{align}where the last term can be directly controlled and we postpone the analysis to Section \ref{sect commutators}. For the pressure gradient, we have
		\begin{align*}
		-(\eps D_t)^{k+2l}\nab q=&~(\eps D_t)^{k+2l}(\rho\eps D_t u +\eps B\times(\nab\times B))\\
		=&~ \rho (\eps D_t)^{k+2l+1} u + \eps B\times (\eps D_t)^{k+2l}(\nab\times B) \\
		&+[(\eps D_t)^{k+2l}, \rho](\eps D_t u) + [(\eps D_t)^{k+2l}, \eps B\times](\nab\times B),
		\end{align*}and so
		\begin{align}
		 \label{Dt nab q}\|(\eps D_t)^{k+2l}\nab q\|_{3-k-l}^2\leq&~ \|\rho (\eps D_t)^{k+2l+1} u\|_{3-k-l}^2 + \|\eps B\times (\eps D_t)^{k+2l}(\nab\times B) \|_{3-k-l}^2\\
		&+\|[(\eps D_t)^{k+2l}, \rho](\eps D_t u)\|_{3-k-l}^2 + \|  [(\eps D_t)^{k+2l}, \eps B\times](\nab\times B)\|_{3-k-l}^2\notag
		\end{align}
		In Section \ref{sect commutators}, we will prove that these commutators terms can be directly controlled
		\begin{align}
		 &\|[(\eps D_t)^{k+2l},a]\eps D_t q\|_{3-k-l}^2+\|[(\eps D_t)^{k+2l}, \rho](\eps D_t u)\|_{3-k-l}^2 + \|  [(\eps D_t)^{k+2l}, \eps B\times](\nab\times B)\|_{3-k-l}^2\notag\\
		 \label{div comm} \lesssim&~ P(E(0)) + P(E(t))\int_0^t P(E(\tau))\dtau.
		\end{align}
		 Therefore, we conclude that the reduction of pressure and divergence actually converts a spatial derivative to a material derivative $(\eps D_t)$ and lower-order curl estimate that has been analyzed in Section \ref{sect curl}. We get
		 \begin{align}
		 \|(\eps D_t)^{k+2l}q\|_{4-k-l}^2\lesssim&~ \|\rho (\eps D_t)^{k+2l+1} u\|_{3-k-l}^2 + \|\eps B\times (\eps D_t)^{k+2l}(\nab\times B) \|_{3-k-l}^2\\
		 &+ P(E(0)) + P(E(t))\int_0^t P(E(\tau))\dtau, \notag\\
		 \|\nab\cdot (\eps D_t)^{k+2l}u\|_{3-k-l}^2\lesssim&~\|a(\eps D_t)^{k+2l+1} q\|_{3-k-l}^2 + P(E(0)) + P(E(t))\int_0^t P(E(\tau))\dtau, \notag\\
		\|\nab\cdot (\eps D_t)^{k+2l}B\|_{3-k-l}^2\lesssim&~ P(E(0)) + P(E(t))\int_0^t P(E(\tau))\dtau.
		 \end{align} The proof is completed.
		 \end{proof}
		\begin{rmk}
		 Lemma \ref{lem div} shows that we can reduce a spatial derivative on $q$ to $(\eps D_t u)$ and $\nab\times B$ (which is analyzed in Lemma \ref{lem curl}) plus controllable terms; and reduce the divergence part to $(\eps D_t q)$ plus controllable terms. Therefore, every time we do such reduction, we actually trade one spatial derivative for one material derivative together with an $\eps$ weight.  Repeatedly, we end this reduction procedure until there is no spatial derivatives falling on $u, B, q$, in which case we finally needs to the estimates of full material derivatives which have been analyzed in Proposition \ref{Lem_TangentialEstimates}.
		\end{rmk}
		
		\subsubsection{Estimates of commutators}\label{sect commutators}
		The end of this part is devoted to the control of commutator terms arising from \eqref{divcurl E4u}-\eqref{divcurl E5B}. We record the results here and refer to the proofs in Lemma \ref{lem prod comm} and \ref{lem Dt comm}.
		\begin{lem}\label{lem commutators}
		Let $k,l$ be integers satisfying $0\leq l\leq 3$ and $0\leq k\leq 3-l$. Then the following estimates hold
		\begin{align}
		&\left\|[(\eps D_t)^{k+2l},\nab\cdot](u,B)\right\|_{3-k-l}^2+\left\|[(\eps D_t)^{k+2l},\nab\times](u,B)\right\|_{3-k-l}^2\lesssim P(E(0)) + P(E(t))\int_0^t P(E(\tau))\dtau\\
		&\left\|[(\eps D_t)^{k+2l},a]\eps D_t q\right\|_{3-k-l}^2+\left\|[(\eps D_t)^{k+2l}, \rho](\eps D_t u)\right\|_{3-k-l}^2 + \left\|  [(\eps D_t)^{k+2l}, \eps B\times](\nab\times B)\right\|_{3-k-l}^2\\
		\lesssim&~ P(E(0)) + P(E(t))\int_0^t P(E(\tau))\dtau. \notag
		\end{align}
		\end{lem}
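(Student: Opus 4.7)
The plan is to expand each commutator via the Leibniz rule and control each resulting product term using the anisotropic product estimates, exploiting three structural facts: (i) the commutator identity $[\p, D_t^k]$ from Lemma \ref{Prop_AGU}, (ii) the constitutive relations $a = a(\eps q, S)$ and $\rho = \rho(\eps q, S)$, which imply that each material derivative hitting a coefficient produces a compensating $\eps$ factor via $D_t a = \eps \p_q a \, D_t q + \p_S a \, D_t S = \eps \p_q a \, D_t q$ (using $D_t S = 0$), and (iii) the fact that the total order $k + 2l \le 6 < 8$ throughout this lemma, so no top-order factor falls outside the scope of the energy $E(t)$ and in particular no Alinhac good-unknown correction is required.

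First, for $[(\eps D_t)^{k+2l},\nab\cdot](u,B)$ and $[(\eps D_t)^{k+2l},\nab\times](u,B)$, I expand using Lemma \ref{Prop_AGU} into a finite sum of monomials whose generic form is $\eps^{j_1 + j_2 + 1}(\p D_t^{j_1} u)(\p D_t^{j_2} f)$ with $j_1 + j_2 \le k+2l-1$ and $f \in \{u,B\}$. Each such product is estimated in $H^{3-k-l}(\Om)$ by placing the higher-order factor in $L^2$ and the lower-order factor in $L^\infty$ via $H_*^4(\Om) \hookrightarrow L^\infty(\Om)$; since $k+2l \le 6$, both factors fit strictly inside the energy functional $E(t)$.

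Second, for $[(\eps D_t)^{k+2l}, a]\eps D_t q$ and $[(\eps D_t)^{k+2l}, \rho](\eps D_t u)$, I apply the Faà di Bruno formula to $a(\eps q, S)$ and $\rho(\eps q, S)$. Every factor $(\eps D_t)^{j} a$ or $(\eps D_t)^j \rho$ with $j \ge 1$ reduces to a polynomial in quantities of the form $\eps^{i_1+\cdots+i_m} D_t^{i_1} q \cdots D_t^{i_m} q$ and derivatives of $S$ along characteristics (the $D_t S = 0$ terms vanish outright, and any remaining $\p S$ is handled through Corollary \ref{Cor_Entropy}). The $\eps$-weights align so that each term is pointwise bounded by a polynomial of factors already controlled by $E(t)$. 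The commutator $[(\eps D_t)^{k+2l}, \eps B \times](\nab \times B)$ is treated analogously: the Leibniz rule produces $\binom{k+2l}{j}\eps \bigl((\eps D_t)^{j} B\bigr) \times (\eps D_t)^{k+2l-j}(\nab \times B)$, and the explicit $\eps$ prefactor plus the $\eps$'s inside each factor supply the required weight.

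The last concern is when an expanded factor barely exceeds the norm controlled by $E(t)$ at time $t$ (e.g., a factor $(\eps D_t)^j f$ whose Sobolev index is one too high). For these I convert to a time-integral bound using the Reynolds transport formula (Lemma \ref{lem transport}), writing
\begin{equation*}
\bigl\| (\eps D_t)^j f(t, \cdot) \bigr\|_m^2 \le \bigl\| (\eps D_t)^j f(0, \cdot) \bigr\|_m^2 + \int_0^t P\bigl(E(\tau)\bigr) \, \dtau,
\end{equation*}
exactly as in the tangential estimate (Proposition \ref{Lem_TangentialEstimates}). Combining these three ingredients produces the desired bound $P(E(0)) + P(E(t)) \int_0^t P(E(\tau)) \, \dtau$. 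The main obstacle I anticipate is not analytic but combinatorial: carefully tracking $\eps$-weights in the Faà di Bruno expansion so that the weight of each resulting monomial is no smaller than the weight carried by the factors in $E(t)$; fortunately, the cap $k+2l \le 6$ leaves enough slack that a mechanical accounting suffices without any delicate cancellation.
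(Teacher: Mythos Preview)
Your approach is essentially the paper's: the proof is deferred to Lemma~\ref{lem prod comm} and Lemma~\ref{lem Dt comm}, which expand the commutators via Lemma~\ref{Prop_AGU} and the Leibniz rule, track the surplus $\eps$-weight in each monomial, and then invoke the Reynolds transport formula (Lemma~\ref{lem transport}) to produce the target form $P(E(0)) + P(E(t))\int_0^t P(E(\tau))\,\dtau$.

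There is, however, a mischaracterization in your write-up that you should correct. You say that generically ``both factors fit strictly inside the energy functional $E(t)$'' and that Reynolds transport is reserved for borderline factors whose Sobolev index is one too high. That is not the reason the transport trick is used. The commutators in \eqref{divcurl E4u}--\eqref{divcurl E5B} and in \eqref{nab q}--\eqref{Dt nab q} appear on the right-hand side of pointwise-in-$t$ bounds for $E(t)$ itself, \emph{not} under a time integral (the paper flags this just before Section~\ref{sect reduction q}). A product estimate yielding only $P(E(t))$ would therefore give $E(t)\lesssim P(E(t))+\cdots$, which is useless for closing. The paper's remedy is uniform: in \emph{every} monomial one factor carries a spare $\eps$ (coming from $\eps^{j_1+j_2+1}$ in your quadratic expansion, or from $D_t a=\eps\p_q a\,D_t q$ in the coefficient commutators), and Lemma~\ref{lem transport} is applied to that factor to convert it to initial data plus a time integral of the next-higher material derivative; a second application of Lemma~\ref{lem transport} (plus Young and Jensen) then disposes of the mixed $E(0)\cdot E(t)$ cross term. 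See the treatment of $(\eps\p(\eps D_t)^5 u)(\p u)$ and $(\eps\p(\eps D_t)^3 u)(\p(\eps D_t)^2 u)$ in the proof of Lemma~\ref{lem prod comm} for the template. Once you rephrase the role of Reynolds transport accordingly, your sketch matches the paper.
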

			
		\subsection{Closing the uniform estimates}
		Now we can turn to prove Theorem \ref{main thm, ill data}, that is, the uniform-in-$\eps$ control of $$E(t):=\sum_{l=0}^4\sum_{k=0}^{4-l}\left\|(\eps D_t)^{k+2l}\left(u,B,q,S,\rbp S\right)\right\|_{4-k-l}^2.$$ 
		\paragraph*{Control of the entropy.} The control of entropy and its directional derivative $\rbp S$ are established in Corollary \ref{Cor_Entropy}, that is,
		\begin{align}
		\ddt\left\|(\eps D_t)^{k+2l}(S,\rbp S)\right\|_{4-k-l}^2\leq P(E(t)).
		\end{align}
		
		\paragraph*{Control of the velocity $u$, the magnetic field $B$ and the pressure $q$.} This part is divided into several steps.
		\begin{enumerate}
		\item When $k+l=4$, there is no normal derivative in $\|\cdot\|_{4-k-l}$ norm. The control of this part is recorded in Proposition \ref{Lem_TangentialEstimates} by using the technique of modified Alinhac good unknowns.
		\item When $k+l\leq 3$, recall that \eqref{divcurl E4u}-\eqref{divcurl E5B} show that the norms $\|(\eps D_t)^{k+2l}(u,B)\|_{4-k-l}^2$ are reduced to the control of $\|(\eps D_t)^{k+2l}(\nab\times u,\nab\times B)\|_{3-k-l}^2$ and $\|(\eps D_t)^{k+2l}(\nab\cdot u)\|_{3-k-l}^2$. The commutators generated in this process can be directly controlled as shown in Lemma \ref{lem commutators}.
		\item For the curl part, we first show in Lemma \ref{lem curl 0} that 
		\begin{align}
		&\ddt\left(\left\|(\eps D_t)^{k+2l}\nab\times(\rho_0 u)\right\|_{3-k-l}^2+\left\|(\eps D_t)^{k+2l}\nab\times(\rho_0 B)\right\|_{3-k-l}^2+\left\|\eps B\times\left((\eps D_t)^{k+2l}\nab\times(\rho_0 B)\right)\right\|_{3-k-l}^2\right)\leq P(E(t)).
		\end{align}Then we remove the coefficient $\rho_0$ to reproduce the desired curl estimates
		\begin{align}
		&\left\|(\eps D_t)^{k+2l}\nab\times u\right\|_{3-k-l}^2 + \left\|(\eps D_t)^{k+2l}\nab\times B\right\|_{3-k-l}^2 \notag\\
		 \lesssim&~P\left(E(0)\right) + P(E(t)) \int_0^tP(E(\tau))\dtau +\sum_{k_0\leq k\atop l_0\leq l} \left(\|(\eps D_t)^{k_0+2l_0}u\|_{3-k_0-l_0}^2+\|(\eps D_t)^{k_0+2l_0}B\|_{3-k_0-l_0}^2\right),
		\end{align}	 which are recorded in Lemma \ref{lem curl}. 
		\item For the divergence part, we combine the momentum equation and the continuity equation to reduce $\nab q$ to $\eps D_t u$ and $\eps \nab\times B$ and reduce $\nab\cdot u$ to $\eps D_t q$:
		 \begin{align}
		 \|(\eps D_t)^{k+2l}q\|_{4-k-l}^2\lesssim&~ \|\rho (\eps D_t)^{k+2l+1} u\|_{3-k-l}^2 + \|\eps B\times (\eps D_t)^{k+2l}(\nab\times B) \|_{3-k-l}^2\\
		 &+ P(E(0)) + P(E(t))\int_0^t P(E(\tau))\dtau, \notag\\
		 \|\nab\cdot (\eps D_t)^{k+2l}u\|_{3-k-l}^2\lesssim&~\|a(\eps D_t)^{k+2l+1} q\|_{3-k-l}^2 + P(E(0)) + P(E(t))\int_0^t P(E(\tau))\dtau, \notag\\
		\|\nab\cdot (\eps D_t)^{k+2l}B\|_{3-k-l}^2\lesssim&~ P(E(0)) + P(E(t))\int_0^t P(E(\tau))\dtau.
		 \end{align}as recorded in Lemma \ref{lem div}.
		 \item It should be noted that the extra terms generated on the right side of curl estimates are of lower order. If $k_0+l_0=3$, then again we can control such terms by the tangential estimates in Proposition \ref{Lem_TangentialEstimates}. If $k_0+l_0<3$, then we again apply the above div-curl analysis to such terms to reduce one more spatial derivative.
		 \item We repeat step 3 and 4 until there is no spatial derivatives falling on $u, B, q$, and all remaining terms are $u,B,q$ differentiated by $(\eps D_t)^j$ for some $j$ which are again controlled in Proposition \ref{Lem_TangentialEstimates}:
		 \begin{align}
				\sum_{j=1}^{8} \sup_{\tau\in[0,t]}\left\|(\eps D_t)^j(q,u,B)\right\|_0^2 \le	P(E(0)) +  E(t)\int_0^t P(E(\tau)) \mathrm{d}\tau.	
			\end{align}
		\end{enumerate}
		The above argument gives the following uniform-in-$\eps$ estimates:
		\begin{equation}
			E(t)\lesssim P(E(0)) + P(E(t))\int_0^t P(E(\tau))\dtau
		\end{equation}
		where $E(t)$ is defined by \eqref{energy intro}. Since the right side of the energy inequality does not rely on $\eps$, we can use Gr\"onwall-type argument to prove that there exists some $T>0$ independent of $\eps$ such that
		\begin{equation}
			\sup_{t\in[0,T]} E(t) \le P(E(0)).
		\end{equation}
		Theorem \ref{main thm, ill data} is proven.

		\section{The limit to the incompressible inhomogeneous MHD system}\label{sect limit}
		
		The last section is devoted to the proof of {\it strong convergence} to the incompressible inhomogeneous MHD system \eqref{IMHD}. 
		
		\subsection{The strong convergence of vorticity, magnetic field and entropy}
		The uniform bounds imply that, up to a subsequence, we have
		\begin{align}
			(q,u,B,S) \to (q^0,u^0,B^0,S^0) &\text{ weakly-* in } L^{\infty}([0,T];H^4(\Omega)),\\
			(B,S)\to (B^0,S^0) &\text{ strongly in } C([0,T];H^{4-\delta}_{\mathrm{loc}}(\Omega)),\label{StrongLim_S_B}\\
			\nabla\times(\rho_0(S)u) \to \nabla\times(\rho_0(S^0)u^0) &\text{ strongly in } C([0,T];H^{3-\delta}_{\mathrm{loc}}(\Omega)),\label{StrongLim_curlrho0u}
		\end{align}
		with $\rho_0(S)=\rho(0,S)$. Similarly, we use $a_0(S)$ to denote $a(0,S)$. Here we note that the strong convergence of the vorticity is due to the uniform boundness of $D_t\nab\times(\rho_0 u)$ which satisfies
		\[
		D_t( \nabla\times(\rho_0 u) )=\nabla\times\left( \rho_0\rho^{-1} B\cdot\nabla B \right) + [u\cdot\nab,\nabla\times](\rho_0 u) + \nabla g \times\nabla (q+ \eps|B|^2/2),
		\]where $g(\eps q, S)=(\eps\rho)^{-1}(\rho-\rho_0)$ is bounded uniformly in $\eps$.
		
		\subsection{The strong convergence of pressure and divergence}
		We first prove that the expected limit functions satisfy $q^0=0$ and $\nabla\cdot u^0=0$. The first and second equation of \eqref{CMHD3} are written to be 
		\begin{equation}
			E(\eps q, S) D_t U + \eps^{-1} L U= J,
		\end{equation}
		with
		\begin{equation*}
			E(\eps q, S)=\begin{pmatrix}
				a(\eps q, S) & 0 \\
				0 & \rho(\eps q, S) I_3
			\end{pmatrix},\quad L=\begin{pmatrix}
				0 &\nabla\cdot\\
				\nabla & 0
			\end{pmatrix},\quad U=\begin{pmatrix}
				q \\
				u
			\end{pmatrix},\quad J=\begin{pmatrix}
				0\\
				B\times(\nabla\times B)
			\end{pmatrix}.
		\end{equation*}
		First notice that
		\begin{equation}
			\eps E(\eps q, S) \p_t U + L U = \eps E(\eps q, S) u\cdot\nabla U -\eps J.
		\end{equation}
		Using the uniform bounds and $E(\eps q,S)-E_0(S)=O(\eps)$, we obtain
		\begin{equation}\label{EuqU}
			\eps E_0(S)\p_t U + L U= \eps f,
		\end{equation} 
		where $E_0(S)= E(0,S)$ and $\{f\}_{\eps>0}$ is a bounded family in $C([0,T];H^3(\Omega))$. Passing to the weak limit shows that $\nabla q^0=0$ and $\nabla\cdot u^0$=0. Since $q^0\in L^\infty([0,T];H^4(\Omega))$ and $\Omega=\mathbb{R}^d_+$, we infer $q^0=0$.

		\begin{prop}\label{StrongLim_q_divu}
			It holds that
			\begin{align}
				q\to& 0 \text{ strongly in } L^2([0,T];H^{4-\delta}_{\mathrm{loc}}(\Omega)),\\
				\nabla\cdot u \to & 0 \text{ strongly in } L^2([0,T];H^{3-\delta}_{\mathrm{loc}}(\Omega)).\label{StrongLim_divu}
			\end{align}
		\end{prop}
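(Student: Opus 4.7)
The plan is to follow the microlocal defect measure strategy pioneered by M\'etivier--Schochet \cite{Metivier2001limit} and Alazard \cite{Alazard2005limit}, combined with the transport formula of H\"ormander. Recall from \eqref{EuqU} that $U=(q,u)^{\mathrm{T}}$ satisfies the semiclassical symmetric hyperbolic system
\begin{equation*}
\eps E_0(S)\p_t U + LU = \eps f,\q E_0(S)=\begin{pmatrix} a_0(S) & 0 \\ 0 & \rho_0(S) I_d \end{pmatrix},\q L=\begin{pmatrix} 0 & \nab\cdot \\ \nab & 0 \end{pmatrix},
\end{equation*}
with $\{f\}_{\eps>0}$ bounded in $C([0,T];H^3(\Om))$. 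The acoustic part of $U$ oscillates at frequency $1/\eps$, so strong convergence must come from dispersion on the unbounded domain $\Om=\R^d_+$. The divergence-free component of $u$ already converges strongly on compacts by \eqref{StrongLim_curlrho0u} together with $\nabla\cdot u^0=0$, so only the curl-free component of $u$ and the pressure $q$ need to be analyzed.

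The first step is to extract, up to a subsequence, an H-measure $\mu$ on $[0,T]\times\overline{\Om}\times S^{d}$ associated to the oscillating family $(q, \nab\cdot u)$, treating $\eps$ as the semiclassical parameter. The principal symbol of $\eps E_0(S)\p_t + L$ is $iE_0(S)\tau + i\ell(\xi)$ with $\ell(\xi)$ the symbol of $L$, whose characteristic variety consists of the zero section (transport/Alfv\'en modes) together with the two acoustic sheets
\begin{equation*}
\tau^2 = c_0(S)^2|\xi|^2,\q c_0(S)^2:=\frac{1}{a_0(S)\rho_0(S)}.
\end{equation*}
The already established weak-$*$ limits $q^0=0$, $\nab\cdot u^0=0$ kill the transport part, so $\mu$ is supported on the acoustic sheets. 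Next, I would apply the Gerard--Tartar commutator identity to a suitable microlocalizer against the symmetrizer $E_0(S)$, producing a Liouville transport equation for $\mu$ along the bicharacteristic flow of the acoustic Hamiltonian $h_\pm(x,\tau,\xi)=\tau\pm c_0(S^0(t,x))|\xi|$, including the reflection law at $\Sigma=\{x_d=0\}$ imposed by the boundary condition $u_d=0$ (a standard specular reflection on the cotangent bundle).

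The decisive step is to show that $\mu$ vanishes on any relatively compact subset of $[0,T]\times\Om$. For this I would invoke H\"ormander \cite[Theorem 17.2.8]{Hormander3}, which under the entropy decay \eqref{EntropyDecay}, together with the transport equation $\p_t S^0 + u^0\cdot\nab S^0=0$ and the uniform bounds on $u^0$, ensures that $c_0(S^0(t,\cdot))$ converges to the constant $c_0(0)$ sufficiently fast at spatial infinity; consequently every bicharacteristic of $h_\pm$, possibly after finitely many boundary reflections, escapes every compact set of $\overline\Om$ in finite time. Combined with the fact that initial traces of $\mu$ are carried away by this flow (no trapped rays), the transport equation forces $\mu\equiv 0$ on compact subsets for $t>0$. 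By the defining property of the H-measure this gives
\begin{equation*}
q\to 0,\q \nab\cdot u\to 0\q\text{strongly in } L^2([0,T];L^2_{\mathrm{loc}}(\Om)).
\end{equation*}
Interpolating with the uniform bounds $q\in L^\infty([0,T];H^4(\Om))$ and $\nab\cdot u\in L^\infty([0,T];H^3(\Om))$ from Theorem \ref{main thm, ill data} then upgrades these convergences to $L^2([0,T];H^{4-\delta}_{\mathrm{loc}}(\Om))$ and $L^2([0,T];H^{3-\delta}_{\mathrm{loc}}(\Om))$ respectively, as claimed.

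The main obstacle I anticipate is the boundary reflection analysis: one must verify that the (glancing-free) reflected rays still escape to infinity under only the entropy decay \eqref{EntropyDecay}, rather than trap along $\Sigma$. Handling glancing sets requires a careful polarization of $\mu$ at the boundary using the symmetrizer $E_0(S)$, so that the reflected measure inherits the same transport/escape properties. Once this is in place, the rest of the argument is essentially parallel to the non-isentropic Euler case treated in \cite{Alazard2005limit}, since all potentially dangerous quasilinear terms contributed by the magnetic field appear in $f$ with a prefactor $\eps$ and hence do not affect the principal-symbol-level analysis of $\mu$.
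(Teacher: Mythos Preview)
Your strategy is genuinely different from the paper's, and it has a real gap.

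The paper follows Alazard \cite{Alazard2005limit} and M\'etivier--Schochet \cite{Metivier2001limit} literally: extend $U$ in time, apply the \emph{wave-packet transform} $W^\eps$ in the $t$-variable only, obtaining for each dual time-frequency $\tau$ the stationary system
\[
i\tau E_0(\tilde S)(W^\eps\tilde U)+L(W^\eps\tilde U)=\boldsymbol{F}^\eps\to 0.
\]
From this one derives a second-order equation $P^\eps(t,\tau,\nabla)(W^\eps\tilde q)=o(1)$ with $P^\eps\to P^0(t,\tau,\nabla)=a_0(\tilde S^0)\tau^2+\nabla\cdot(\rho_0^{-1}(\tilde S^0)\nabla)$. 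One then extracts an \emph{operator-valued} defect measure $M(t,\tau)$ on $\R^2_{t,\tau}$ with values in trace-class operators on $L^2(\Omega)$ (Lemma~\ref{lem MS}), and the localization property forces $P^0(t,\tau,\nabla)(1-\Delta_N)^{-1}M=0$. The decisive input is then purely spectral: $P^0(t,\tau,\nabla)(1-\Delta_N)^{-1}$ has trivial $L^2(\Omega)$-kernel (Lemma~\ref{lem ker}), i.e.\ the variable-coefficient Helmholtz operator with Neumann condition has no $L^2$ eigenfunctions. \emph{This} is where H\"ormander \cite[Theorem~17.2.8]{Hormander3} enters, and the entropy decay \eqref{EntropyDecay} is exactly what is needed for that unique-continuation-at-infinity result. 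No bicharacteristic flow, no boundary reflection, no glancing analysis is required.

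Your proposal instead uses a phase-space H-measure on $[0,T]\times\overline\Omega\times S^d$ together with a Liouville transport equation along acoustic bicharacteristics and reflection at $\Sigma$. The gap is the step where you ``invoke H\"ormander \cite[Theorem~17.2.8]{Hormander3}'' to conclude that every bicharacteristic escapes compact sets. That theorem says nothing about ray dynamics; it is a spectral/unique-continuation statement ruling out $L^2$ solutions of $(\Delta+V)u=0$ for potentials with suitable decay. The entropy decay \eqref{EntropyDecay} gives decay of the coefficient perturbation, but it does \emph{not} imply a non-trapping condition for the Hamiltonian $\tau\pm c_0(S^0(t,x))|\xi|$: a variable sound speed in a half-space can certainly support trapped or near-trapped rays (e.g.\ a local minimum of $c_0$ creates a waveguide), and nothing in the hypotheses excludes this. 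Without non-trapping, the transport equation for $\mu$ alone cannot force $\mu=0$ on compacts. The boundary/glancing difficulties you flag are an additional obstacle on top of this.

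In short: your geometric-propagation route would need an extra, unproved non-trapping hypothesis, whereas the paper's operator-valued defect-measure route reduces everything to the injectivity of a fixed elliptic operator on $L^2(\Omega)$, which is precisely what H\"ormander's theorem delivers under \eqref{EntropyDecay}.
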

			This theorem is a slight variant of \cite[Prop. 3.1]{Alazard2005limit}. For the reader's convenience, we outline the proof detailed in \cite{Alazard2005limit} and skip some technical details that are identical to \cite[Prop. 3.1]{Alazard2005limit}. 
		\begin{proof}
		\textbf{Step 1: Wave-packet transform of the variable-coefficient system.}
			One first extends the functions to $t\in \mathbb{R}$ by
			\begin{equation}
				\tilde{U}=\begin{pmatrix}
					\tilde{q}\\
					\tilde{u}
				\end{pmatrix}=\chi_{\eps} U =\begin{pmatrix}
					\chi_{\eps} q\\
					\chi_{\eps}u
				\end{pmatrix},
			\end{equation}
			where $\chi_{\eps}\in C_0^\infty((0,T))$ be a family of functions such that $\chi_{\eps}(t)=1$ for $t\in [\eps^{1/2},T-\eps^{1/2}]$ and $\|\eps\p_t \chi_{\eps}\|_{\infty} \le 2\eps^{1/2}$, and choose extensions $\tilde{S}$ of $S$, supported in $t\in [-1,T+1]$, uniformly bounded in $C(\mathbb{R};H^4(\Omega))$, and converging to $\tilde{S}^0$ in $C(\mathbb{R};H^{4-\delta}_{\mathrm{loc}}(\Omega))$. According to \eqref{EuqU}, $\tilde{U}$ satisfies
			\begin{equation}\label{EuqTildeU}
				\eps E_0(\tilde{S})\p_t \tilde{U} + L \tilde{U}= \eps \tilde{f},
			\end{equation} 
			where $\{\tilde{f}\}_{\eps>0}$ is a bounded family in $C(\mathbb{R};H^3(\Omega))$. 
			
			The above extension to $t\in\R$ is necessary, as we want to apply the wave-packet transform to the variable $t$. Given $\eps>0$, we define the following wave packet transform:
			\begin{equation}
				W^{\eps} v(t,\tau,x)= (2\pi^3)^{-1/4}\eps^{-3/4} \int_{\mathbb{R}} e^{(\mathrm{i} (t-s)\tau -(t-s)^2 )/\eps } v(s,x) \mathrm{d} s,
			\end{equation}
			where $v\in C^1(\mathbb{R}\times\bar{\Om}) \cap L^2(\mathbb{R}\times \Omega)$,  $W^{\eps} v\in C^1(\mathbb{R}^2_{t,\tau}\times\bar{\Om}) \cap L^2(\mathbb{R}^2_{t,\tau}\times \Omega)$ and $W^{\eps}$ extends as an isometry from $L^2(\mathbb{R}\times\Omega)$ to $L^2(\mathbb{R}^2_{t,\tau}\times\Omega)$.
			
			Now, system \eqref{EuqTildeU} can be written in $\mathbb{R}^2_{t,\tau}\times \Omega$ as
			\begin{equation}\label{EquWTildeU}
				\mathrm{i}\tau E_0(\tilde{S})( W^{\eps}\tilde{U}) + L ( W^{\eps}\tilde{U} ) = \boldsymbol{F}^{\eps},
			\end{equation}
			where 
			\begin{align*}
				\boldsymbol{F}^{\eps}=&~\eps W^{\eps} \tilde{f} + [ E_0(\tilde{S}),W^{\eps} ](\eps\p_t) \tilde{U} + E_0(\tilde{S}) ( \mathrm{i}\tau W^{\eps} \tilde{U}- W^{\eps} (\eps\p_t \tilde{U} )  ) \\
				:=&~( F^{\eps}_1, \boldsymbol{F}_2^{\eps} ) \in L^2(\mathbb{R}^2_{t,\tau};H^1(\Omega)  ) \times L^2(\mathbb{R}^2_{t,\tau};(H^1(\Omega))^d ).
			\end{align*}
			Following the arguments in \cite[Lemma 3.3]{Alazard2005limit}, one shows that
			\begin{equation}\label{FepsTo0}
				\boldsymbol{F}^{\varepsilon}\to 0 \text{ in } L^2(\mathbb{R}^2_{t,\tau};H^1(\Omega)) \text{ as } \varepsilon\to 0.
			\end{equation}
			
			\textbf{Step 2: Strong convergence of $q$ via the techniques of microlocal defect measures.} 
			Now we turn to prove the convergence of $q$. Since $W^\eps$ is an isometry, it suffices to prove the strong convergence of $W^\eps\tilde{q}$ in certain function spaces on $\R_{t,\tau}^2\times\Om$. We define
			\begin{align}
				P^{\eps}(t,\tau,\nabla) (\cdot):=&~ a_0(\tilde{S})\tau^2 (\cdot) + \nabla\cdot ( \rho_0^{-1}(\tilde{S}) \nabla(\cdot) ),\\
				P^{0}(t,\tau,\nabla) (\cdot):=&~ a_0(\tilde{S}^0)\tau^2 (\cdot) + \nabla\cdot ( \rho_0^{-1}(\tilde{S}^0) \nabla(\cdot) ),\\
				\Theta^{\eps}:=&~ (1 - \Delta) (W^{\eps} \tilde{q}) \in L^2(\mathbb{R}^2_{t,\tau}\times \Omega) .
			\end{align} It should be noted that $P^\eps$ is actually the wave-packet transform of the wave operator $a_0\eps^2\p_t^2 (\cdot) + \nab\cdot(\rho_0^{-1}(S)\nab(\cdot))$. From \eqref{EquWTildeU}, we can compute that
			\begin{equation}
				P^{\eps}(t,\tau,\nabla) (W^{\eps}\tilde{q})= -\mathrm{i}\tau F_1^{\eps} + \nabla\cdot( \rho_0^{-1}(\tilde{S}) \boldsymbol{F}_2^{\eps}  ).
			\end{equation} Since this is a boundary-value problem, we shall decompose $W^\eps \tilde{q}$ into its interior part and boundary part. Following \cite{Alazard2005limit}, we define
			 \begin{align}\label{decompose Wq}
				W^\eps \tilde{q} = (1-\lap_N)^{-1}\Theta^\eps + \mathfrak{N} (\boldsymbol{F}_2^\eps\cdot N),
			\end{align}where $\Theta:=(1-\lap)(W^\eps\tilde{q})$ and $(1-\Delta_N)^{-1}$ is defined by
			\begin{equation*}
				f=(1-\Delta_N)^{-1}g \text{ if and only if } (1-\Delta)f=g \text{ in }\Omega,
				\text{ and } \p_N f=0 \text{ on }\Sigma;	
			\end{equation*}and $\mathfrak{N}$ is defined by
			\begin{equation*}
				h=\mathfrak{N}(g) \text{ if and only if } (1-\Delta)h=0 \text{ in }\Omega,
				\text{ and } \p_N h=g \text{ on }\Sigma.
			\end{equation*} It should be noted that $(1-\lap_N)^{-1}$ is a bounded linear operator from $L^2(\Om)$ to $H^2(\Om)$ and $\mathfrak{N}$ is a bounded linear operator from $H^{\frac12}(\Sigma)$ to $H^2(\Om)$.
			
			To prove the strong convergence of $\Theta$, which is now only a uniformly bounded family in $L^2(\R^2\times\Om)$, we need the following two lemmas.
			\begin{lem}[M\'etivier-Schochet {\cite[Lemma 4.3]{Metivier2001limit}}]\label{lem MS}
				For all uniformly bounded family $\{\Theta^\eps\}\subset L^2(\R^{2+d})$, there is a subsequence such that there exists a finite non-negative Borel measure $\mu$ on $\R^2$ and $M\in L^1(\R^2,\mathcal{L}_+,\mu)$ such that for all $\Phi\in C_0(\R^2;\mathcal{K})$,
				\[
				\int_{\R^2}\left( \Phi \Theta^\eps, \Theta^\eps \right)_{L^2}\dt\dtau \xrightarrow{\eps \to 0}\int_{\R^2} \text{Tr}(\Phi(t,\tau)M(t,\tau))\mu(\dt,\dtau).
				\]Here $\mathcal{K}$ ($\mathcal{L}_+$, resp.) denotes the set of compact operators (non-negative self-adjoint trace class operators, resp.) on $L^2(\Om)$.
			\end{lem}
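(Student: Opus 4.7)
The plan is to identify the limit as a weak-$*$ accumulation point in the dual of $C_0(\R^2;\mathcal{K})$ and then disintegrate it against a scalar measure via the duality $\mathcal{K}^*=\mathcal{L}^1$ (trace class). First I would set
\[
\ell^\eps(\Phi):=\int_{\R^2}\left(\Phi(t,\tau)\Theta^\eps(t,\tau,\cdot),\Theta^\eps(t,\tau,\cdot)\right)_{L^2(\Om)}\dt\dtau,\qquad \Phi\in C_0(\R^2;\mathcal{K}),
\]
which is a continuous linear functional on $C_0(\R^2;\mathcal{K})$ satisfying the uniform bound $|\ell^\eps(\Phi)|\le \|\Phi\|_\infty\|\Theta^\eps\|_{L^2(\R^{2+d})}^2\le C\|\Phi\|_\infty$. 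Since $\mathcal{K}$ is separable, the Banach space $C_0(\R^2;\mathcal{K})$ is separable (as $\R^2$ is $\sigma$-compact), so by Banach--Alaoglu I can extract a subsequence along which $\ell^\eps\to \ell$ in the weak-$*$ topology.

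Next I would identify $\ell$ as a trace-class--valued measure. By the vector-valued Riesz representation theorem (cf.\ Dinculeanu, \emph{Vector Measures}), the dual of $C_0(\R^2;\mathcal{K})$ is isometrically isomorphic to the space of regular Borel measures on $\R^2$ with values in $\mathcal{K}^*=\mathcal{L}^1(L^2(\Om))$ (the trace-class operators), the duality pairing being the trace. Thus there exists a $\mathcal{L}^1$-valued measure $\nu$ of finite total variation with
\[
\ell(\Phi)=\int_{\R^2}\text{Tr}\left(\Phi(t,\tau)\,d\nu(t,\tau)\right).
\]
Positivity of $\nu$ follows by testing against pointwise self-adjoint non-negative $\Phi\in C_0(\R^2;\mathcal{K})$: for such $\Phi$, $(\Phi(t,\tau)\Theta^\eps,\Theta^\eps)_{L^2(\Om)}\ge 0$, so $\ell^\eps(\Phi)\ge 0$ and hence $\ell(\Phi)\ge 0$. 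This forces $\nu(A)\in\mathcal{L}_+$ for every Borel $A\subset\R^2$.

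Finally I would take $\mu$ to be the scalar total-variation measure $|\nu|$ on $\R^2$ (finite since $\nu$ has finite total variation), and invoke a Radon--Nikodym theorem for $\mathcal{L}^1$-valued measures absolutely continuous with respect to a finite scalar measure (since the separable dual space $\mathcal{L}^1$ has the Radon--Nikodym property). This yields a Bochner-measurable density $M:\R^2\to\mathcal{L}_+$ with $M\in L^1(\R^2,\mathcal{L}_+,\mu)$ such that $d\nu=M\,d\mu$. Substituting back gives the claimed identity.

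The main obstacle is the Radon--Nikodym step in infinite dimensions: one must justify that the weak-$*$ accumulation point really admits a density with values in the trace-class ideal (rather than merely in the bounded operators), which rests on the Radon--Nikodym property of $\mathcal{L}^1=\mathcal{K}^*$ (valid because $\mathcal{L}^1$ is separable and the dual of a separable Banach space). A secondary subtlety is the verification of the vector-valued Riesz representation in the non-reflexive setting (since $\mathcal{K}$ is not reflexive): this requires using that $\mathcal{K}$ is a separable Banach space and that the measure $\nu$ is countably additive in the norm topology of $\mathcal{L}^1$, both of which follow from the construction via weak-$*$ limits of the quadratic forms $\ell^\eps$.
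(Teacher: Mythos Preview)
The paper does not supply its own proof of this lemma; it is quoted verbatim from M\'etivier--Schochet \cite[Lemma 4.3]{Metivier2001limit} and used as a black box. Your outline is a correct reconstruction of the standard argument behind such operator-valued defect-measure statements: uniform boundedness of the quadratic forms $\ell^\eps$ on $C_0(\R^2;\mathcal{K})$, sequential weak-$*$ compactness via separability of $C_0(\R^2;\mathcal{K})$, identification of the limit as an $\mathcal{L}^1$-valued measure through the duality $\mathcal{K}^*=\mathcal{L}^1$, positivity from testing against $\Phi(t,\tau)\ge 0$, and finally disintegration $d\nu=M\,d\mu$ using the Radon--Nikodym property of $\mathcal{L}^1$ (which holds because $\mathcal{L}^1$ on a separable Hilbert space is itself separable and is the dual of the separable space $\mathcal{K}$). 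The two points you flag as obstacles are genuine but resolvable exactly as you indicate, and this is essentially how the original M\'etivier--Schochet proof proceeds.
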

			
			\begin{lem}[M\'etivier-Schochet {\cite[Lemma 5.1]{Metivier2001limit}}]\label{lem ker}
				The operator $P^0(t,\tau,\nab)(1-\lap_N)^{-1}=0$ is a 1-1 mapping for any $(t,\tau)\in\R^2$, that is,
				\begin{align}
					\ker_{L^2(\Omega)} ( P^0(t,\tau,\nabla)(1-\Delta_N)^{-1}  ) =0, \quad \forall (t,\tau)\in \mathbb{R}^2\label{KerP0}
				\end{align}
			\end{lem}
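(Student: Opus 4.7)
The plan is to reduce the kernel claim to a uniqueness statement for a Helmholtz-type elliptic problem on the half-space. Given $v \in L^2(\Om)$ with $P^0(t,\tau,\nab)(1-\lap_N)^{-1}v = 0$, I would set $w := (1-\lap_N)^{-1}v \in H^2(\Om)$ so that $\p_N w|_\Sigma = 0$ and $v = (1-\lap)w$. Thus establishing $w\equiv 0$ is equivalent to \eqref{KerP0}, and $w$ satisfies
\begin{equation}
a_0(\tilde S^0)\tau^2 w + \nab\cdot(\rho_0^{-1}(\tilde S^0)\nab w) = 0 \text{ in } \Om, \q \p_N w|_\Sigma = 0.
\end{equation}
I would split the argument into two cases according to whether $\tau$ vanishes.

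When $\tau = 0$, a direct energy identity suffices: multiplying by $\overline{w}$ and integrating by parts over $\Om$, the boundary term vanishes thanks to the homogeneous Neumann condition, leaving $\int_\Om \rho_0^{-1}(\tilde S^0)|\nab w|^2\dx = 0$. Since $\rho_0^{-1}$ is strictly positive, $\nab w \equiv 0$, so $w$ is constant; the $H^2$ requirement on the unbounded domain $\Om=\R^d_+$ then forces this constant to be zero, giving $v = 0$.

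When $\tau \neq 0$, the equation is a Helmholtz equation with positive spectral parameter and variable coefficients tending to constants at infinity. I would extend $w$ to $\R^d$ by even reflection $\tilde w(x',x_d) := w(x',|x_d|)$; the Neumann compatibility $\p_d w|_\Sigma = 0$ ensures $\tilde w \in H^2(\R^d)$. Reflecting $\tilde S^0$ similarly and normalising the principal part of the equation, the extended $\tilde w$ satisfies a second-order elliptic equation of the form
\begin{equation}
\lap \tilde w + b(x)\cdot\nab \tilde w + c(x)\tilde w = 0,\q c(x)\to c_\infty := \tau^2 a_0(0)\rho_0(0) > 0 \text{ as } |x|\to\infty,
\end{equation}
where $b(x)$ is controlled by $\nab \tilde S^0$ and $c(x) - c_\infty$ by $\tilde S^0$. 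Propagating the decay \eqref{EntropyDecay} along the transport equation $\p_t S^0 + u^0\cdot\nab S^0 = 0$, the coefficients $b$ and $c-c_\infty$ inherit the algebraic decay of $\nab S^0$ and $S^0$ respectively, which verifies the short-range hypothesis of H\"ormander's uniqueness-at-infinity theorem \cite[Theorem 17.2.8]{Hormander3}, a Rellich–Kato–Agmon–Simon type statement guaranteeing that $L^2$ solutions of such elliptic equations vanish identically. Hence $\tilde w \equiv 0$, so $w \equiv 0$ and $v = 0$.

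The delicate step is the $\tau \neq 0$ case, specifically verifying the decay hypotheses required by H\"ormander's theorem. One must confirm that the propagated entropy inherits enough spatial decay \emph{uniformly} on $[0,T]$; this is exactly what \eqref{EntropyDecay} is imposed for, and transport by the bounded divergence-free velocity $u^0$ preserves the algebraic decay rates via the flow map. A secondary point is that the even reflection of $\tilde S^0$ is only Lipschitz across $\Sigma$ in general, but this regularity is harmless since H\"ormander's theorem permits coefficients of low regularity; the essential input is the decay at infinity rather than smoothness near $\Sigma$.
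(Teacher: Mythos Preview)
Your proposal is correct and follows the approach of the cited reference \cite[Lemma 5.1]{Metivier2001limit} (adapted to the half-space by even reflection, in the spirit of Alazard \cite{Alazard2005limit}); the paper itself does not give a proof but simply defers to that reference, noting in the subsequent remark that the entropy decay \eqref{EntropyDecay} and the unboundedness of $\Om$ are the essential inputs---precisely the two ingredients you invoke for the $\tau\neq 0$ case via H\"ormander \cite[Theorem 17.2.8]{Hormander3}.
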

			\begin{rmk}
				We note that the first lemma give the description of ``the lacking of compactness in $L^2$". For the second lemma, its proof (see \cite[Lemma 5.1]{Metivier2001limit}) requires the entropy decay condition \eqref{EntropyDecay} and the unboundedness of the domain $\Om$. 
			\end{rmk}
			Let $M(t,\tau)$ be the trace-class operator and $\mu$ be the microlocal defect measure obtained in Lemma \ref{lem MS} by inserting $\Theta^\eps$ defined in \eqref{decompose Wq}. Then we can prove 
			\begin{align}
					M(t,\tau)=0 \quad \mu\text{-a.e.}, \label{P0Ma.e.}
			\end{align} whose details can be found in \cite[Corollary 4.4]{WZ2024elasto}. Therefore, for the uniformly bounded family $\{\Theta^\eps\}$ defined in \eqref{decompose Wq}, we actually prove that
			
			\[
				\int_{\R^2}\left( \Phi \Theta^\eps, \Theta^\eps \right)_{L^2}\dt\dtau \xrightarrow{\eps \to 0}0
			\] holds for any $\Phi\in C_0(\R^2;\mathcal{K})$ where $\mathcal{K}$ denotes the set of compact operators on $L^2(\Om)$.

			We now set $\Phi(t,\tau)=\varphi(t,\tau)K^*K$ for $\varphi\in C_0(\R^2)$ and $K\in\mathcal{K}$ in the above convergence result to get
			\begin{equation}
				\varphi K\Theta^{\eps} \to 0 \text{ in } L^2(\mathbb{R}^2_{t,\tau}\times\Omega)
			\end{equation}holds for any $K\in \mathcal{K}$ and $\varphi\in C_0(\R^2)$.
			Following the arguments in \cite[(3.23)-(3.24)]{Alazard2005limit}, we prove this convergence holds for $\varphi(t,\tau)=1$, i.e. for any $K\in \mathcal{K}$,
			\begin{equation}\label{ThetaCovergence1}
				K \Theta^{\eps} \to 0 \text{ in } L^2(\mathbb{R}^2_{t,\tau}\times\Omega).
			\end{equation}
			Recall that, by the definition of $\Theta=(1-\Delta) (W^{\eps} \tilde{q})$, $W^{\eps}$ is an isometry from $L^2(\mathbb{R}_t\times\Omega)$ to $L^2(\mathbb{R}^2_{t,\tau}\times \Omega)$, and $W^{\eps}$ commutes with $K(1-\Delta)$. So \eqref{ThetaCovergence1} implies that for any $K\in \mathcal{K}$,
			\begin{equation}\label{qConvergence1}
				K(1-\Delta)\tilde{q}\to 0 \text{ in } L^2(\mathbb{R}\times\Omega).
			\end{equation}
			Given that $\tilde{q}$ is bounded in $L^2(\mathbb{R};H^4(\Omega))$, the convergence \eqref{qConvergence1} implies
			\begin{equation}
				\tilde{q}\to 0 \text{ in } L^2(\mathbb{R};H^{4-\delta}_{\mathrm{loc}}(\Omega)).
			\end{equation}
			Since the limit is $0$, the convergence holds without passing a subsequence. We end up with
			\begin{equation}
				q\to 0 \text{ in } L^2([0,T];H^{4-\delta}_{\mathrm{loc}}(\Omega)).
			\end{equation}
			Similarly, from \eqref{EquWTildeU}, we can prove
				\begin{equation}
					\eps\p_t q\to 0 \text{ in } L^2([0,T];H^{3-\delta}_{\mathrm{loc}}(\Omega)),
				\end{equation}
			which leads to
			\begin{equation}
				\nabla \cdot u = -a D_t q\to 0 \text{ in } L^2([0,T];H^{3-\delta}_{\mathrm{loc}}(\Omega)).
			\end{equation}
			
		\end{proof}
		
		\subsection{The strong convergence to the limit system}
		We continue our proof of Theorem \ref{main thm, limit}. Recall that $\mathcal{P}$ be the projection onto $H_{\sigma}$ and $\mathcal{Q}= I_3 - \mathcal{P}$, where $H_\sigma=\{u\in L^2(\Omega): \int_{\Om} u\cdot\nabla\phi,\ \forall \phi\in H^1(\Omega)\}$ and $G_{\sigma}=\{\nabla\psi: \psi\in H^1(\Omega)\}$ give the orthogonal decomposition $L^2(\Omega)=H_{\sigma} \oplus G_{\sigma}$. 
		
		From the strong convergence results \eqref{StrongLim_curlrho0u} and \eqref{StrongLim_divu}, we know that
		\begin{align}
			\mathcal{P}( \rho_0(S)u )\to \mathcal{P}( \rho_0(S^0)u^0 ) &\text{ in } L^2([0,T];H^{4-\delta}_{\mathrm{loc}}(\Omega)),\\
			\mathcal{Q}u\to \mathcal{Q}u^0=0 &\text{ in } L^2([0,T];H^{4-\delta}_{\mathrm{loc}}(\Omega)).\label{StrongLim_Qu}
		\end{align}
		The previous two properties yields further that:
		\begin{align}
			\mathcal{P}( \rho_0(S)\mathcal{P}u ) \to \mathcal{P}(\rho_0(S^0)\mathcal{P}u^0) &\text{ in } L^2([0,T];H^{4-\delta}_{\mathrm{loc}}(\Omega)),\\
			\mathcal{P}( \rho_0(S)\mathcal{Q}u ) \to \mathcal{P}(\rho_0(S^0)\mathcal{Q}u^0)=0 &\text{ in } L^2([0,T];H^{4-\delta}_{\mathrm{loc}}(\Omega)),
		\end{align}
		which, combined with the fact $S\to S^0$ in $C([0,T];H^{4-\delta}_{\mathrm{loc}}(\Omega))$, imply that:
		\begin{align*}
			\mathcal{P}( \rho_0(S^0)\mathcal{P}(u-u^0) )=&~ \mathcal{P}( \rho_0(S^0)[ (u-u^0)- \mathcal{Q}u ]  ) \\
			=&~\mathcal{P}\left(  \rho_0(S)(u-u^0) + ( \rho_0(S^0)-\rho_0(S) )(u-u^0) - \rho_0(S) \mathcal{Q} u + (\rho_0(S^0)-\rho_0(S))\mathcal{Q} u  \right)\\
			\to &~0 \text{ in } L^2([0,T];H^{4-\delta}_{\mathrm{loc}}(\Omega)).
		\end{align*}
		Consequently, we find that, by noticing $\rho_0(S^0)$ positive in $[0,T]\times \Omega$, $\mathcal{P}u\to \mathcal{P}u^0= u^0$ in $L^2([0,T];H^{4-\delta}_{\mathrm{loc}}(\Omega))$, which, combined with \eqref{StrongLim_Qu}, implies that
		\begin{equation}\label{StrongLim_u}
			u\to u^0 \text{ in } L^2([0,T];H^{4-\delta}_{\mathrm{loc}}(\Omega)).
		\end{equation}
		By \eqref{StrongLim_S_B} and \eqref{StrongLim_u}, we obtain
		\begin{align*}
			\rho(\eps q,S)\to \rho_0(S^0) &\text{ in } C([0,T];H^{4-\delta}_{\mathrm{loc}}(\Omega)),\\
			\nabla u\to \nabla u^0 &\text{ in } L^2([0,T];H^{3-\delta}_{\mathrm{loc}}(\Omega)),\\
			\nabla B\to \nabla B^0 &\text{ in } L^2([0,T];H^{3-\delta}_{\mathrm{loc}}(\Omega)).
		\end{align*}
		Passing to the limit in the equations for $S$ and $B$, we see that the limits $S^0$ and $B^0$ satisfy
		\begin{equation*}
			(\partial_t +u^0\cdot\nabla) S^0=0,\quad (\partial_t + u^0\cdot\nabla)B^0 = B^0 \cdot\nabla u^0,\quad \nabla\cdot B^0=0
		\end{equation*}
		in the sense of distributions. Since $\rho(\eps q, S)- \rho_0(S)=O(\eps)$, we have
		\begin{align*}
			\rho(\eps q, S) D_t u=&~ (\rho(\eps q, S)- \rho_0(S))D_t u + \p_t ( \rho_0(S)u ) + (u\cdot \nabla)( \rho_0(S) u )\\
			\to &~\rho_0(S^0) ( (\p_t +u^0\cdot\nabla)u^0  )
		\end{align*}
		in the sense of distributions. Applying the operator $\mathcal{P}$ to the momentum equations $\rho D_t u + \eps^{-1}\nabla q +B\times(\nabla\times B)=0$ and then taking to the limit, we conclude that
		\begin{equation*}
			\mathcal{P}\left[  \rho_0(S^0)( (\p_t +u^0\cdot\nabla)u^0 + B^0\times(\nabla\times B^0) )  \right]=0.
		\end{equation*}
		Therefore, $(u^0,B^0,S^0)\in C([0,T];H^4(\Om))$ solves the incompressible MHD equations together with a transport equation
		\begin{equation}
			\begin{cases}
				\varrho(\p_t u^0 + u^0\cdot\nab u^0) -B^0\cdot\nab B^0+ \nab (\pi+\frac12|B^0|^2) =0&~~~ \text{in}~[0,T]\times \Omega,\\
				\p_t B^0+ u^0\cdot\nab B^0 -B^0\cdot\nab u^0=0 &~~~ \text{in}~[0,T]\times \Omega,\\
				\nab\cdot u^0=\nab\cdot B^0=0&~~~ \text{in}~[0,T]\times \Omega,\\
				\p_t S^0+u^0\cdot\nab S^0=0&~~~ \text{in}~[0,T]\times \Omega,\\
				u_d^0=B_d^0=0&~~~\text{on}~[0,T]\times\Sigma,
			\end{cases}
		\end{equation}
		for a suitable fluid pressure function $\pi$ satisfying $\nab\pi\in C([0,T];H^3(\Om))$. Here $\varrho$ satisfies $\p_t\varrho+u^0\cdot\nab\varrho=0,$ with initial data $\varrho_0:=\rho(0,S_0^0)$. Using the same arguments as in the proof of \cite[Theorem 1.5]{Metivier2001limit}, we find that 
		\begin{equation*}
			(u^0,B^0,S^0)|_{t=0}=(w_0, B^0_0,S^0_0),
		\end{equation*}
		where $w_0 \in H^4(\Omega)$ is determined by
		\begin{equation*}
			w_{0d}|_{\Sigma}=0,\quad \nabla\cdot w_0=0,\quad \nabla\times( \rho_0(S_0^0)w_0 )=\nabla\times( \rho_0(S_0^0)u_0^0 ).
		\end{equation*}
		Moreover, the uniqueness of the limit function implies that the convergence holds as $\eps\to 0$ without restricting to a subsequence. Theorem \ref{main thm, limit} is then proven.

		\paragraph*{Acknowledgment.} 
		Qiangchang Ju is supported by the National Natural Science Foundation of China (Grants 12131007). Jiawei Wang is supported by the National Natural Science Foundation of China (Grants 12131007) and the Basic Science Center Program (No: 12288201) of the National Natural Science Foundation of China.
		
		\paragraph*{Data avaliability.} This manuscript has no associated data.

		\subsection*{Ethics Declarations}
		\paragraph*{Conflict of interest.} The authors declare that there is no conflict of interest.

		\begin{appendix}
			
			\section{Preliminary lemmas and commutator estimates}

			
			In this section, we recall some basic identities and estimates. First, we record the Hodge-type elliptic estimates that are used in the div-curl analysis.
			
			\begin{lem}[Hodge elliptic estimates]\label{hodgeTT}
				For any sufficiently smooth vector field $X\in\R^3$ and any real number $s\geq 1$, one has
				\begin{equation}
					\|X\|_s^2\lesssim \|X\|_0^2+\|\nab\cdot X\|_{s-1}^2+\|\nab\times X\|_{s-1}^2 + | X\cdot N|_{s-1/2}^2.
				\end{equation}  When $X\in\R^2$, we shall replace $\nab\times X$ by $\nab^\perp \cdot X$ with $\nab^\perp:=(-\p_2,\p_1)^\mathrm{T}.$
			\end{lem}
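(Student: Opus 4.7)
\textbf{Proof proposal for Lemma \ref{hodgeTT}.}

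The plan is to argue by induction on $s$, starting from the base case $s=1$, and at each stage reduce a normal derivative to a combination of tangential derivatives, divergence, and curl. First, for the base case $s=1$, I would exploit the pointwise Friedrichs-type identity
\[
\p_i X_j \p_i X_j = (\nab\cdot X)^2 + |\nab\times X|^2 + \p_i(X_j\p_j X_i) - \p_j(X_j \p_i X_i),
\]
integrate over $\Om=\mathbb{R}^3_+$, and observe that the divergence terms on the right produce a boundary contribution of the form $\int_{\Sigma} X_3(\p_j X_j) - X_j \p_j X_3 \,\mathrm{d}S$, which by trace inequalities and Young's inequality is controlled by $|X\cdot N|_{1/2}^2+\eta\|X\|_1^2$ for any small $\eta>0$. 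Combining with $\|X\|_0^2$ on the left and absorbing the $\eta\|X\|_1^2$ term yields the estimate for $s=1$.

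For the inductive step, I would separate tangential and normal derivatives. Since $\TP=(\p_1,\p_2)$ commutes with $\nab\cdot,\nab\times$ and with the trace operator, applying $\TP$ to $X$ and invoking the induction hypothesis at level $s-1$ immediately controls $\|\TP X\|_{s-1}^2$ by the right-hand side. For the normal derivative, I would use the componentwise identities
\[
\p_3 X_3 = \nab\cdot X - \p_1 X_1 - \p_2 X_2,\q \p_3 X_1 = (\nab\times X)_2 + \p_1 X_3,\q \p_3 X_2 = -(\nab\times X)_1 + \p_2 X_3,
\]
which express $\p_3 X$ in terms of $\nab\cdot X$, $\nab\times X$, and tangential derivatives of $X$. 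Thus $\|\p_3 X\|_{s-1}\lesssim \|\nab\cdot X\|_{s-1}+\|\nab\times X\|_{s-1}+\|\TP X\|_{s-1}$, and the latter is handled by the tangential estimate. Iterating this reduction allows me to control every normal derivative up to order $s$ in terms of lower-order normal derivatives (all of which are in turn controlled by the induction hypothesis) together with the three quantities on the right-hand side.

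The main technical obstacle I expect is the careful bookkeeping of the boundary term at the base step: one must avoid losing control of the full $H^{1/2}$ trace of $X$ (which is not assumed) and extract only the normal component $X\cdot N$. This is achieved by observing that the problematic surface integrand, after algebraic manipulation, only involves the product of $X_3=-X\cdot N$ with a tangential first-order derivative of $X$, so that only $|X\cdot N|_{1/2}$ appears after applying the trace theorem and duality $H^{1/2}\times H^{-1/2}$ with an interior-controlled factor. The two-dimensional version follows identically, with the scalar $\nab^\perp\cdot X$ replacing the vector $\nab\times X$ in the Friedrichs identity and in the componentwise reduction of $\p_2 X$.
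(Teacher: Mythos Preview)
The paper does not actually prove this lemma; it merely records it in the appendix as a known Hodge-type elliptic estimate, citing it as one of the ``basic identities and estimates'' used in the div-curl analysis. So there is no paper proof to compare against.

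That said, your proposal is a correct and standard route to the result. Your base case works because, as you note, the boundary integrand $X_3(\nab\cdot X)-X_j\p_j X_3$ simplifies after the $X_3\p_3 X_3$ terms cancel to an expression involving only $X_3$ paired against \emph{tangential} derivatives of $X_1,X_2$; one integration by parts along $\Sigma$ then lets you estimate by $|X\cdot N|_{1/2}|X|_{1/2}\lesssim C_\eta|X\cdot N|_{1/2}^2+\eta\|X\|_1^2$ via trace and duality. Your inductive step is also sound: tangential derivatives commute with $\nab\cdot$, $\nab\times$, and the trace, while the algebraic identities you wrote down reduce each $\p_3 X_i$ to a component of $\nab\cdot X$ or $\nab\times X$ plus a tangential derivative of $X$. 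One small point: your induction handles integer $s\ge1$; the claim for all real $s\ge1$ then follows by complex interpolation between consecutive integers, which you may want to state explicitly.
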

			
			\begin{lem}[Reynolds transport formula]\label{lem transport}
				Under the setting of Theorem \ref{main thm, ill data}, for each $f(t,x)$ satisfying $f\in L^2(\Om)$ and $D_t f\in L^2(\Om)$, the Reynolds transport formula holds
				\begin{equation}
					\frac12\ddt\io\rho |f|^2\dx=\io \rho (D_t f) f \dx.
				\end{equation} 
				Moreover, for $0\leq t\leq T$ with $T>0$ a given number, we have  
				\begin{align}
				\|f(t,\cdot)\|_0^2 \lesssim \|f(0,\cdot)\|_0^2 + \int_0^t \|D_t f(\tau,\cdot)\|_0^2\dtau
				\end{align}
			\end{lem}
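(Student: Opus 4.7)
The plan is to directly compute $\ddt \io \rho |f|^2 \dx$ by the product rule, use the continuity equation $\p_t \rho + \nab\cdot(\rho u)=0$ (the first equation of \eqref{CMHD}) to rewrite $\p_t\rho$, integrate by parts in the spatial variables, and then invoke the slip boundary condition $u_3|_{\Sigma}=0$ to kill the resulting boundary term. This is the classical argument; the only mildly nontrivial point is checking that the boundary contribution vanishes, which is immediate from $u_d|_\Sigma=0$.

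Concretely, I would write
\begin{align*}
\ddt \io \rho |f|^2 \dx &= \io \p_t\rho\,|f|^2 \dx + 2\io \rho f\,\p_t f\dx\\
&= -\io \nab\cdot(\rho u)\,|f|^2 \dx + 2\io \rho f\,\p_t f\dx,
\end{align*}
and then integrate by parts in the first term. The boundary contribution on $\Sigma$ is $-\is \rho u_3 |f|^2 \dS$, which vanishes since $u_3|_\Sigma=0$. What remains is
\begin{align*}
\ddt \io \rho |f|^2 \dx = \io \rho u\cdot\nab(|f|^2)\dx + 2\io \rho f\,\p_t f\dx = 2\io \rho f\,(\p_t f + u\cdot\nab f)\dx = 2\io \rho f\,D_t f\dx,
\end{align*}
which gives the identity upon dividing by $2$. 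If $f$ is only assumed to be $L^2$ with $D_t f\in L^2$ (not pointwise smooth), one justifies the calculation by a standard mollification argument in $t$ and $x$, using that the continuity equation provides the required regularity of $\rho$ from the setting of Theorem \ref{main thm, ill data}.

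For the second assertion, recall that the hypothesis \eqref{EoS} gives $\rho\ge\bar{\rho}_0>0$, and $\|\rho\|_\infty$ is bounded uniformly on $[0,T]$ by the uniform estimates from Theorem \ref{main thm, ill data}; hence $\io \rho|f|^2\dx$ is equivalent to $\|f\|_0^2$. Applying Cauchy--Schwarz (or $2ab\le a^2+b^2$ with $a=\sqrt{\rho}f$, $b=\sqrt{\rho}D_t f$) to the identity yields
\[
\ddt \io \rho|f|^2 \dx \le \io \rho|f|^2\dx + \io \rho|D_t f|^2\dx,
\]
so Grönwall's inequality on $[0,T]$ gives $\io \rho|f(t)|^2\dx \le e^T\io \rho|f(0)|^2\dx + \int_0^t e^{T-\tau}\io \rho|D_t f(\tau)|^2\dx \dtau$. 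Absorbing the $e^T$ factor and the equivalence constants between $\|f\|_0^2$ and $\io\rho|f|^2\dx$ into the implicit constant in $\lesssim$ yields the desired bound. There is no serious obstacle; the whole argument is a bookkeeping exercise once the boundary term is seen to drop.
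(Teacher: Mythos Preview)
Your proof is correct and follows essentially the same approach as the paper: for the identity, the paper likewise appeals to the continuity equation (written there as $D_t\rho+\rho\nab\cdot u=0$) and a direct calculation, and for the inequality it uses the equivalence $\|f\|_0^2\sim\io\rho|f|^2\dx$ together with Cauchy--Schwarz on the transport identity. The only cosmetic difference is that the paper closes the second estimate by a take-sup-and-absorb argument (Young with a small $\delta$, then absorb $\delta\sup_{[0,T]}\|f\|_0^2$) rather than Gr\"onwall, but the two are interchangeable here.
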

			\begin{proof}
			The Reynolds transport formula is proved by direct calculation and using the continuity equation $D_t\rho +\rho\nab\cdot u = 0$. For the second formula, we use the non-degeneracy of $\rho$ ($1\lesssim\rho\lesssim 1$) to compute that
			\begin{align*}
			\|f(t,\cdot)\|_0^2 \lesssim&~ \io \rho |f(t,x)|^2\dx = \io \rho |f(0,x)|^2\dx + \int_0^t \io 2 \rho (D_t f(\tau,x)) f(\tau,x) \dx\dtau\\
			\leq&~\|f(0,\cdot)\|_0^2 + \int_0^t \|D_t f(\tau,\cdot)\|_0 \|f(\tau,\cdot)\|_0 \|\rho\|_\infty \dtau\\
			\lesssim&~\|f(0,\cdot)\|_0^2 + \int_0^t \|D_t f(\tau,\cdot)\|_0 \|f(\tau,\cdot)\|_0  \dtau
			\end{align*}
			Using Young's inequality, $ \|D_t f(\tau,\cdot)\|_0 \|f(\tau,\cdot)\|_0\leq \delta\|f(\tau,\cdot)\|_0^2 + (4\delta)^{-1}\|D_t f(\tau,\cdot)\|_0^2$. Therefore, we get for 
			\begin{align*}
			\sup_{0\leq t\leq T}\|f(t,\cdot)\|_0^2 \lesssim \delta \sup_{0\leq t\leq T}\|f(t,\cdot)\|_0^2  + \|f(0,\cdot)\|_0^2 + \int_0^t \|D_t f(\tau,\cdot)\|_0^2  \dtau.
			\end{align*}Pick $\delta>0$ suitably small such that the $\delta$-term is absorbed by the left side. Then we get
			\[
			\|f(t,\cdot)\|_0^2\leq \sup_{0\leq t\leq T}\|f(t,\cdot)\|_0^2 \lesssim  \|f(0,\cdot)\|_0^2 + \int_0^t \|D_t f(\tau,\cdot)\|_0^2  \dtau.
			\]
			\end{proof}
			
			The next lemma reveals the structure of the commutator between higher-order material derivatives and spatial derivatives.
			\begin{lem}[{\cite[Section 4]{Luo2018CWW}}]\label{Prop_AGU}
				
				Let $[\p, D_t]=(\p u)\tilde{\cdot}\p$, where the symmetric dot product $(\p u)\tilde{\cdot} \p$ is define component-wisely by $((\p u)\tilde{\cdot}\p)_i=\p_i u_k \p_k$. In general, we have
				\begin{align}
					[\p, D_t^k]=&~ \sum_{l_1+l_2=k-1} c_{l_1,l_2} ( \p D_t^{l_1}u )\tilde{\cdot}(\p D_t^{l_2}) + \sum_{l_1+\cdots+l_n=k-n+1 \atop n\ge 3} d_{l_1,\cdots,l_n}(\p D_t^{l_1}u)\cdots(\p D_t^{l_{n-1}}u)(\p D_t^{l_n})\notag\\
					=&~ (\p D_t^{k-1} u)\tilde{\cdot} \p + k (\p u)\tilde{\cdot}( \p D_t^{k-1} ) \notag\\
					&~+\sum_{l_1+l_2=k-1\atop l_1,l_2 >0 } c_{l_1,l_2} ( \p D_t^{l_1}u )\tilde{\cdot}(\p D_t^{l_2}) + \sum_{l_1+\cdots+l_n=k-n+1 \atop n\ge 3} d_{l_1,\cdots,l_n}(\p D_t^{l_1}u)\cdots(\p D_t^{l_{n-1}}u)(\p D_t^{l_n})\notag\\
					=&~ \p( D_t^{k-1} u_i\p_i (\cdot) ) + k \p u_i\p_i D_t^{k-1}(\cdot)\notag\\
					&~\underbrace{- D_t^{k-1} u_i\p_i\p + \sum_{l_1+l_2=k-1\atop l_1,l_2 >0 } c_{l_1,l_2} ( \p D_t^{l_1}u )\tilde{\cdot}(\p D_t^{l_2}) + \sum_{l_1+\cdots+l_n=k-n+1 \atop n\ge 3} d_{l_1,\cdots,l_n}(\p D_t^{l_1}u)\cdots(\p D_t^{l_{n-1}}u)(\p D_t^{l_n}) }_{:= Z^k},
				\end{align}
				where $Z^k=(Z_1^k,Z_2^k,Z_3^k)^\mathrm{T}$. Moreover, we have
				\begin{align}
					[\p, (\eps D_t)^k]=&~ \sum_{l_1+l_2=k-1} c_{l_1,l_2} (\eps \p (\eps D_t)^{l_1}u )\tilde{\cdot}(\p (\eps D_t)^{l_2}) + \sum_{l_1+\cdots+l_n=k-n+1 \atop n\ge 3} d_{l_1,\cdots,l_n}(\eps\p (\eps D_t)^{l_1}u)\cdots(\eps\p(\eps D_t)^{l_{n-1}}u)(\p (\eps D_t)^{l_n})\notag\\
					=&~ (\eps\p (\eps D_t)^{k-1} u)\tilde{\cdot} \p + k (\p u)\tilde{\cdot}(\eps \p (\eps D_t)^{k-1} ) \notag\\
					&~+\sum_{l_1+l_2=k-1\atop l_1,l_2 >0 } c_{l_1,l_2} ( \eps\p (\eps D_t)^{l_1}u )\tilde{\cdot}(\p (\eps D_t)^{l_2}) + \sum_{l_1+\cdots+l_n=k-n+1 \atop n\ge 3} d_{l_1,\cdots,l_n}(\eps\p (\eps D_t)^{l_1}u)\cdots(\eps\p (\eps D_t)^{l_{n-1}}u)(\p (\eps D_t)^{l_n}).
				\end{align} 
			\end{lem}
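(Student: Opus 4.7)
The plan is to proceed by induction on $k$, repeatedly exploiting the recursion
\[
[\p, D_t^k] \;=\; D_t\,[\p, D_t^{k-1}] \;+\; [\p, D_t]\,D_t^{k-1},
\]
together with the base identity $[\p_i, D_t] = (\p_i u_m)\,\p_m$, which follows by a one-line calculation from $D_t = \p_t + u_m\p_m$. The structural claim to carry through the induction is that $[\p, D_t^k]$ is a finite sum of monomials of the form $(\p D_t^{l_1}u)(\p D_t^{l_2}u)\cdots(\p D_t^{l_{n-1}}u)(\p D_t^{l_n})$ with $l_1+\cdots+l_n = k-n+1$, $n\ge 2$, and $l_j\ge 0$. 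This is a clean algebraic statement whose total ``derivative budget'' is preserved under the inductive step.

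For the inductive step, one analyzes how each operation in the recursion acts on a generic monomial $M = (\p D_t^{l_1}u)\cdots(\p D_t^{l_{n-1}}u)(\p D_t^{l_n})$. The commutator $[\p,D_t]D_t^{k-1}$ directly produces the $n=2$ term $(\p u)\tilde{\cdot}(\p D_t^{k-1})$; every other contribution comes from $D_t M$, which by Leibniz is a sum of terms where $D_t$ lands either on one of the factors $\p D_t^{l_j}u$, producing $\p D_t^{l_j+1}u - (\p u)\tilde{\cdot}(\p D_t^{l_j}u)$ via the base identity $[\p,D_t] = (\p u)\tilde{\cdot}\p$, or on the tail $\p D_t^{l_n}$, producing $\p D_t^{l_n+1} - (\p u)\tilde{\cdot}(\p D_t^{l_n})$. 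The first alternative bumps an exponent by one and preserves $n$; the second generates a new factor $(\p u)$, raising $n$ by one. In both cases the homogeneity constraint $\sum l_j = k-n+1$ is preserved, closing the induction.

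To pin down the two leading-order terms, I would isolate the monomials with $n=2$. The $l_1=k-1, l_2=0$ contribution $(\p D_t^{k-1}u)\tilde{\cdot}\p$ arises uniquely from differentiating the factor $\p u$ in the $(k-1)$-th step a total of $k-1$ times via the first alternative above. The $l_1=0, l_2=k-1$ contribution $(\p u)\tilde{\cdot}(\p D_t^{k-1})$ accumulates along the inductive chain: one copy appears each time $[\p,D_t]D_t^{j-1}$ is invoked at level $j\le k$, yielding the combinatorial coefficient $k$. All remaining $n=2$ and $n\ge 3$ terms are gathered into the operator $Z^k$, whose structure (products of $\p D_t^{l_j}u$ factors with $l_j\le k-2$) follows automatically from the induction. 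The reformulation in the second displayed line, which rewrites $(\p D_t^{k-1}u)\tilde{\cdot}\p + k(\p u)\tilde{\cdot}(\p D_t^{k-1})$ as $\p(D_t^{k-1}u_i\p_i\,\cdot) + k\,\p u_i\,\p_i D_t^{k-1}(\cdot) - D_t^{k-1}u_i\p_i\p$, is just an algebraic rearrangement using $\p(fg) = (\p f)g + f(\p g)$, absorbing the correction $-D_t^{k-1}u_i\p_i\p$ into $Z^k$.

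The $\eps$-weighted version is immediate once the unweighted identity is established: since $\eps D_t$ is just $D_t$ with an added scalar, $(\eps D_t)^k = \eps^k D_t^k$, and the homogeneity relation $\sum l_j = k-n+1$ ensures that distributing $\eps^k$ across the $n$ factors $(\eps^{l_j+1}\p(\eps D_t)^{l_j}u)$ and $(\eps^{l_n}\p(\eps D_t)^{l_n})$ uses exactly $\sum (l_j+1)$ powers of $\eps$, matching $k$. The only real obstacle I expect is the bookkeeping of the combinatorial coefficients $c_{l_1,l_2}$ and $d_{l_1,\dots,l_n}$: while their exact values are unimportant for the applications in this paper (only the \emph{form} of the expansion and the coefficient $k$ of the top-order term matter), a careful induction tree must be set up to guarantee that no spurious top-order term is missed and that the claimed leading coefficient is exactly $k$, not $k\pm O(1)$.
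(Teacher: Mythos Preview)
Your inductive argument is correct and is the standard route to this identity. Note, however, that the paper does not supply its own proof of this lemma: it is quoted from \cite[Section~4]{Luo2018CWW} and used as a black box, so there is no independent argument in the paper to compare against. Your approach---telescoping $[\p,D_t^k]=D_t[\p,D_t^{k-1}]+[\p,D_t]D_t^{k-1}$, verifying that the homogeneity $\sum l_j=k-n+1$ is preserved under both ways $D_t$ can act on a monomial, and then isolating the two extreme $n=2$ coefficients---is exactly how such commutator expansions are established (and is presumably what appears in the cited reference). Your count for the coefficient $k$ of $(\p u)\tilde{\cdot}(\p D_t^{k-1})$ via the unfolded telescope $\sum_{j=0}^{k-1}D_t^j[\p,D_t]D_t^{k-1-j}$ is correct, though it is slightly cleaner to obtain it by the one-step recursion $a_k=a_{k-1}+1$. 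The $\eps$-weighted version follows exactly as you say from the degree count $\sum_{j=1}^{n-1}(l_j+1)+l_n=k$.
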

			
			\begin{lem}\label{lem prod comm}
			Given integers $k,l$ satisfying $0\leq l\leq 3$ and $0\leq k\leq 3-l$, under the setting of Theorem \ref{main thm, ill data}, we have that
			\begin{align}
			\left\|[\p, (\eps D_t)^{k+2l}]f\right\|_{3-k-l}^2\leq P(E(0)) + P(E(t))\int_0^t P(E(\tau))\dtau,
			\end{align}
			for $f=u,B,q,S$.
			\end{lem}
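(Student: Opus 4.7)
The plan is a commutator expansion via Lemma \ref{Prop_AGU}, followed by a Moser-type product estimate and the Reynolds transport formula (Lemma \ref{lem transport}) to cast the bound in the stated form.

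First, I would apply Lemma \ref{Prop_AGU} to rewrite $[\p,(\eps D_t)^{k+2l}]f$ as a finite sum of products $(\eps\p(\eps D_t)^{l_1}u)\cdots(\eps\p(\eps D_t)^{l_{n-1}}u)(\p(\eps D_t)^{l_n}f)$ with $l_1+\cdots+l_n=k+2l-n+1$, and then distribute $\p^{3-k-l}$ via Leibniz. This produces a finite sum of products of factors $\eps\p^{\alpha_j+1}(\eps D_t)^{l_j}u$ ($1\le j\le n-1$) and $\p^{\alpha_n+1}(\eps D_t)^{l_n}f$ with $\alpha_1+\cdots+\alpha_n=3-k-l$.

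Next, for each resulting product I would apply a Moser-type estimate: place every factor except the one of highest Sobolev order in $L^\infty(\Om)$ using the embedding $H^2(\Om)\hookrightarrow L^\infty(\Om)$, and keep the remaining factor in $L^2(\Om)$. A direct bookkeeping of the indices shows that each factor corresponds to a norm $\|(\eps D_t)^{m'}g\|_{s'}$ (for $g\in\{u,f\}$) with $m'+2s'\le k+2l+2(3-k-l)+O(1)\le 8$; by the discussion of the index pairs realised by $E(t)$, each such norm is dominated by $\sqrt{E(t)}$, giving the pointwise bound $\|[\p,(\eps D_t)^{k+2l}]f\|_{3-k-l}^2\lesssim P(E(t))$.

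The hard part will be upgrading this pointwise-in-time bound to the stated form $P(E(0))+P(E(t))\int_0^t P(E(\tau))\dtau$; otherwise the estimate cannot be closed cleanly with the energy identity. For the single $L^2$-factor $g$ selected above I would invoke Lemma \ref{lem transport} to obtain $\|g(t)\|_0^2\lesssim\|g(0)\|_0^2+\int_0^t\|D_t g\|_0^2\dtau\lesssim P(E(0))+\int_0^t P(E(\tau))\dtau$. The key computation is
\[
D_t\!\left(\eps\p^{\alpha+1}(\eps D_t)^{l}u\right) \;=\; \p^{\alpha+1}(\eps D_t)^{l+1}u\;+\;\eps\,[D_t,\p^{\alpha+1}](\eps D_t)^{l}u,
\]
in which the $\eps^{-1}$ produced by $D_t=\eps^{-1}(\eps D_t)$ is exactly cancelled by the $\eps$ attached to $g$, so the extra material derivative costs no Mach weight; since $l\le k+2l-1\le 5$, the norm $\|(\eps D_t)^{l+1}u\|_{\alpha+1}$ still fits inside the energy budget. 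Combining this with the $L^\infty$ bounds on the remaining factors, and noting the trivial simplification for $f=S$ (where $D_t S=0$ kills every term with $l_n\ge 1$), yields the claim. Lower-order summands in the Lemma \ref{Prop_AGU} expansion (those with $n\ge 3$) carry strictly fewer derivatives per factor and are handled identically.
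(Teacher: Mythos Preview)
Your expansion via Lemma \ref{Prop_AGU} and the single application of the transport formula to the $\eps$-weighted $L^2$ factor are both correct and match the paper's approach. However, your final step --- ``combining this with the $L^\infty$ bounds on the remaining factors \ldots\ yields the claim'' --- has a genuine gap. After bounding the $L^2$ factor $g$ by $P(E(0))+\int_0^t P(E(\tau))\dtau$ and the remaining factors in $L^\infty$ by $P(E(t))$, the product gives
\[
P(E(t))\Bigl(P(E(0))+\int_0^t P(E(\tau))\dtau\Bigr)=P(E(t))P(E(0))+P(E(t))\int_0^t P(E(\tau))\dtau,
\]
and the cross term $P(E(t))P(E(0))$ is \emph{not} of the stated form. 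This matters: Lemma \ref{lem prod comm} is invoked in \eqref{divcurl E4u}--\eqref{divcurl E5B} and in Lemma \ref{lem div} outside any time integral, so a contribution $P(E(t))P(E(0))$ would appear directly on the right of the final energy inequality and obstruct the Gr\"onwall/bootstrap closure in Section \ref{sect uniform}.

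The paper resolves this by noting that the first transport application actually leaves an explicit $\eps^2$ on the initial-data term (coming from the extra $\eps$ attached to one factor). For the leading monomial $(\eps\p(\eps D_t)^5 u)(\p u)$, for instance, one arrives at $\eps^2\|(\eps D_t)^5 u(0)\|_1^2\,\|\p u\|_2^2$. One then applies Young's inequality to split this as $\|(\eps D_t)^5 u(0)\|_1^4+\|\eps\p u\|_2^4$, thereby \emph{transferring} the $\eps$ weight onto the low-order factor, and applies the transport formula a \emph{second} time to $\|\eps u\|_3^2$ (now without losing a Mach weight, since $D_t(\eps u)=\eps D_t u$ lies in the energy), followed by Jensen's inequality to handle the square of the resulting time integral. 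This ``transport + Young + second transport + Jensen'' sequence is the missing ingredient in your proposal; your single transport step does not suffice.
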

			\begin{proof}
			Here we only present the proof for the most difficult case: $l=3, k=0$ because the number of total terms in this commutator is greater than any others. Also, we assume $f=u$ without loss of generality. In view of Lemma \ref{Prop_AGU}, we can see that $[\p, (\eps D_t)^6]f$ contains the following terms with certain coefficients
			\begin{itemize}
			\item Quadratic: $(\eps \p (\eps D_t)^5 u)(\p u)$, $(\eps \p (\eps D_t)^4 u)(\p(\eps D_t) u)$, $(\eps \p (\eps D_t)^3 u)(\p(\eps D_t)^2 u)$.
			\item Cubic: $(\eps \p (\eps D_t)^4 u)(\eps \p u)(\p u)$, $(\eps \p (\eps D_t)^3 u)(\eps\p (\eps D_t)u)(\p u)$, $(\eps \p (\eps D_t)^2 u)(\eps\p (\eps D_t)^2 u)(\p u)$,\\ $(\eps \p (\eps D_t)^2 u)(\eps\p (\eps D_t)u)(\p (\eps D_t) u)$.
			\item Quartic and more low-order terms.
			\end{itemize}
			We notice that, in each monomial, there is exactly one term that does not have an extra $\eps$ weight, and we always set the lowest-order factor to be this term. Now we do the product estimates.
			
			The leading-order part is $(\eps \p (\eps D_t)^5 u)(\p u)$. Using Lemma \ref{lem transport}, we get
			\begin{align*}
			&\|(\eps \p (\eps D_t)^5 u)(\p u)\|_0^2 \leq \|\eps^6 D_t^5 u\|_1^2 \|\p u\|_\infty^2\\
			\lesssim&~\eps^2\|(\eps D_t)^5 u(0,\cdot)\|_1^2\|\p u\|_2^2 + \underbrace{\|\p u\|_2^2\int_0^t \|(\eps D_t)^6 u(\tau,\cdot)\|_1^2 \dtau}_{\leq E_4(t)\int_0^t P(E_7(\tau))\dtau},\\
			\lesssim&~\|\eps \p u\|_2^4 + \|(\eps D_t)^5 u(0,\cdot)\|_1^4 + E_4(t)\int_0^t P(E_7(\tau))\dtau
			\end{align*} Next, we apply Lemma \ref{lem transport} again to get
			\begin{align*}
			\|\eps \p u(t,\cdot)\|_2^4 \leq \|\eps u(t,\cdot)\|_3^4 \lesssim \left(\|\eps u_0\|_3^2 + \int_0^t \|\eps D_t u(\tau,\cdot)\|_3^2\dtau \right)^2
			\end{align*}Then using Jensen's inequality, we have
			\[
			\|\eps \p u(t,\cdot)\|_2^4\leq P(E_4(0))+\int_0^t P(E_4(\tau))\dtau.
			\]
			The other terms can also be controlled in the same way, as the control of $(\eps \p (\eps D_t)^m u)$ for $m\geq 3$ must generate an $\eps^2$ weight and we can put this weight into the term that does not have extra $\eps$ weight. What becomes different is that, the lowest-order term in the monomials might be $(\p(\eps D_t)^ju)$ for $j=0,1,2$. When $j=1,2$, we shall replace $L^\infty$ norm by $L^6$ norm and use $H^1\hookrightarrow L^6$. We take $(\eps \p (\eps D_t)^3 u)(\p(\eps D_t)^2 u)$ for an example. We have
			\begin{align*}
			&\|(\eps \p (\eps D_t)^3 u)(\p(\eps D_t)^2 u)\|_0^2\leq \|\eps^4\p D_t^3 u\|_{L^3}^2 \|\p (\eps D_t)^2 u\|_{L^6}^2 \leq \|\eps^4 D_t^3 u\|_2^2 \|(\eps D_t)^2 u\|_2^2\\
			\lesssim&~\eps^2\|(\eps D_t)^3 u(0,\cdot)\|_2^2\|(\eps D_t)^2 u\|_2^2 + \underbrace{\|(\eps D_t)^2 u\|_2^2\int_0^t \|(\eps D_t)^4 u(\tau,\cdot)\|_2^2 \dtau}_{\leq E_4(t)\int_0^t P(E_6(\tau))\dtau},\\
			\lesssim&~\|\eps (\eps D_t)^2 u\|_2^4 + \|(\eps D_t)^3 u(0,\cdot)\|_0^4 + E_4(t)\int_0^t P(E_6(\tau))\dtau
			\end{align*} and then apply the same argument to get
			\[
			\|\eps (\eps D_t)^2 u\|_2^4\leq P(E_4(0))+\int_0^t P(E_5(\tau))\dtau.
			\]
			We can also apply such method to all the other terms in the commutators. Therefore, we conclude that 					\begin{align}
			\left\|[\p, (\eps D_t)^{k+2l}]f\right\|_{3-k-l}^2\leq P(E(0)) + P(E(t))\int_0^t P(E(\tau))\dtau,
			\end{align}
			for $f=u,B,q,S$.
			\end{proof}
			
			\begin{lem}\label{lem Dt comm}
			Given integers $k,l$ satisfying $0\leq l\leq 3$ and $0\leq k\leq 3-l$, under the setting of Theorem \ref{main thm, ill data}, we have that
			\begin{align}
			\left\|[(\eps D_t)^{k+2l},g]f\right\|_{3-k-l}^2\leq P(E(0)) + P(E(t))\int_0^t P(E(\tau))\dtau,
			\end{align}
			for $(f,g)=(\eps D_t q, a), (\eps D_t u, \rho), (\eps(\nab\times B), B)$.
			\end{lem}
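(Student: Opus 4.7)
\emph{Proof Proposal.} The plan is to mimic the approach used for Lemma \ref{lem prod comm}: expand each commutator via Leibniz rule,
\[
[(\eps D_t)^{k+2l},g]f = \sum_{j=1}^{k+2l}\binom{k+2l}{j}\big((\eps D_t)^j g\big)\big((\eps D_t)^{k+2l-j}f\big),
\]
and control each summand in $H^{3-k-l}(\Om)$ by combining Moser product inequalities with Lemma \ref{lem transport} and the uniform bound on $E(t)$.

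For $(f,g) = (\eps D_t q, a)$ and $(\eps D_t u, \rho)$ the key structural observation is that $a = a(\eps q, S)$ and $\rho = \rho(\eps q, S)$ with $D_t S = 0$, so
\[
D_t a = \p_q a \cdot \eps D_t q, \qquad D_t\rho = \p_q\rho \cdot \eps D_t q.
\]
Thus each material derivative falling on $a$ or $\rho$ yields one extra $\eps$ factor, and by induction $(\eps D_t)^j a$ is a polynomial in $\{(\eps D_t)^i q\}_{i\le j}$ with smooth coefficients in $(\eps q, S)$, every monomial of which carries at least one additional power of $\eps$. Because $j\ge 1$ in the commutator sum, the largest $D_t$-count appearing on a $q$-factor generated by differentiating $a$ or $\rho$ is at most $k+2l\le 6$, which is bounded by $E(t)$; the companion factor $(\eps D_t)^{k+2l+1-j}q$ or $(\eps D_t)^{k+2l+1-j}u$ involves at most $k+2l+1\le 7$ material derivatives, again within the range covered by $E(t)$. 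Placing the lowest-order factor in $L^\infty$ or $L^6$ (via Sobolev embedding) and the highest-order factor in the appropriate $H^s$ then yields each summand bounded by $P(E(t))$, and Lemma \ref{lem transport} converts any potentially unpaired top-order factor into initial data plus a time integral of $E$, exactly as in the proof of Lemma \ref{lem prod comm}.

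For $(f,g) = (\eps(\nabla\times B), B)$ the argument is slightly different since $B$ does not depend smoothly on $(\eps q,S)$. Here the role of the ``extra $\eps$'' is played by the $\eps$ already present in the target $\eps(\nabla\times B)$. I would move $\nabla\times$ past the material derivatives via Lemma \ref{Prop_AGU},
\[
(\eps D_t)^{k+2l-j}\eps(\nabla\times B) = \eps\,\nabla\times\!\big((\eps D_t)^{k+2l-j}B\big) + \eps\,[\nabla\times,(\eps D_t)^{k+2l-j}]B,
\]
the new commutator being handled by Lemma \ref{lem prod comm}. The surviving product of $\eps\,\nabla\times((\eps D_t)^{k+2l-j}B)$ with $(\eps D_t)^j B$ (with $j\ge 1$) is then a product of factors in the range of $E(t)$, and is estimated by the same product/transport combination.

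The main obstacle is purely combinatorial bookkeeping: one must verify that for every splitting $j\in\{1,\dots,k+2l\}$ of the derivative order, the two factors can be placed in an $H^s$ pair compatible with Moser's inequality at level $s = 3-k-l$, and that the collected $\eps$-weights coming from $D_t a$, $D_t\rho$, or the explicit $\eps$ in $\eps(\nabla\times B)$ exactly match what is supplied by $E(t)$. No conceptually new difficulty beyond Lemma \ref{lem prod comm} should arise, which is presumably why the authors record this lemma without a separate proof at the end of the appendix.
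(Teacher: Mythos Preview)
Your proposal is correct and follows essentially the same strategy as the paper: Leibniz-expand the commutator, exploit $D_tS=0$ so that each $D_t$ hitting $a$ or $\rho$ produces an extra $\eps$, and then combine product estimates with the transport Lemma~\ref{lem transport} to produce the initial-data-plus-time-integral form. One minor difference: for the pair $(f,g)=(\eps(\nabla\times B),B)$ the paper does \emph{not} commute $\nabla\times$ past the material derivatives via Lemma~\ref{Prop_AGU}; it simply treats $(\eps D_t)^{k+2l-j}(\nabla\times B)$ as a single scalar-valued function and applies Lemma~\ref{lem transport} directly to it, e.g.\ $\|\eps^6D_t^5\nabla\times B\|_0^2\lesssim \eps^2\|\eps^5D_t^5\nabla\times B(0,\cdot)\|_0^2+\int_0^t\|(\eps D_t)^6\nabla\times B(\tau,\cdot)\|_0^2\,\dtau$. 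Your route (commute the curl out first, then invoke Lemma~\ref{lem prod comm} on the resulting $[\nabla\times,(\eps D_t)^m]B$) also works but introduces an unnecessary extra commutator to track. Also, your closing remark is slightly off: the paper does supply a short proof of this lemma, treating only the extremal case $k=0$, $l=3$ for $(f,g)=(\eps(\nabla\times B),B)$.
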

			\begin{proof}
			We only consider the estimates for $\left\|[(\eps D_t)^{k+2l},\eps B\times](\nab\times B)\right\|_{3-k-l}^2$ with $k=0, l=3$, which is also the most difficult case, in which it contains the highest number of material derivatives and $\nab\times B$ is a normal derivative instead of a tangential derivative. 
			
			In $[(\eps D_t)^6, \eps B\times](\nab\times B)$, the leading-order part gives us two terms (with certain coefficients)
			\[
			(\eps^6 D_t^6 B)\times(\eps\nab\times B)\text{ and }(\eps D_t B) \times (\eps^6D_t^5\nab\times B).
			\]
			These terms are controlled in the same way as in Lemma \ref{lem prod comm}. For example, the first term is controlled by
			\begin{align*}
			\|(\eps^6 D_t^6 B)\times(\eps\nab\times B)\|_0^2\lesssim \|\eps^6 D_t^6 B\|_0^2 \|\eps B\|_3^2,
			\end{align*}where $\|\eps^6 D_t^6 B\|_0^2$ has been controlled in Proposition \ref{Lem_TangentialEstimates} and Lemma \ref{lem transport} gives
			\[
			\|\eps B\|_3^2\lesssim \|\eps B_0\|_3^2 + \int_0^t \|\eps D_t B(\tau,\cdot)\|_3^2\dtau.
			\] The second term is controlled in a similar manner
			\begin{align*}
			&\left\|(\eps D_t B) \times (\eps^6D_t^5\nab\times B)\right\|_0^2\lesssim\|\eps D_t B\|_2^2\|\eps^6D_t^5\nab\times B\|_0^2\\
			\lesssim&~\|\eps D_t B\|_2^2\left(\eps^2\|\eps^5D_t^5\nab\times B(0,\cdot)\|_0^2+\int_0^t\|(\eps D_t)^6\nab\times B(\tau,\cdot)\|_0^2\dtau\right)\\
			\lesssim&~\|\eps^2 D_t B\|_2^4+\|\eps^5D_t^5\nab\times B(0,\cdot)\|_0^4+\underbrace{\|\eps D_t B\|_2^2\int_0^t\|(\eps D_t)^6\nab\times B(\tau,\cdot)\|_0^2\dtau}_{\leq E_4(t)\int_0^t P(E_7(\tau))\dtau}\\
			\lesssim&~P(E_4(0))+\int_0^t P(E_4(\tau))\dtau + P(E_6(0)) + E_4(t)\int_0^t P(E_7(\tau))\dtau.
			\end{align*}
			The other terms can be controlled in the same way and we skip the details.
			\end{proof}

		\end{appendix}

	\end{document}